\definecolor{webgreen}{rgb}{0,.5,0}
\definecolor{webbrown}{rgb}{.6,0,0}
\DeclareMathOperator{\Li}{Li}
\begin{document}

\theoremstyle{plain}
\newtheorem{theorem}{Theorem}
\newtheorem{corollary}[theorem]{Corollary}
\newtheorem{lemma}{Lemma}
\newtheorem{example}{Example}
\newtheorem{remark}{Remark}

\begin{center}
\vskip 1cm{\LARGE\bf 
Combinatorial sums, series and integrals involving odd harmonic numbers \\
}
\vskip 1.cm
{\large
Kunle Adegoke \\
Department of Physics and Engineering Physics \\ Obafemi Awolowo University, Ile-Ife\\Nigeria \\
\href{mailto:adegoke00@gmail.com}{\tt adegoke00@gmail.com}

\vskip 0.22 in

Robert Frontczak \\
Independent Researcher\\ Reutlingen \\ Germany \\
\href{mailto:robert.frontczak@web.de}{\tt robert.frontczak@web.de}

\vskip 0.22 in

Taras Goy  \\
Faculty of Mathematics and Computer Science\\
Vasyl Stefanyk Precarpathian National University, Ivano-Frankivsk \\Ukraine\\
\href{mailto:taras.goy@pnu.edu.ua}{\tt taras.goy@pnu.edu.ua}}
\end{center}

\vskip .22 in

\begin{abstract}
We present several types of ordinary generating functions involving central binomial coefficients, harmonic numbers, and odd harmonic numbers. Our results complement those of Boyadzhiev from 2012 and Chen from 2016. Based on these generating functions we evaluate several infinite series in closed form. In addition,   we offer some combinatorial sum identities involving Catalan numbers, harmonic numbers and odd harmonic numbers. Finally, we analyze a special log-integral with Fibonacci numbers and odd harmonic numbers.
\vskip .1 in
{\noindent\emph{Keywords:} Harmonic number, odd harmonic number, central binomial coefficient, generating function, Fibonacci number, Catalan number.
} 
\vskip .1 in

\noindent 2020 {\it Mathematics Subject Classification}: Primary 33B15; Secondary 11B39.
\end{abstract}

\section{Introduction and motivation}

Let $\Gamma(z)$ be the familiar gamma function given by 
$\Gamma(z) = \int\limits_0^\infty e^{-t} t^{z-1} dt$, $\Re(z)>0$. 
The digamma function $\psi(z)$ is defined for all $z\in\mathbb{C}\setminus \{0,-1,-2,\ldots \}$ by
\begin{equation*}
\psi(z) = 
\frac{\Gamma'(z)}{\Gamma(z)}
= - \gamma - \frac{1}{z} + \sum_{k=1}^\infty \Big (\frac{1}{k}-\frac{1}{k+z}\Big ),
\end{equation*}
with $\gamma$ being the Euler--Mascheroni constant
$\gamma = \lim\limits_{n\rightarrow \infty}\Big ( \sum\limits_{k=1}^n\frac{1}{k} - \ln n\Big) = 0,577215\ldots.$

We use the notation $O_n$ for the $n$th odd harmonic number,
$O_n = \sum\limits_{j=1}^n \frac{1}{2j-1}$, $O_0 = 0$, by analogy with the usual notation $H_n$ for the classical harmonic numbers
$H_n = \sum\limits_{j=1}^n \frac{1}{j}$, $H_0 = 0$.
Since
\begin{equation*}
H_n  = \sum_{j = 1}^n {\frac{1}{j}} = \sum_{j = 1}^{\left\lfloor {n/2} \right\rfloor } {\frac{1}{{2j}}} + \sum_{j = 1}^{\left\lceil {n/2} \right\rceil } {\frac{1}{{2j - 1}}} = \frac{1}{2}H_{\left\lfloor {n/2} \right\rfloor } + O_{\left\lceil {n/2} \right\rceil },
\end{equation*}
odd harmonic numbers and harmonic numbers are related by
$H_{2n} = \frac{1}{2} H_n + O_n$ and
\begin{equation}\label{eq.lreuv6s}
H_{2n - 1} = \frac{1}{2}H_{n - 1} + O_n.
\end{equation}

Let $F_n$ denote the $n$-th Fibonacci and $L_n$ the $n$-th Lucas number, both satisfying the recurrence
$u_n = u_{n-1}+u_{n-2}$, $n\geq2$,	
but with respective initial conditions 
$F_0=0$, $F_1=1$ and
$L_0=2$, $L_1=1$. For negative subscripts we have 
$F_{-n}=(-1)^{n-1}F_n$ and $L_{-n}=(-1)^n L_n$. 

The Binet formulas for these numbers state that
\begin{equation}\label{bine}
F_n = \frac{\alpha^n - \beta^n }{\alpha - \beta },\quad L_n = \alpha^n + \beta^n,\quad n\in\mathbb Z,
\end{equation}
where $\alpha=(1 + \sqrt 5)/2$ is the golden ratio and $\beta=-1/\alpha$.

For further information
on Fibonacci and Lucas numbers, we refer the reader to entries  A000045 and A000032, respectively, in the On-Line Encyclopedia of Integer Sequences \cite{OEIS}.

Let $(G_j(a,b))_{j\in\mathbb Z}$ be the \textit{gibonacci sequence} having the same recurrence relation as the Fibonacci and Lucas sequences but starting with arbitrary initial values; that is, let $G_0  = a$, $G_1  = b$, $G_j(a,b)  = G_{j - 1} (a,b) + G_{j - 2}(a,b)$, $j \ge 2$, where $a$ and $b$ are arbitrary numbers (usually integers) not both zero. Thus, $F_j=G_j(0,1)$ and $L_j=G_j(2,1)$. For brevity, we will  write $G_j$ for $G_j(a,b)$. Extension to negative subscripts is provided by writing the recurrence  as $G_{-j}=G_{-(j - 2)} - G_{-(j - 1)}$; so that the gibonacci sequence is defined for all integers. The gibonacci numbers can be accessed directly through the Binet-like formula
\begin{equation}\label{binet-gibo}
G_j  = \frac{{(b - a\beta )\alpha ^j  + (a\alpha  - b)\beta^j }}{{\alpha  - \beta }}.
\end{equation}

Finally, central binomial coefficients and Catalan numbers $C_n$ are defined for all $n\geq 0$ by $\binom {2n}{n} = \frac{(2n)!}{(n!)^2}$ and $C_n = \frac{1}{n+1} \binom {2n}{n}$.

This paper is motivated by the papers by Boyadzhiev from 2012 \cite{Boyadzhiev} and Chen from 2016 \cite{Chen}. Boyadzhiev studied generating function involving central binomial coefficients and harmonic numbers $H_n$. His main results are \cite[Theorem 1]{Boyadzhiev}
\begin{align}\label{eq.royfy2s}
\sum_{n=0}^\infty \binom {2n}{n} H_n x^n &= \frac{2}{\sqrt{1-4x}} \ln\Big (\frac{1+\sqrt{1-4x}}{2\sqrt{1-4x}}\Big ),\quad x\in\Big[\!-\frac14;\frac14\Big),\\
\sum_{n=0}^\infty (-1)^{n+1}\binom {2n}{n}  H_n x^n &= \frac{2}{\sqrt{1+4x}} \ln\Big (\frac{2\sqrt{1+4x}}{1+\sqrt{1+4x}}\Big ),\quad  x\in\Big(\!-\frac14;\frac14\Big].\nonumber
\end{align}

Chen added results for the sequences $\binom {2n}{n}(H_{2n} - H_n)$, $C_n (H_{2n} - H_n)$ and $\binom{2n}{n}O_n$,
and proved among others \cite[Theorem 8]{Chen}
\begin{equation}\label{chen_id}
\sum_{n=0}^\infty \binom{2n}{n} O_n x^n = - \frac{\ln\sqrt{1-4x}}{\sqrt{1-4x}},\quad x\in\Big[\!-\frac14;\frac14\Big).
\end{equation}

Here we derive generating functions for several related sequences, starting with a different proof of Chen's identity \eqref{chen_id} using a special family of log-integrals. As a consequence of \eqref{chen_id} we state and prove interesting combinatorial sum identities involving Catalan numbers, harmonic numbers, and odd harmonic numbers. Proceeding further we offer generating functions for
$\sum\limits_{n=0}^\infty \binom {2n}{n} H_{2n+1} x^n$, $\sum\limits_{n=0}^\infty \binom {2n}{n} \frac{O_n}{n+m+1} x^n$ with $m\geq 0$, and others. Our results are also closely related to other articles, among others we mention Lehmer \cite{Lehmer},
Chu and Zheng \cite{Chu}, Chen \cite{Chen2}, Furdui and S\^int\u am\u arian \cite{Furdui}, and Stewart \cite{Stewart}, who found various ordinary generating functions for sequences involving products between harmonic numbers and Fibonacci (Lucas) numbers.
In the final section, we provide an analysis of a special log-integral with Fibonacci numbers and odd harmonic numbers.

\section{An integral and some consequences}

The next lemma will be crucial for several parts of this paper.
\begin{lemma}\label{fund_lem}
	For all real $a>0$ and integers $n\geq 0$,
	\begin{equation}\label{lem_eq}
	\int_0^\infty \frac{\ln x}{(a^2 + x^2)^{n+1}}\, dx = \pi\binom {2n}{n} \frac{\ln a - O_n}{(2a)^{2n+1}}.
	\end{equation}
\end{lemma}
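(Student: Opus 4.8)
The plan is to reduce the integral to a normalized form, split off the $\ln a$ term, and then recognize the two resulting integrals as a Beta-function evaluation and the derivative of a parametric Beta integral, from which the odd harmonic number will emerge through the digamma function. First I would substitute $x = at$, so that $dx = a\,dt$ and $\ln x = \ln a + \ln t$. This factors out the dependence on $a$ and gives
\begin{equation*}
\int_0^\infty \frac{\ln x}{(a^2+x^2)^{n+1}}\,dx = \frac{\ln a}{a^{2n+1}}\int_0^\infty \frac{dt}{(1+t^2)^{n+1}} + \frac{1}{a^{2n+1}}\int_0^\infty \frac{\ln t}{(1+t^2)^{n+1}}\,dt.
\end{equation*}
Writing $I_n$ and $J_n$ for the two $t$-integrals and using $(2a)^{2n+1} = 2^{2n+1}a^{2n+1}$, it suffices to show $I_n = \pi\binom{2n}{n}2^{-(2n+1)}$ and $J_n = -\pi\binom{2n}{n}O_n\,2^{-(2n+1)}$.

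Both integrals are special cases of the parametric integral $F(s) = \int_0^\infty t^{s-1}(1+t^2)^{-(n+1)}\,dt$. Substituting $u = t^2$ turns this into a Beta integral, giving $F(s) = \tfrac12 B(s/2,\, n+1-s/2) = \tfrac{1}{2\Gamma(n+1)}\Gamma(s/2)\Gamma(n+1-s/2)$, valid for $0 < s < 2n+2$, a range comfortably containing a neighborhood of $s=1$. Then $I_n = F(1)$, which using $\Gamma(1/2)=\sqrt\pi$ and $\Gamma(n+1/2) = \sqrt\pi\,(2n)!/(4^n n!)$ evaluates to $\pi\binom{2n}{n}2^{-(2n+1)}$ as required.

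The main work is $J_n$. Observing that $\partial_s t^{s-1} = t^{s-1}\ln t$, differentiation under the integral sign (justified by the uniform convergence on the parameter range above) gives $J_n = F'(1)$. Logarithmic differentiation of the Gamma-product form yields $F'(s) = F(s)\cdot\tfrac12\bigl[\psi(s/2)-\psi(n+1-s/2)\bigr]$, so $J_n = I_n\cdot\tfrac12\bigl[\psi(1/2)-\psi(n+1/2)\bigr]$. The crucial identities $\psi(1/2) = -\gamma - 2\ln 2$ and $\psi(n+1/2) = -\gamma - 2\ln 2 + 2O_n$ then collapse the bracket to $-2O_n$, whence $J_n = -I_n O_n$. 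Substituting $I_n$ and $J_n$ back into the split integral and collecting the factor $(2a)^{-(2n+1)}$ delivers the claimed formula.

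I expect the only genuine obstacle to be the evaluation of the difference of digamma values at the half-integer arguments: the whole mechanism hinges on $\psi(n+1/2) - \psi(1/2) = 2O_n$, which is precisely where the odd harmonic number enters. Everything else — the substitution, the Beta-integral evaluation of $I_n$, and the passage to $F'(1)$ — is routine once the parametric integral $F(s)$ is in hand. An alternative route by induction on $n$ via integration by parts is conceivable, but the parametric-differentiation argument is cleaner and makes the origin of $O_n$ transparent.
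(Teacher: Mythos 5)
Your proof is correct, but it takes a genuinely different route from the paper's. The paper's proof is essentially a citation: it quotes Entry 4.231\,(7) of Gradshteyn and Ryzhik, which gives $\int_0^\infty \ln x\,(a^2+x^2)^{-n}\,dx$ in terms of $\Gamma\big(n-\tfrac12\big)$ and $\psi\big(n-\tfrac12\big)$, shifts $n\to n+1$, and then simplifies using the half-integer evaluations $\Gamma\big(n+\tfrac12\big)=\sqrt{\pi}\,(2n)!/(4^n n!)$ and $\psi\big(n+\tfrac12\big)=-\gamma-2\ln 2+2O_n$. You instead derive everything from scratch: the scaling $x=at$ strips out the dependence on $a$, the substitution $u=t^2$ identifies the parametric integral $F(s)$ as a Beta function, and differentiation under the integral sign at $s=1$ produces the $\ln t$ factor, with the difference $\psi\big(\tfrac12\big)-\psi\big(n+\tfrac12\big)=-2O_n$ delivering the odd harmonic number. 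In effect you have proved (the case needed of) the Gradshteyn--Ryzhik entry rather than quoted it; note that both arguments funnel through exactly the same half-integer digamma identity, which is the sole point where $O_n$ enters in either version. What each approach buys: yours is self-contained and makes the origin of $O_n$ transparent, at the cost of setting up the Mellin/Beta machinery and the (routine, but worth stating via a dominating function on a compact $s$-interval around $1$) justification of differentiating under the integral; the paper's is three lines long because it outsources all of that to a table.
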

\begin{proof}
	Entry 4.231\,(7) in Gradshteyn and Ryzhik \cite{GrRy07} states that for $a>0$
	\begin{equation*}
	\int_0^\infty \frac{\ln x}{(a^2 + x^2)^{n}}\, dx
	= \frac{\sqrt{\pi}\,\Gamma\big(n-\frac{1}{2}\big)}{4 (n-1)!\, a^{2n-1}} \left( 2\ln \Big (\frac{a}{2}\Big ) - \gamma - \psi\Big(n-\frac{1}{2}\Big )\right).
	\end{equation*}
	Replace $n$ by $n+1$ and use
	$\Gamma\big(n+\frac{1}{2}\big) = \frac{(2n)!\sqrt{\pi}}{4^n n!}$ and $\psi\big (n+\frac{1}{2}\big)= - \gamma - 2\ln 2 + 2\,O_n$.
\end{proof}
\begin{lemma}\label{lem2}
	For all $a\geq1$, 
	\begin{equation}\label{2.1}
	\sum_{n=0}^\infty (-1)^n \binom {2n}{n} \frac{\ln a - O_n}{(2a)^{2n}}  = \frac{a\ln(a^2+1)}{2\sqrt{a^2+1}},
	\end{equation}
	and, for all $a>1$,
	\begin{equation}
	\label{2.2}
	\sum_{n=0}^\infty \binom {2n}{n} \frac{\ln a - O_n}{(2a)^{2n}}  = \frac{a\ln(a^2-1)}{2\sqrt{a^2-1}}.
	\end{equation}
\end{lemma}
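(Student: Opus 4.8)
The plan is to use Lemma~\ref{fund_lem} as the engine and sum its right-hand side over $n$ after dividing by a suitable power, thereby matching the series in~\eqref{2.1} and~\eqref{2.2} to a single closed-form integral. Concretely, I would start from~\eqref{lem_eq} and observe that
\[
\binom{2n}{n}\frac{\ln a - O_n}{(2a)^{2n}} = \frac{2a}{\pi}\int_0^\infty \frac{\ln x}{(a^2+x^2)^{n+1}}\,dx.
\]
Multiplying by $(-1)^n$ (for~\eqref{2.1}) or by $1$ (for~\eqref{2.2}) and summing over $n\ge 0$, I would interchange sum and integral to obtain
\[
\sum_{n=0}^\infty (\pm 1)^n \binom{2n}{n}\frac{\ln a - O_n}{(2a)^{2n}} = \frac{2a}{\pi}\int_0^\infty \ln x \sum_{n=0}^\infty \frac{(\pm 1)^n}{(a^2+x^2)^{n+1}}\,dx.
\]
The inner geometric series sums to $\frac{1}{(a^2+x^2)\mp 1} = \frac{1}{a^2+x^2 \mp 1}$, so the problem reduces to evaluating $\int_0^\infty \frac{\ln x}{x^2 + (a^2\mp 1)}\,dx$.

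The key computational step is then the standard integral $\int_0^\infty \frac{\ln x}{x^2+b^2}\,dx = \frac{\pi \ln b}{2b}$ for $b>0$, which follows from~\eqref{lem_eq} with $n=0$ upon writing $b$ in place of $a$. For~\eqref{2.1} I take $b^2 = a^2+1$, giving $\frac{2a}{\pi}\cdot\frac{\pi \ln\sqrt{a^2+1}}{2\sqrt{a^2+1}} = \frac{a\ln(a^2+1)}{2\sqrt{a^2+1}}$, which is exactly the claimed right-hand side. For~\eqref{2.2} I take $b^2 = a^2-1$, which requires $a>1$ so that $b$ is real and positive, and this yields $\frac{a\ln(a^2-1)}{2\sqrt{a^2-1}}$ as desired; the boundary case $a=1$ is excluded precisely because $b=0$ makes the integral diverge, explaining the differing hypotheses $a\ge 1$ versus $a>1$ in the two parts.

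The main obstacle I anticipate is justifying the interchange of summation and integration, together with the convergence of the original series. The series converges because $\binom{2n}{n}\sim 4^n/\sqrt{\pi n}$, so the general term behaves like $(2a)^{-2n}\cdot 4^n\cdot(\text{log factor}) = a^{-2n}n^{-1/2}\ln(\cdots)$, which decays geometrically for $a>1$ and is handled at $a=1$ in~\eqref{2.1} by the alternating sign. For the interchange I would invoke dominated convergence or Tonelli's theorem on the tail, noting that for fixed $a$ the partial geometric sums are dominated by an integrable majorant on $(0,\infty)$; some care is needed near $x=0$ and $x=\infty$ where $\ln x$ is unbounded, but the factor $(a^2+x^2)^{-(n+1)}$ decays fast enough that $\frac{\ln x}{a^2+x^2\mp 1}$ remains absolutely integrable once $a^2\mp 1>0$. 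Verifying that the boundary value $a=1$ in~\eqref{2.1} survives the limit (where the even-indexed partial sums of the geometric series require the alternating structure) is the one spot demanding a careful limiting argument rather than a routine estimate.
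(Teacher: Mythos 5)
Your proposal is correct and takes essentially the same route as the paper: both proofs insert the integral representation from Lemma~\ref{fund_lem}, sum the geometric series $\sum_{n\ge 0}(\pm1)^n(a^2+x^2)^{-(n+1)}$ under the integral sign, and finish with the $n=0$ case $\int_0^\infty \frac{\ln x}{x^2+b^2}\,dx=\frac{\pi\ln b}{2b}$ applied with $b^2=a^2\pm1$. The paper's proof is simply a terser version of yours; your extra attention to the sum--integral interchange and to the boundary case $a=1$ only makes the argument more complete.
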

\begin{proof}
	Use \eqref{lem_eq} and sum from $n=0$ to $\infty$ using the geometric series. Note that
	$\int\limits_0^\infty \frac{\ln x}{a^2 + x^2} dx = \frac{\pi}{2a} \ln a$.
\end{proof}

Special cases of \eqref{2.1} and \eqref{2.2} are
\begin{gather*}
\sum_{n=0}^\infty (-1)^{n} \binom {2n}{n} \frac{O_n}{4^{n}}  = - \frac{\ln 2}{2\sqrt{2}}, \qquad
\sum_{n=0}^\infty (-1)^{n} \binom {2n}{n} \frac{\ln 2 - O_n}{16^{n}}  = \frac{\ln 5}{\sqrt{5}},\\
\sum_{n=0}^\infty (-1)^{n} \binom {2n}{n} \frac{\ln\alpha - O_n}{(2\alpha)^{2n}}
= \frac{\alpha\ln(\sqrt{5}\alpha)}{2\sqrt{2+\alpha}},\qquad \sum_{n=0}^\infty (-1)^{n} \binom {2n}{n} \frac{\ln\alpha -2 O_n}{(4\alpha)^{n}}
= \frac{2\ln\alpha}{\sqrt{\alpha}},
\end{gather*}
and
\begin{gather*}
\sum_{n=0}^\infty \binom {2n}{n} \frac{\ln\alpha - O_n}{(2\alpha)^{2n}} = \frac{\sqrt{\alpha}}{2} \ln\alpha,\qquad \sum_{n=0}^\infty \binom {2n}{n} \frac{\ln2 - O_n}{16^{n}} = \frac{\ln3}{\sqrt3},\\
\sum_{n=0}^\infty \binom {2n}{n} \frac{\ln(2\alpha) - 2O_n}{(8\alpha)^{n}} = \frac{\sqrt{2\alpha}\ln5}{2\sqrt[4]5}.
\end{gather*}

Our first theorem rediscovers Chen's generating function \eqref{chen_id} \cite[Theorem 8]{Chen}.
\begin{theorem}
	We have
	\begin{align}\label{main_id1}
	\sum_{n=0}^\infty \binom {2n}{n} O_n x^n &=- \frac{\ln(1-4x)}{2\sqrt{1-4x}} ,\quad x\in\Big[\!-\frac14;\frac14\Big),\\
	\label{main_id2}
	\sum_{n=0}^\infty (-1)^{n} \binom {2n}{n}  O_n x^n &=- \frac{\ln(1+4x)}{2\sqrt{1+4x}},\quad x\in\Big(\!-\frac14;\frac14\Big].\nonumber
	\end{align}
\end{theorem}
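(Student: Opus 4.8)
The plan is to extract the generating function directly from Lemma~\ref{lem2}, so that the analytic work of Lemma~\ref{fund_lem} is reused rather than repeated. First I would take equation \eqref{2.2}, valid for $a>1$, and split the summand along $\ln a - O_n$:
\[
\ln a \sum_{n=0}^\infty \binom{2n}{n}\frac{1}{(2a)^{2n}} - \sum_{n=0}^\infty \binom{2n}{n}\frac{O_n}{(2a)^{2n}} = \frac{a\ln(a^2-1)}{2\sqrt{a^2-1}}.
\]
The first sum is a pure central-binomial series: invoking the well-known generating function $\sum_{n\ge0}\binom{2n}{n}y^n=(1-4y)^{-1/2}$ with $y=1/(4a^2)$ gives $\sum_{n\ge0}\binom{2n}{n}(2a)^{-2n}=a/\sqrt{a^2-1}$ for $a>1$, so the $\ln a$ term contributes a clean multiple of $a/\sqrt{a^2-1}$.

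Next I would solve for the remaining sum and substitute $x=1/(4a^2)$, so that $a>1$ corresponds to $x\in(0,\tfrac14)$ and $(2a)^{-2n}$ becomes $x^n$. Under this substitution $a/\sqrt{a^2-1}=1/\sqrt{1-4x}$, while $\ln a=-\tfrac12\ln(4x)$ and $\ln(a^2-1)=\ln(1-4x)-\ln(4x)$. The $\ln(4x)$ contributions cancel, leaving precisely $\sum_{n\ge0}\binom{2n}{n}O_nx^n=-\ln(1-4x)/(2\sqrt{1-4x})$, which establishes \eqref{main_id1} on the open interval $(0,\tfrac14)$.

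To reach the full stated range I would argue by analyticity. Since $\binom{2n}{n}O_n\sim 4^n\ln n/(2\sqrt{\pi n})$, the series $\sum_{n\ge0}\binom{2n}{n}O_nx^n$ has radius of convergence $\tfrac14$ and hence defines an analytic function on $(-\tfrac14,\tfrac14)$ that agrees with the analytic right-hand side on the subinterval $(0,\tfrac14)$; by the identity theorem the two coincide on all of $(-\tfrac14,\tfrac14)$. At the endpoint $x=-\tfrac14$ the series becomes alternating with terms decreasing to $0$, so it converges, and Abel's theorem matches it to the finite boundary value of the right-hand side. Finally, \eqref{main_id2} follows immediately by replacing $x$ with $-x$ in \eqref{main_id1} (equivalently, by running the same computation from \eqref{2.1}), with the factor $(-1)^n$ and the interval flipping accordingly; here the endpoint $x=\tfrac14$ is included because the corresponding alternating series still converges.

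The logarithmic bookkeeping is routine. The one genuine subtlety is that the substitution $x=1/(4a^2)$ reaches only positive $x$, so Lemma~\ref{lem2} by itself proves the identity on $(0,\tfrac14)$ only; the extension to negative arguments and to the boundary point $x=-\tfrac14$ must be handled separately, via the identity theorem and Abel's theorem together with the radius-of-convergence computation. I expect this continuation step to be the main thing requiring care.
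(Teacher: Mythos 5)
Your proposal is correct and follows essentially the same route as the paper: the paper likewise combines Lemma~\ref{lem2} with $\sum_{n\ge0}\binom{2n}{n}(2a)^{-2n}=a/\sqrt{a^2-1}$ to isolate $\sum_{n\ge0}\binom{2n}{n}O_n(2a)^{-2n}=\frac{a\ln\left(a^2/(a^2-1)\right)}{2\sqrt{a^2-1}}$ and then sets $x=(2a)^{-2}$. The only difference is that you make explicit the continuation from $(0,\tfrac14)$ to the full stated interval (identity theorem plus Abel's theorem at the endpoint), a step the paper's proof leaves implicit.
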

\begin{proof}
	From Lehmer's paper \cite{Lehmer} (or by the generalized binomial theorem) we have
	\begin{equation*}
	\sum_{n=0}^\infty \binom {2n}{n} x^n = \frac{1}{\sqrt{1-4x}}, \quad x\in\Big(\!-\frac14;\frac{1}{4}\Big],
	\end{equation*}
	which shows that
	\begin{equation*}
	\sum_{n=0}^\infty \binom {2n}{n} \frac{1}{(2a)^{2n}} = \frac{a}{\sqrt{a^2-1}}.
	\end{equation*}
	Hence, from Lemma \ref{lem2}
	\begin{equation*}
	\sum_{n=0}^\infty \binom {2n}{n} \frac{O_n}{(2a)^{2n}}  = \frac{a \ln\big (\frac{a^2}{a^2-1}\big)}{2 \sqrt{a^2-1}},\quad a>1.
	\end{equation*}
	Set $x=(2a)^{-2}$ and simplify.
\end{proof}

We continue with several combinatorial identities involving harmonic numbers $H_n$, odd harmonic numbers $O_n$, and Catalan numbers $C_n$.
\begin{corollary}
	For integers $n\geq 0$, we have 
	\begin{equation}\label{cor_id1}
	\sum_{j=0}^n 4^{n-j} C_j H_{n-j}  = 2^{2n+1} H_{n+1} - \binom {2(n+1)}{n+1} O_{n+1}.
	\end{equation}
\end{corollary}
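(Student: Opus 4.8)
The plan is to recognize the left-hand side as a Cauchy product of two sequences and then match ordinary generating functions on both sides. Writing the summand as $C_j\cdot\bigl(4^{\,n-j}H_{n-j}\bigr)$, the sum $\sum_{j=0}^n 4^{\,n-j}C_j H_{n-j}$ is precisely the coefficient of $x^n$ in the product $C(x)\,D(x)$, where $C(x)=\sum_{j\geq 0}C_j x^j$ is the Catalan generating function and $D(x)=\sum_{k\geq 0}4^k H_k x^k$. I would use the standard evaluations $C(x)=\frac{1-\sqrt{1-4x}}{2x}$ and, from the classical harmonic generating function $\sum_{k\geq 0}H_k y^k=\frac{-\ln(1-y)}{1-y}$ with $y=4x$, the formula $D(x)=\frac{-\ln(1-4x)}{1-4x}$. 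Multiplying these gives a closed form for the generating function of the left-hand side.

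For the right-hand side I would treat the two terms separately. First, shifting the index in the harmonic generating function yields $\sum_{n\geq 0}4^n H_{n+1}x^n=\frac{-\ln(1-4x)}{4x(1-4x)}$ (the $H_0=0$ term drops out), so the contribution of $2\cdot 4^n H_{n+1}$ has generating function $\frac{-\ln(1-4x)}{2x(1-4x)}$. Second, I would invoke the identity \eqref{main_id1} proved in the theorem and shift its index: since $O_0=0$, dividing $\sum_{m\geq 1}\binom{2m}{m}O_m x^m=-\frac{\ln(1-4x)}{2\sqrt{1-4x}}$ by $x$ gives $\sum_{n\geq 0}\binom{2n+2}{n+1}O_{n+1}x^n=-\frac{\ln(1-4x)}{2x\sqrt{1-4x}}$. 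Subtracting, the right-hand side acquires the generating function $\frac{\ln(1-4x)}{2x}\cdot\frac{\sqrt{1-4x}-1}{1-4x}$.

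The final step is to verify that the two closed forms coincide. The left-hand side generating function simplifies to $\frac{-\ln(1-4x)}{2x}\cdot\frac{1-\sqrt{1-4x}}{1-4x}$, which is identical to the right-hand side expression after factoring $-1$ out of the numerator $1-\sqrt{1-4x}$. Equating coefficients of $x^n$ then establishes the claimed identity.

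The main obstacle is bookkeeping rather than conceptual: one must handle the index shifts $H_{n+1}$ and $O_{n+1}$ correctly, tracking the vanishing boundary terms $H_0=O_0=0$, and carry the factors of $4^n$ through cleanly via the substitution $y=4x$. Beyond that, the argument reduces to a routine algebraic identity between two rational-times-logarithm generating functions, valid on the common disk $|x|<\tfrac14$ where all three series converge absolutely, so comparing coefficients is legitimate.
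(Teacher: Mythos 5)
Your proof is correct and takes essentially the same route as the paper: the paper rewrites \eqref{main_id1} as $\tfrac12\sqrt{1-4x}\cdot\bigl(-\tfrac{\ln(1-4x)}{1-4x}\bigr)$, expands $\tfrac12\sqrt{1-4x}=\tfrac12-xC(x)$, and compares coefficients via Cauchy's product rule, which after dividing by $x$ is exactly your identity between the two generating functions. The only difference is direction and bookkeeping --- you verify that the closed forms of both sides agree, while the paper derives the sum identity directly from \eqref{main_id1}; the algebraic content is identical.
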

\begin{proof}
	Write the right-hand side of \eqref{main_id1} as
	$\frac{1}{2} \sqrt{1-4x}\cdot \left( - \frac{\ln (1-4x)}{1-4x} \right)$
	and use the gene\-rating functions
	\begin{equation*}
	\frac{1}{2} \sqrt{1-4x} = \frac{1}{2} \sum_{n=0}^\infty 4^n \Big (-\frac{1}{2}\Big )_n \frac{x^n}{n!}
	= \frac{1}{2} - \sum_{n=0}^\infty \binom {2n}{n} \frac{x^{n+1}}{n+1}
	\end{equation*}
	where  $(\lambda)_n=\lambda(\lambda + 1)\cdots (\lambda +n-1)$, $(\lambda)_0=1,$ is the Pochhammer symbol and
	$	- \frac{\ln (1-4x)}{1-4x} = \sum\limits_{n=0}^\infty 4^n H_n x^n.$
	
	Apply Cauchy's product rule, then extract and compare the coefficients of $x^n$.
\end{proof}
\begin{remark}
	We can write \eqref{cor_id1} equivalently as follows
	\begin{equation*}
	\sum_{j=0}^n 4^{j} C_{n-j} H_{j}  = 2^{2n+1} H_{n+1} - (n+2)C_{n+1}O_{n+1}.
	\end{equation*}
	Also, it follows from the known formula $C_n=\frac{n+2}{4n+2}\,C_{n+1}$ that $C_{-1}=-\frac12$.
	Hence, we have a still other form of \eqref{cor_id1} given by
	\begin{equation*}
	\sum_{j=0}^{n} 4^{j} C_{n-j-1} H_{j+1} = -\frac{n+2}{4}C_{n+1}O_{n+1}.
	\end{equation*}
\end{remark}
\begin{corollary}
	For integers $n\geq 0$, we have 
	\begin{equation*}
	\sum_{j=0}^n \binom {2(n-j)}{n-j}  C_j O_{n-j}= \frac{1}{2} \binom {2(n+1)}{n+1} O_{n+1} - \frac{4^{n}}{n+1}.
	\end{equation*}
\end{corollary}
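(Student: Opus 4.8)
The plan is to read the left-hand side as a single Cauchy coefficient. The sum $\sum_{j=0}^n \binom{2(n-j)}{n-j} O_{n-j}\, C_j$ is exactly the coefficient of $x^n$ in the product $A(x)C(x)$, where $A(x)=\sum_{n\ge0}\binom{2n}{n}O_n x^n$ is the generating function \eqref{main_id1} and $C(x)=\sum_{n\ge0}C_n x^n=\frac{1-\sqrt{1-4x}}{2x}$ is the classical Catalan generating function. Thus the whole task reduces to computing $A(x)C(x)$ in closed form and recognizing it as the generating function of the proposed right-hand side.

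First I would substitute the closed form from \eqref{main_id1} and introduce $u=\sqrt{1-4x}$, so that $1-4x=u^2$, $x=(1-u^2)/4$, and $A(x)=-\frac{\ln u}{u}$. Multiplying by $C(x)$ and using $\frac{1-u}{2x}=\frac{2}{1+u}$, the product collapses to
\[
A(x)C(x)=-\frac{\ln u}{u}\cdot\frac{2}{1+u}=-\frac{2\ln u}{u(1+u)}.
\]

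Next I would compute the generating function of the right-hand side term by term. Because $O_0=0$, the series $A(x)$ has vanishing constant term, so shifting the index gives $\sum_{n\ge0}\binom{2(n+1)}{n+1}O_{n+1}x^n=A(x)/x$, and the first summand contributes $A(x)/(2x)$. For the second, $\sum_{n\ge0}\frac{4^n}{n+1}x^n=-\frac{\ln(1-4x)}{4x}$. Hence the right-hand side has generating function $\frac{A(x)}{2x}+\frac{\ln(1-4x)}{4x}$. Passing again to the variable $u$, one finds $\frac{A(x)}{2x}=-\frac{2\ln u}{u(1-u^2)}$ and $\frac{\ln(1-4x)}{4x}=\frac{2\ln u}{1-u^2}$, whose sum simplifies (via $\frac{u-1}{1-u^2}=-\frac{1}{1+u}$) to $-\frac{2\ln u}{u(1+u)}$. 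This is precisely $A(x)C(x)$, so comparing the coefficients of $x^n$ finishes the argument.

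I expect the only real obstacle to be bookkeeping rather than ideas: one must handle the index shift correctly, where $O_0=0$ is what guarantees there is no stray constant term, and one must avoid sign slips in the radical simplifications. The substitution $u=\sqrt{1-4x}$ is the device that keeps both sides manageable and exposes the common closed form $-\frac{2\ln u}{u(1+u)}$; without it, the matching of the two generating functions is an unpleasant manipulation of nested square roots and logarithms.
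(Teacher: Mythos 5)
Your proposal is correct and takes essentially the same route as the paper: a Cauchy-product generating-function argument built on \eqref{main_id1} and the Catalan generating function $C(x)=\frac{1-\sqrt{1-4x}}{2x}$. The paper organizes the same algebra slightly more economically---it multiplies \eqref{main_id1} by $2\sqrt{1-4x}$ to obtain $-\ln(1-4x)$ and compares coefficients directly, which is precisely your identity $A(x)C(x)=\frac{A(x)}{2x}+\frac{\ln(1-4x)}{4x}$ after clearing the factor $2x$, so the separate closed-form matching in the variable $u=\sqrt{1-4x}$ becomes unnecessary.
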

\begin{proof}
	Work with the relation
	$$
	2 \sqrt{1-4x}\cdot \sum\limits_{n=0}^\infty \binom {2n}{n} O_n x^n = - \ln (1-4x)
	$$
	and apply Cauchy's product rule.
\end{proof}
\begin{corollary}
	For integers $n\geq 0$, we have
	\begin{equation*}
	\sum_{j=0}^n \binom {2j}{j} \binom {2(n-j)}{n-j} O_j O_{n-j} = 4^{n-1} \sum_{j=1}^n \frac{H_{n-j}}{j}.
	\end{equation*}
\end{corollary}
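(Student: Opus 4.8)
The plan is to recognize the left-hand side as the diagonal coefficient of a Cauchy product and then to square the generating function already established in \eqref{main_id1}. Writing $f(x)=\sum_{n=0}^\infty \binom{2n}{n}O_n x^n$, the sum $\sum_{j=0}^n \binom{2j}{j}\binom{2(n-j)}{n-j}O_jO_{n-j}$ is precisely the coefficient of $x^n$ in $f(x)^2$. By \eqref{main_id1} we have $f(x)=-\frac{\ln(1-4x)}{2\sqrt{1-4x}}$, so that
$$f(x)^2=\frac{\ln^2(1-4x)}{4(1-4x)}.$$
Thus it suffices to expand this closed form as a power series in $x$ and read off the coefficient of $x^n$.

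First I would factor $\frac{\ln^2(1-4x)}{1-4x}$ as a product of two elementary generating functions, namely
$$-\ln(1-4x)=\sum_{j=1}^\infty \frac{4^j}{j}\,x^j \qquad\text{and}\qquad -\frac{\ln(1-4x)}{1-4x}=\sum_{k=0}^\infty 4^k H_k\, x^k,$$
the second being the classical generating function of the harmonic numbers with the variable replaced by $4x$. Applying Cauchy's product rule to these two series, and noting that the $k=0$ term carries $H_0=0$, the coefficient of $x^n$ in $\frac{\ln^2(1-4x)}{1-4x}$ equals $4^n\sum_{j=1}^n \frac{H_{n-j}}{j}$.

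Finally, dividing by $4$ to account for the prefactor in $f(x)^2$, the coefficient of $x^n$ in $f(x)^2$ is $4^{n-1}\sum_{j=1}^n \frac{H_{n-j}}{j}$, and comparing this with the Cauchy-square expression for $f(x)^2$ yields the claimed identity. I expect the only point requiring care to be the bookkeeping of the factor $4^n$ together with the shift of the summation index, since it is exactly the vanishing of the $H_0$ term that allows the sum to begin at $j=1$; beyond that the argument is a routine comparison of coefficients.
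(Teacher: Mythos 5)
Your proof is correct and follows essentially the same route as the paper: both square the generating function \eqref{main_id1}, factor the result as $-\ln(1-4x)\cdot\bigl(-\frac{\ln(1-4x)}{1-4x}\bigr)$, and extract the coefficient of $x^n$ via Cauchy's product rule using the logarithm and harmonic-number series. The only nitpick is that the sum starts at $j=1$ because $-\ln(1-4x)$ has no constant term (the vanishing of $H_0$ merely makes the $j=n$ term zero), but this does not affect the argument.
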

\begin{proof}
	Work with the relation
	$$	\Big ( \frac{\ln (1-4x)}{\sqrt{1-4x}} \Big )^2 = - \ln (1-4x)\cdot \left( - \frac{\ln (1-4x)}{1-4x} \right)
	$$
	in conjunction with the  series
	$$	- \ln (1-x) = \sum\limits_{n=1}^\infty \frac{x^n}{n},$$	
	and Cauchy's product rule.
\end{proof}
\begin{remark} We note that due to a result of Carlitz \cite{Carlitz} we also have for integers $r\geq 1$
	\begin{equation*}
	\sum_{j=0}^n \binom {2j}{j} \binom {2(n-j)}{n-j} O_j O_{n-j} F_{rj} L_{r(n-j)} = 4^{n-1} F_{rn}  \sum_{j=1}^n \frac{H_{n-j}}{j}.
	\end{equation*}
\end{remark}
\begin{corollary}
	For integers $n\geq 0$,
	\begin{equation*}
	\sum_{j=0}^n \binom {2j}{j} \frac{O_{j} - H_{n-j}}{4^{j}(n+1-j)}  = 0.
	\end{equation*}
\end{corollary}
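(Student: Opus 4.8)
The plan is to split the difference into two separate Cauchy products and to observe that both are coefficient extractions from the \emph{same} power series, so that the difference collapses identically to zero. Writing the left-hand side as $S_1 - S_2$ with
\[
S_1 = \sum_{j=0}^n \binom{2j}{j}\frac{O_j}{4^j}\cdot\frac{1}{n-j+1}, \qquad S_2 = \sum_{j=0}^n \binom{2j}{j}\frac{1}{4^j}\cdot\frac{H_{n-j}}{n-j+1},
\]
I would view each as the coefficient $[x^n]$ of a product of two generating functions. The reciprocal factor $\frac{1}{n-j+1}$ is carried by the elementary series $\sum_{k\geq 0}\frac{x^k}{k+1} = -\frac{\ln(1-x)}{x}$.

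For $S_1$, rescaling \eqref{main_id1} via $x\mapsto x/4$ gives $\sum_{j\geq 0}\binom{2j}{j}\frac{O_j}{4^j}x^j = -\frac{\ln(1-x)}{2\sqrt{1-x}}$, so that $S_1 = [x^n]\Big(-\frac{\ln(1-x)}{2\sqrt{1-x}}\Big)\Big(-\frac{\ln(1-x)}{x}\Big) = [x^n]\frac{\ln^2(1-x)}{2x\sqrt{1-x}}$. For $S_2$, I would use the central binomial series $\sum_{j\geq 0}\binom{2j}{j}\frac{x^j}{4^j}=\frac{1}{\sqrt{1-x}}$ together with the generating function $\sum_{k\geq 0}\frac{H_k}{k+1}x^k = \frac{\ln^2(1-x)}{2x}$, which yields $S_2 = [x^n]\frac{1}{\sqrt{1-x}}\cdot\frac{\ln^2(1-x)}{2x} = [x^n]\frac{\ln^2(1-x)}{2x\sqrt{1-x}}$, exactly the same expression. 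Hence $S_1 = S_2$ and the claimed sum is zero, with no individual coefficient ever needing to be computed.

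The one nonroutine ingredient is the generating function $\sum_{k\geq 0}\frac{H_k}{k+1}x^k = \frac{\ln^2(1-x)}{2x}$. I would obtain it by integrating the standard series $\sum_{k\geq 0}H_k x^k = -\frac{\ln(1-x)}{1-x}$ (itself a rescaling of the series used in the first corollary) term by term: since the substitution $u = -\ln(1-t)$ turns the integrand into $u\,du$, one has $\int_0^x -\frac{\ln(1-t)}{1-t}\,dt = \frac{1}{2}\ln^2(1-x)$, so $\sum_{k\geq 0}\frac{H_k}{k+1}x^{k+1} = \frac{1}{2}\ln^2(1-x)$, and dividing by $x$ completes it. The only points demanding care are the legitimacy of term-by-term integration and the Cauchy products, but these are routine since all the series converge on a common punctured neighborhood of the origin. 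The appealing feature of the argument is that the two halves of the sum generate identical closed forms, so the identity follows purely by cancellation rather than by identifying a common combinatorial value of the coefficient.
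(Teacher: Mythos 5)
Your proof is correct and is essentially the paper's own argument: both recognize the two halves of the sum as the coefficient of $x^n$ in the single function $\frac{\ln^2(1-x)}{2x\sqrt{1-x}}$ via two different Cauchy-product factorizations, one pairing the generating function of $\binom{2j}{j}O_j/4^j$ with $-\ln(1-x)/x$, the other pairing the central binomial series $1/\sqrt{1-x}$ with $\sum_{k\ge 0}\frac{H_k}{k+1}x^k=\frac{\ln^2(1-x)}{2x}$. The only (immaterial) difference is how this last series is justified: you integrate $-\ln(1-x)/(1-x)$ term by term, while the paper invokes the dilogarithm identity $\sum_{n\ge1}\frac{H_n}{n}x^n=\Li_2(x)+\frac12\ln^2(1-x)$ and cancels $\Li_2(x)$, so your version is in fact slightly more self-contained.
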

\begin{proof}
	Work with the generating function \cite[Entry (7.1)]{Furdui2}
	\begin{equation*}
	\sum_{n=1}^\infty \frac{H_n}{n} x^n = \Li_2(x) + \frac{1}{2} \ln^2(1-x), \quad x\in[-1,1),
	\end{equation*}
	where $\Li_2(x)$ is the dilogarithm function defined, for $|x|\leq1$, by $\Li_2(x)=\sum\limits_{n=1}^{\infty}\frac{x^n}{n^2}$. Note that
	\begin{equation*}
	\sum_{n=1}^\infty \frac{H_n}{n} x^n - \Li_2(x) = \sum_{n=1}^\infty \frac{H_{n-1}}{n} x^n.
	\end{equation*}
	Multiply through both sides by $-1/\sqrt{1-4x}$, apply Cauchy's product rule, and extract the coefficient of $x$. The result is
	\begin{equation*}
	\sum_{j=1}^n \binom {2j}{j} \frac{O_j - H_{n-j}}{4^{j}(n+1-j)}  = \frac{H_{n}}{n+1},
	\end{equation*}
	which is the stated identity.
\end{proof}

In the next three theorems the attentive reader will recognize connections to hyperharmonic numbers (see \cite{Benjamin,Dil}).
\begin{theorem}\label{main_thm2}
	For integers $n$, $p\geq 0$, we have the following identity: 
	\begin{equation}\label{thm2_id}
	\begin{split}
	\sum_{j=0}^n \binom {p+j}{j} \binom {2(n-j)}{n-j} 4^{j} O_{n-j}  &= 2^{2n-1} \binom {n+p+1}{n} \left ( H_{n+p+1} - H_{p+1} \right ) \\
	& \quad - \sum_{j=1}^n \binom {p+j}{j-1} 4^{j-1}C_{n-j}  \left ( H_{p+j} - H_{p+1} \right ).
	\end{split}
	\end{equation}
	In particular,
	\begin{equation*}
	\sum_{j=0}^n \binom {2j}{j}  \frac{O_{j}}{4^j}  = \frac{n+1}{2} ( H_{n+1} - 1)
	- \sum_{j=1}^n  \frac{j\,C_{n-j}}{4^{n+1-j}}  \left ( H_{j} - 1 \right ).
	\end{equation*}
\end{theorem}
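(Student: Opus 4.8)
The plan is to recognize the left-hand side of \eqref{thm2_id} as a single Cauchy product, read it off from one generating function, and then split that generating function into exactly the two pieces appearing on the right.

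First I would recall the elementary expansion $(1-4x)^{-(p+1)} = \sum_{j=0}^\infty \binom{p+j}{j}\, 4^j x^j$ and multiply it by Chen's series \eqref{main_id1}, namely $\sum_{n=0}^\infty \binom{2n}{n} O_n x^n = -\ln(1-4x)/\big(2\sqrt{1-4x}\big)$. By Cauchy's product rule the left-hand side of \eqref{thm2_id} is then precisely the coefficient of $x^n$ in
\[
G(x) := -\frac{\ln(1-4x)}{2(1-4x)^{p+3/2}},
\]
so the entire theorem reduces to extracting $[x^n]\,G(x)$ and reorganizing it into the stated form.

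Next I would isolate the ``integer-shift'' companion $H(x) := -\ln(1-4x)/\big(2(1-4x)^{p+2}\big)$, whose coefficients produce the first term on the right. These come from differentiating $(1-4x)^{-s} = \sum_n \frac{(s)_n}{n!}4^n x^n$ with respect to $s$: since $\frac{d}{ds}(1-4x)^{-s} = -\ln(1-4x)\,(1-4x)^{-s}$ and $\frac{d}{ds}(s)_n = (s)_n(\psi(s+n)-\psi(s))$, evaluating at the integer $s=p+2$ and using $\psi(m+1)=H_m-\gamma$ gives $[x^n]\,H(x) = 2^{2n-1}\binom{n+p+1}{n}(H_{n+p+1}-H_{p+1})$. (Equivalently one may write $H(x) = (1-4x)^{-(p+1)}\cdot\big(-\ln(1-4x)/(1-4x)\big)$ and invoke the series $-\ln(1-4x)/(1-4x) = \sum 4^n H_n x^n$ used earlier, which reduces this to a standard harmonic-number convolution identity.) It then remains to handle $G(x)-H(x)$. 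Factoring $(1-4x)^{-(p+2)}$ out of the difference of the two powers yields the clean identity $G(x)-H(x) = \frac{-\ln(1-4x)}{(1-4x)^{p+2}}\cdot\frac{\sqrt{1-4x}-1}{2} = 2H(x)\cdot\big(-\sum_{k\ge0}C_k x^{k+1}\big)$, where I use the Catalan generating function $\frac{1-\sqrt{1-4x}}{2}=\sum_{k\ge0}C_k x^{k+1}$. A second application of Cauchy's product rule, with the reindexing $i=j-1$, $k=n-j$, converts this into $-\sum_{j=1}^n \binom{p+j}{j-1}4^{j-1}C_{n-j}(H_{p+j}-H_{p+1})$ at the level of the $x^n$-coefficient, which is exactly the second term. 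Adding the two contributions gives \eqref{thm2_id}, and the displayed special case is just $p=0$.

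The only genuinely delicate points are the harmonic-number bookkeeping when passing from the $\psi$-values to $H_{n+p+1}-H_{p+1}$, and verifying that the shift-by-one reindexing of the Catalan convolution reproduces $\binom{p+j}{j-1}$ together with $H_{p+j}-H_{p+1}$ over the correct range $1\le j\le n$; both become routine once the decomposition $G=H+(G-H)$ is in place, so I expect no serious obstacle beyond careful coefficient matching.
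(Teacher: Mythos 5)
Your proof is correct and is essentially the paper's own argument: both identify the left-hand side as $[x^n]$ of $-\ln(1-4x)/\bigl(2(1-4x)^{p+3/2}\bigr)$, obtain the coefficients of $-\ln(1-4x)/(1-4x)^{p+2}$ from the $\lambda$-derivative of the binomial series $(1-x)^{-\lambda}$ evaluated at the integer $\lambda=p+2$ (yielding $H_{n+p+1}-H_{p+1}$ via $\psi$), and then reinstate the half-integer power through the Catalan expansion of $\sqrt{1-4x}$ plus a Cauchy product and reindexing. Your decomposition $G=H+(G-H)$ with $G-H=2H\cdot\bigl(-\sum_{k\ge0}C_kx^{k+1}\bigr)$ is just a rewording of the paper's factorization $G=\sqrt{1-4x}\,H$ combined with $\sqrt{1-4x}=1-2\sum_{k\ge0}C_kx^{k+1}$, so the two arguments coincide step for step.
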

\begin{proof}
	We start with the generating function
	$$(1-x)^{-\lambda} = \sum\limits_{n=0}^\infty \frac{(\lambda)_n}{n!} x^n,\quad \lambda \in\mathbb{C}, \,\, |x|<1.
	$$
	As
	$\frac{d}{d\lambda} (\lambda)_n = \frac{d}{d\lambda} \frac{\Gamma(\lambda + n)}{\Gamma(\lambda)}
	= (\lambda)_n \big(\psi (\lambda + n) - \psi(\lambda) \big)$
	we get
	\begin{equation*}
	\frac{d}{d\lambda} (1-x)^{-\lambda} = - \frac{\ln (1-x)}{(1-x)^{\lambda}}
	= \sum_{n=0}^\infty \frac{(\lambda)_n}{n!} \big (\psi (\lambda + n) - \psi(\lambda) \big ) x^n.
	\end{equation*}
	Thus, upon replacing $\lambda$ by $p+1$, we get
	\begin{align*}
	\sum_{n=0}^\infty \binom {n+p}{n} &4^n x^n \cdot \sum_{n=0}^\infty \binom {2n}{n} O_n x^n 
	= \frac{1}{2} \sqrt{1-4x}\cdot  \left( - \frac{\ln (1-4x)}{(1-4x)^{p+2}} \right)
	\\
	\qquad &= \frac{1}{2} \sqrt{1-4x} \cdot  \sum_{n=0}^\infty \binom {n+p+1}{n} 4^n \big (\psi (p + 2 + n) - \psi(p+2) \big ) x^n
	\\
	\qquad &= \frac{1}{2} \sqrt{1-4x} \cdot \sum_{n=0}^\infty \binom {n+p+1}{n} 4^n \left (H_{p + 1 + n} - H_{p+1} \right ) x^n,
	\end{align*}
	where in the last step it was used that $\psi(n)=-\gamma + H_{n-1}$. Now, proceed as before, work with Cauchy's product rule, 	extract and compare the coefficients of $x^n$.
\end{proof}
\begin{remark}
	An equivalent version of \eqref{thm2_id} is given by
	\begin{equation*}
	\sum_{j=0}^n \binom{p+j}{j} \binom{2(n-j)}{n-j} 4^{j} O_{n-j} = -\sum_{j=0}^{n} \binom{p+j+1}{j} 4^j C_{n-j-1} (H_{p+j+1} - H_{p+1}),
	\end{equation*}
	where  $C_{-1}=-1/2$ was used.
\end{remark}

The next theorem offers a combinatorial expression for the same types of summands as in Theorem \ref{main_thm2}
but with an additional factor $1/j$.
\begin{theorem} 
	For all integers $n\geq 1$ and $p\geq 0$, we have the following identity:
	\begin{align*}
	&\sum_{j=1}^n  \binom {p+j}{j} \binom {2(n-j)}{n-j} \frac{4^{j}}{j} O_{n-j} = - \binom {2n}{n} H_p O_n + 2^{2n-1} S_n
	- \sum_{j=0}^{n-1}  4^{n-j-1} C_j S_{n-j-1} \\
	&\quad + \frac12\sum_{k=1}^p \frac{1}{k} \left ( 4^{n}\binom {n+k}{n} (H_{n+k}-H_k) -\frac12 \sum_{j=1}^{n}  \binom {j+k-1}{j-1} 4^{j}( H_{k+j-1} - H_{k})C_{n-j} \right )\!,
	\end{align*}
	where, for all $n\geq 1$, $S_n$ is defined by
	$$
	S_n = \sum\limits_{m=1}^n \frac{H_{n-m}}{m},\quad S_0 = 0.$$
	In particular,
	\begin{equation*}
	\sum_{j=1}^n \binom {2(n-j)}{n-j}  \frac{4^{j}}{j} O_{n-j} = 2^{2n-1} S_n - \sum_{j=1}^n 4^{j-1} C_{n-j}  S_{j-1}.
	\end{equation*}
\end{theorem}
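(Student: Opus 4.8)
The plan is to recognize the left-hand side as the coefficient of $x^n$ in the Cauchy product of the ``integrated'' series
$$\widetilde P(x)=\sum_{j=1}^\infty \binom{p+j}{j}\frac{4^j}{j}\,x^j$$
with the generating function $\sum_{m=0}^\infty\binom{2m}{m}O_m x^m=-\frac{\ln(1-4x)}{2\sqrt{1-4x}}$ from \eqref{main_id1}. The first task is therefore to find a closed form for $\widetilde P(x)$. Since $\sum_{j\ge 0}\binom{p+j}{j}4^jx^j=(1-4x)^{-(p+1)}$, dividing the $j$th term by $j$ amounts to integration, so that $\widetilde P(x)=\int_0^x\frac{(1-4t)^{-(p+1)}-1}{t}\,dt$.

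I would evaluate this integral via the Pascal recursion $\binom{p+j}{j}=\binom{p-1+j}{j}+\binom{p-1+j}{j-1}$, which yields $\widetilde P_p(x)=\widetilde P_{p-1}(x)+\frac{(1-4x)^{-p}-1}{p}$ for $p\ge1$, with base case $\widetilde P_0(x)=-\ln(1-4x)$. Unrolling the recursion (or, more quickly, verifying by differentiation, using $x\widetilde P'(x)=(1-4x)^{-(p+1)}-1$ together with a geometric sum and the value $\widetilde P(0)=0$) gives the crucial closed form
$$\widetilde P(x)=-\ln(1-4x)-H_p+\sum_{k=1}^p\frac1k\,(1-4x)^{-k}.$$

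Then I would multiply by $Q(x):=-\frac{\ln(1-4x)}{2\sqrt{1-4x}}$ and split the product into three pieces. The term $-H_pQ(x)$ contributes $-\binom{2n}{n}H_pO_n$ at once. The term $-\ln(1-4x)\,Q(x)=\frac{\ln^2(1-4x)}{2\sqrt{1-4x}}$ I would treat by first noting that $S_n$ is the Cauchy convolution of $(1/m)_{m\ge1}$ and $(H_k)_{k\ge0}$, so that $\frac{\ln^2(1-4x)}{1-4x}=\sum_n S_n4^nx^n$; combining with $\frac12\sqrt{1-4x}=\frac12-\sum_{m\ge1}C_{m-1}x^m$ and convolving produces exactly $2^{2n-1}S_n-\sum_{j=0}^{n-1}4^{n-j-1}C_jS_{n-j-1}$. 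For the remaining piece $\sum_{k=1}^p\frac1k(1-4x)^{-k}Q(x)$, the key manoeuvre is to rewrite each summand as $\frac12\sqrt{1-4x}\cdot\frac{-\ln(1-4x)}{(1-4x)^{k+1}}$, because the integer shift $k+1$ makes \emph{ordinary} (rather than odd) harmonic numbers appear: exactly as in the proof of Theorem \ref{main_thm2}, $-\frac{\ln(1-4x)}{(1-4x)^{k+1}}=\sum_n\binom{n+k}{n}4^n(H_{n+k}-H_k)x^n$. Convolving once more against $\frac12\sqrt{1-4x}$ reproduces the double sum in the statement.

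Finally I would extract and compare the coefficient of $x^n$ from all three pieces to assemble the full identity; putting $p=0$ annihilates the $H_p$ term and leaves the empty $k$-sum, recovering the stated special case. I expect the main obstacle to be twofold. First, locating the closed form for $\widetilde P(x)$ — the integration (equivalently, the $p$-recursion) that distinguishes this result from Theorem \ref{main_thm2}. Second, the bookkeeping in the last piece: one must use the integer factorization $\frac12\sqrt{1-4x}\cdot(1-4x)^{-(k+1)}$ rather than merging the radical into the power (which would spuriously generate odd harmonic numbers), and then match indices (via $m=n-j+1$) so that $\binom{n-m+k}{n-m}4^{\,n-m}$ becomes $\tfrac14\binom{j+k-1}{j-1}4^{\,j}$, which accounts for the factor $\tfrac14$ appearing in the statement.
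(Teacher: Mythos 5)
Your proposal is correct and takes essentially the same route as the paper: the paper likewise multiplies the series $\sum_{j\ge 1}\binom{p+j}{j}\frac{x^j}{j} = -H_p-\ln(1-x)+\sum_{k=1}^p\frac{1}{k(1-x)^k}$ (which it cites from Boyadzhiev--Frontczak rather than deriving via your Pascal-recursion/integration argument) against the odd-harmonic generating function, splits the product into the same three pieces, and extracts coefficients using the same $S_n$ expansion and the same $\binom{n+k}{n}4^n(H_{n+k}-H_k)$ expansion from Theorem \ref{main_thm2}. The only difference is that you make the key generating function self-contained instead of quoting it, which is a minor (and welcome) addition rather than a different method.
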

\begin{proof}
	From \cite[Theorem 3]{Boyadzhiev2} it is known that
	\begin{equation*}
	\sum_{n=1}^\infty \binom {n+p}{n} \frac{x^n}{n} = - H_p - \ln(1-x) + \sum_{k=1}^p \frac{1}{k(1-x)^k}, \quad |x|<1.
	\end{equation*}
	Thus, applying Cauchy's product rule once more time and keeping in mind that $O_0=0$ we obtain
	\begin{align*}
	\sum_{n=1}^\infty &\binom {n+p}{n}  \frac{4^n}{n} x^n \cdot \sum_{n=1}^\infty \binom {2n}{n} O_n x^n= \sum_{n=1}^\infty \left ( \sum_{j=1}^n \binom {j+p}{j} \frac{4^j}{j} \binom {2(n-j)}{n-j} O_{n-j}\right ) x^n \\
	&= - \frac{H_p \sqrt{1-4x}}{2} \left( - \frac{\ln (1-4x)}{1-4x} \right) - \frac{\sqrt{1-4x}}{2} \left( - \frac{\ln^2 (1-4x)}{1-4x} \right)\\
	&\quad+ \sum_{k=1}^p \frac{\sqrt{1-4x} }{2k} \left( - \frac{\ln (1-4x)}{(1-4x)^{k+1}} \right)\!.
	\end{align*}
	Extract the coefficient of $x^n$ while making use of
	\begin{equation}\label{Sn_id}
	- \frac{\ln^2 (1-4x)}{1-4x} = - \sum_{n=1}^\infty 4^n S_n x^n.
	\end{equation}
\end{proof}
\begin{corollary} For $n\geq 1$,
	$$S_n = 2 \sum\limits_{m=1}^n \frac{H_{m-1}}{m}.
	$$
\end{corollary}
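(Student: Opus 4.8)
The plan is to read off both $S_n$ and the conjectured right-hand side as coefficients of the same power series and then compare. Writing $T_n := \sum_{m=1}^n \frac{H_{m-1}}{m}$, the goal becomes $S_n = 2T_n$.

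First I would record the generating function of $S_n$. Since $S_n = \sum_{m=1}^n \frac{1}{m}\,H_{n-m}$ is the Cauchy product of the coefficient sequences of $-\ln(1-x) = \sum_{m\geq 1} x^m/m$ and $-\ln(1-x)/(1-x) = \sum_{k\geq 0} H_k x^k$ (recall $H_0=0$), we obtain
\[
\sum_{n=1}^\infty S_n x^n = \frac{\ln^2(1-x)}{1-x},
\]
which is exactly \eqref{Sn_id} after the substitution $x\mapsto 4x$.

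Next I would compute the generating function of $T_n$. The key input is $\sum_{n=1}^\infty \frac{H_{n-1}}{n}x^n = \frac12\ln^2(1-x)$, which follows at once by subtracting $\Li_2(x) = \sum_{n\geq 1} x^n/n^2$ from the dilogarithm identity $\sum_{n\geq 1}\frac{H_n}{n}x^n = \Li_2(x) + \frac12\ln^2(1-x)$ already employed above. Multiplying this series by $1/(1-x)$, which turns a coefficient sequence into its sequence of partial sums, yields
\[
\sum_{n=1}^\infty T_n x^n = \frac{1}{1-x}\cdot \frac{1}{2}\ln^2(1-x) = \frac{1}{2}\cdot\frac{\ln^2(1-x)}{1-x}.
\]
Comparing the two displays gives $\sum_{n\geq 1} S_n x^n = 2\sum_{n\geq 1} T_n x^n$, and equating coefficients of $x^n$ proves $S_n = 2T_n$.

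There is essentially no hard step here: everything reduces to identifying $\frac{H_{n-1}}{n}$ as the coefficients of $\frac12\ln^2(1-x)$, so the only thing to be careful about is the bookkeeping with the convention $H_0=0$ and the interpretation of multiplication by $(1-x)^{-1}$ as partial summation. As an independent check one may note the closed form $2T_n = H_n^2 - H_n^{(2)}$, obtained by splitting $\big(\sum_{i=1}^n 1/i\big)^2$ into its diagonal and off-diagonal parts, where $H_n^{(2)} = \sum_{i=1}^n 1/i^2$; thus the corollary is equivalent to the classical evaluation $S_n = H_n^2 - H_n^{(2)}$, which one can also confirm directly for the first few values of $n$.
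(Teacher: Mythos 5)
Your proof is correct and takes essentially the same route as the paper: both read off the coefficients of $\frac{\ln^2(1-x)}{1-x}$ in two ways, once as $S_n$ via the Cauchy product (this is \eqref{Sn_id} after the rescaling $x\mapsto 4x$) and once as $2\sum_{m=1}^n \frac{H_{m-1}}{m}$ by multiplying the series $\frac12\ln^2(1-x)=\sum_{n\ge 1}\frac{H_{n-1}}{n}x^n$ by the geometric series $(1-x)^{-1}$. The only cosmetic difference is that you derive the square-logarithm series from the dilogarithm identity (which the paper itself uses earlier), whereas the paper simply invokes that power series, and separately offers an alternative induction proof via $\sum_{j=1}^{n-1}\frac{1}{j(n-j)}=\frac{2H_{n-1}}{n}$, which your argument does not need.
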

\begin{proof}
	This follows from combining the power series of $\ln^2 (1-4x)$ with the geometric series $(1-4x)^{-1}$, extracting the coefficient of $x^n$,
	and comparing with \eqref{Sn_id}. A second proof can be given by induction on $n$ using the elementary identity
	$	\sum\limits_{j=1}^{n-1} \frac{1}{j(n-j)} = \frac{2H_{n-1}}{n}$.
\end{proof}
\begin{theorem}
	For integers $n$, $p\geq 0$,
	\begin{align*}
	\sum_{j=0}^n \binom {j+p}{j} \binom {2(n-j)}{n-j} 4^{j} H_{j} O_{n-j}& = \sum_{j=0}^n \binom {j+p}{j} \binom  {2(n-j)}{n-j} 4^j O_{n-j} H_{j+p} \\
	&\quad - \sum_{k=1}^p \frac{1}{k} \sum_{j=0}^n \binom {j+p-k}{j} \binom {2(n-j)}{n-j} 4^{j} O_{n-j}.
	\end{align*}
	In particular,
	\begin{equation*}
	H_{n+p} = H_n + \frac{1}{\binom {n+p}{n}} \sum_{k=1}^p \frac{1}{k} \binom {n+p-k}{n}.
	\end{equation*}
\end{theorem}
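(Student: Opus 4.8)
The plan is to read the three sums in the theorem as Cauchy convolutions against Chen's sequence. Indeed, each of the three sums is the coefficient of $x^n$ in the product of $\sum_{m=0}^\infty\binom{2m}{m}O_mx^m$ (equation~\eqref{main_id1}) with, respectively, $\sum_{j=0}^\infty\binom{j+p}{j}4^jH_jx^j$, $\sum_{j=0}^\infty\binom{j+p}{j}4^jH_{j+p}x^j$, and $\sum_{j=0}^\infty\binom{j+p-k}{j}4^jx^j$. Consequently, by Cauchy's product rule the whole theorem follows from the single generating-function identity
\begin{equation*}
\sum_{n=0}^\infty\binom{n+p}{n}H_nx^n = \sum_{n=0}^\infty\binom{n+p}{n}H_{n+p}x^n - \sum_{k=1}^p\frac1k\sum_{n=0}^\infty\binom{n+p-k}{n}x^n,
\end{equation*}
while the ``in particular'' assertion is precisely the coefficient of $x^n$ in this identity (the weight $4^n$ having been suppressed). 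Thus I only need to prove this one power-series identity.

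Two of the three series are immediate. As in the proof of Theorem~\ref{main_thm2}, differentiating $(1-x)^{-\lambda}=\sum_n\binom{n+p}{n}x^n$ with respect to $\lambda$ at $\lambda=p+1$ and using $\psi(m)=-\gamma+H_{m-1}$ gives $\sum_n\binom{n+p}{n}H_{n+p}x^n=(H_p-\ln(1-x))(1-x)^{-(p+1)}$, and the binomial series gives $\sum_n\binom{n+p-k}{n}x^n=(1-x)^{-(p-k+1)}$ for $1\le k\le p$. The remaining and decisive ingredient is the generating function of $\binom{n+p}{n}H_n$. For this I would use the harmonic-number transform $\sum_{n\ge1}a_nH_nx^n=\int_0^1\frac{A(x)-A(xt)}{1-t}\,dt$, valid because $H_n=\int_0^1\frac{1-t^n}{1-t}\,dt$, applied to $A(x)=(1-x)^{-(p+1)}$.

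The transform integral is evaluated by the substitution $u=1-xt$, which turns it into $\int_{1-x}^1\frac{a^{-(p+1)}-u^{-(p+1)}}{u-a}\,du$ with $a=1-x$; the integrand collapses to the finite sum $\sum_{i=0}^pu^{i-p-1}a^{-i-1}$, and term-by-term integration (the index $i=p$ producing the logarithm, the others powers of $1-x$) yields
\begin{equation*}
\sum_{n=0}^\infty\binom{n+p}{n}H_nx^n = \frac{H_p-\ln(1-x)}{(1-x)^{p+1}} - \sum_{k=1}^p\frac{1}{k(1-x)^{p-k+1}}.
\end{equation*}
Subtracting this from the closed form for $\sum_n\binom{n+p}{n}H_{n+p}x^n$ leaves exactly $\sum_{k=1}^p\frac1k(1-x)^{-(p-k+1)}$, which is the required power-series identity. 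I expect the evaluation of $\sum_n\binom{n+p}{n}H_nx^n$ to be the only genuine obstacle; the convolution against \eqref{main_id1} and the coefficient extraction are then routine, and if one prefers to bypass the integral the ``in particular'' identity can instead be proved directly by induction on $p$ via Pascal's rule.
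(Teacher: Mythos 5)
Your proposal is correct and takes essentially the same route as the paper: the paper's entire proof consists of citing the generating function \eqref{eq.rqnwwad}, namely $\sum_{n=0}^\infty \binom{n+p}{n}H_n x^n = \frac{1}{(1-x)^{p+1}}\big(H_p - \ln(1-x) - \sum_{k=1}^p \frac{(1-x)^k}{k}\big)$, from Boyadzhiev--Frontczak and implicitly convolving it against Chen's series \eqref{main_id1} and comparing coefficients of $x^n$, which is exactly your reduction (including reading the ``in particular'' identity off the underlying power-series identity). The only difference is that you derive this key generating function from scratch via the harmonic-number transform $\sum_{n\ge 1}a_nH_nx^n=\int_0^1\frac{A(x)-A(xt)}{1-t}\,dt$ applied to $A(x)=(1-x)^{-(p+1)}$ --- and your evaluation of that integral is correct --- whereas the paper merely cites it, so your argument is self-contained where the paper's is not.
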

\begin{proof}
	Work with the generating function \cite[Theorem 3]{Boyadzhiev2}
	\begin{equation}\label{eq.rqnwwad}
	\sum_{n=0}^\infty \binom {n+p}{n} H_n x^n = \frac{1}{(1-x)^{p+1}}\left ( H_p - \ln(1-x) - \sum_{k=1}^p \frac{(1-x)^k}{k}\right ), \quad |x|<1.
	\end{equation}
\end{proof}

Our final combinatorial identity involves harmonic numbers of second order $H_n^{(2)}$, defined by  $H_n^{(2)}=\sum\limits_{k=1}^n \frac{1}{k^2}$ with $H_0^{(2)}=0$.
\begin{corollary}
	For integers $n\geq 0$, 
	\begin{align*}
	\sum_{j=0}^n \binom {2j}{j} 4^{n-j} O_j H_{n-j}  &= 2^{2n-1} (n+1) \Big ( (1-H_{n+1})^2 + 1-H_{n+1}^{(2)}\Big) \\
	&\quad - \sum_{j=1}^n C_{n-j} 4^{j-1} j \Big ( (1-H_{j})^2 + 1-H_{j}^{(2)}\Big ).
	\end{align*}
\end{corollary}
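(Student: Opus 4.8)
The plan is to recognize the left-hand side as a Cauchy product of two generating functions already available in the excerpt. Indeed, $\sum_{j=0}^n \binom{2j}{j} 4^{n-j} O_j H_{n-j}$ is the coefficient of $x^n$ in the product of $\sum_{n\geq0}\binom{2n}{n}O_n x^n = -\frac{\ln(1-4x)}{2\sqrt{1-4x}}$ from \eqref{main_id1} and $\sum_{n\geq0} 4^n H_n x^n = -\frac{\ln(1-4x)}{1-4x}$, the latter being the expansion used in the proof of \eqref{cor_id1}. Multiplying, I would obtain the single generating function
\[
\sum_{n=0}^\infty \Big(\sum_{j=0}^n \binom{2j}{j} 4^{n-j} O_j H_{n-j}\Big) x^n = \frac{\ln^2(1-4x)}{2(1-4x)^{3/2}}.
\]

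First I would factor this as $\frac12\sqrt{1-4x}\cdot\frac{\ln^2(1-4x)}{(1-4x)^2}$, mirroring the device used for \eqref{cor_id1}, where $\frac12\sqrt{1-4x} = \frac12 - \sum_{n\geq0} C_n x^{n+1}$. The new ingredient is the power series of $\frac{\ln^2(1-4x)}{(1-4x)^2}$, and to get it I would extend the $\lambda$-differentiation trick from the proof of Theorem \ref{main_thm2} one order further. Since $\frac{\partial^2}{\partial\lambda^2}(1-x)^{-\lambda} = \ln^2(1-x)(1-x)^{-\lambda}$ and $\frac{d}{d\lambda}(\lambda)_n = (\lambda)_n(\psi(\lambda+n)-\psi(\lambda))$, differentiating $\sum_n \frac{(\lambda)_n}{n!}x^n$ twice produces the coefficient $\frac{(\lambda)_n}{n!}\big[(\psi(\lambda+n)-\psi(\lambda))^2 + \psi'(\lambda+n) - \psi'(\lambda)\big]$. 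Setting $\lambda=2$, using $\frac{(2)_n}{n!}=n+1$, $\psi(2+n)-\psi(2)=H_{n+1}-1$, and $\psi'(\lambda+n)-\psi'(\lambda)=-\sum_{k=0}^{n-1}(\lambda+k)^{-2}$ which at $\lambda=2$ equals $-(H_{n+1}^{(2)}-1)$, and finally replacing $x$ by $4x$, I would arrive at
\[
\frac{\ln^2(1-4x)}{(1-4x)^2} = \sum_{n=0}^\infty 4^n (n+1)\big((1-H_{n+1})^2 + 1 - H_{n+1}^{(2)}\big) x^n.
\]

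Then I would apply Cauchy's product rule to the two factors. The $x^0$ coefficient of $\frac12\sqrt{1-4x}$ contributes $\frac12\cdot 4^n(n+1)\big((1-H_{n+1})^2+1-H_{n+1}^{(2)}\big) = 2^{2n-1}(n+1)\big((1-H_{n+1})^2+1-H_{n+1}^{(2)}\big)$, the leading term on the right. The remaining terms, coming from $-\sum_{m\geq1}C_{m-1}x^m$, yield $-\sum_{m=1}^n C_{m-1}4^{n-m}(n-m+1)\big((1-H_{n-m+1})^2+1-H_{n-m+1}^{(2)}\big)$; after the substitution $j=n-m+1$ this collapses to the stated Catalan sum $\sum_{j=1}^n C_{n-j}4^{j-1}j\big((1-H_j)^2+1-H_j^{(2)}\big)$, completing the proof. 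I expect the only real obstacle to be the second $\lambda$-derivative, specifically the trigamma contribution $\psi'(\lambda+n)-\psi'(\lambda)$ responsible for the $1-H_{n+1}^{(2)}$ piece; the rest is routine bookkeeping with Cauchy products and a single reindexing, of exactly the kind already carried out in the earlier corollaries.
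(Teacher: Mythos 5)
Your proof is correct, and its skeleton is exactly the paper's: identify the left-hand side as the Cauchy product of \eqref{main_id1} with $\sum_{n\ge 0} 4^n H_n x^n = -\ln(1-4x)/(1-4x)$, factor the resulting function as $\tfrac12\sqrt{1-4x}\cdot \ln^2(1-4x)/(1-4x)^2$, expand $\tfrac12\sqrt{1-4x}=\tfrac12-\sum_{n\ge0}C_nx^{n+1}$, and reindex. The one place you genuinely diverge is the key ingredient, namely the expansion
\begin{equation*}
\frac{\ln^2(1-4x)}{(1-4x)^2}=\sum_{n=0}^\infty 4^n(n+1)\Big((1-H_{n+1})^2+1-H_{n+1}^{(2)}\Big)x^n.
\end{equation*}
The paper obtains this by viewing the left side as $\big(\sum_{n\ge0} 4^nH_nx^n\big)^2$, whose coefficients are $4^n\sum_{j=0}^nH_jH_{n-j}$, and then invoking the convolution identity of Spie{\ss} \cite{Spiess}, $\sum_{j=0}^n H_jH_{n-j}=(n+1)\big((1-H_{n+1})^2+1-H_{n+1}^{(2)}\big)$; your proof never touches that identity and instead differentiates the binomial series $(1-x)^{-\lambda}=\sum_{n\ge0}\frac{(\lambda)_n}{n!}x^n$ twice in $\lambda$ and sets $\lambda=2$, which is a correct computation (the pieces $\frac{(2)_n}{n!}=n+1$, $\psi(2+n)-\psi(2)=H_{n+1}-1$, and $\psi'(2+n)-\psi'(2)=1-H_{n+1}^{(2)}$ all check out). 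What each buys: the paper's route is a two-line proof given the external reference, while yours is self-contained, extends the $\lambda$-differentiation device the paper itself uses to prove Theorem \ref{main_thm2}, and, when read against the Cauchy-square interpretation, furnishes an independent proof of the Spie{\ss} identity as a byproduct.
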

\begin{proof}
	Work with the relation
	\begin{equation*}
	\sum_{n=0}^\infty 4^n H_n x^n \cdot \sum_{n=0}^\infty \binom {2n}{n} O_n x^n
	= \frac{1}{2} \sqrt{1-4x} \cdot\Big ( - \frac{\ln (1-4x)}{1-4x} \Big )^2
	\end{equation*}
	and use the convolution identity \cite{Spiess} $$\sum\limits_{j=0}^n H_j H_{n-j} = (n+1) \left((1-H_{n+1})^2 + 1-H_{n+1}^{(2)}\right).
	$$
\end{proof}

\section{Two additional combinatorial identities derived from the integral}

The goal of this section is to derive two additional combinatorial identities using the integral in Lemma \ref{fund_lem}.
These identities are stated in Theorem \ref{main_thm5}. To prove this theorem, two lemmas will be needed.
\begin{lemma}\label{temp_lem1}
	For integers $n\geq 0$,
	\begin{equation*}
	\frac{d^n}{da^n} \frac{\ln a}{a} = (-1)^n n! \,\frac{ \ln a - H_n}{a^{n+1}} .
	\end{equation*}
\end{lemma}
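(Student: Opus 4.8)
The plan is to prove the identity by induction on $n$. The base case $n=0$ is immediate: the right-hand side reduces to $(\ln a - H_0)/a = (\ln a)/a$ since $H_0 = 0$, which matches the left-hand side with no differentiation performed.

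For the inductive step, I would assume the formula holds for some fixed $n \geq 0$ and differentiate both sides once more with respect to $a$. Applying $d/da$ to the right-hand side, the constant factor $(-1)^n n!$ is pulled out, and the quotient $(\ln a - H_n)/a^{n+1}$ is differentiated by the quotient rule (or equivalently by the product rule after writing it as $(\ln a - H_n)\,a^{-(n+1)}$). A short computation yields
\begin{equation*}
\frac{d}{da}\,\frac{\ln a - H_n}{a^{n+1}} = \frac{1 - (n+1)(\ln a - H_n)}{a^{n+2}}.
\end{equation*}

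The crucial step --- and essentially the only place any care is needed --- is to reorganize this numerator. Factoring out $-(n+1)$ gives $-(n+1)\bigl(\ln a - H_n - \tfrac{1}{n+1}\bigr)$, and here I would invoke the harmonic-number recurrence $H_{n+1} = H_n + \tfrac{1}{n+1}$ to collapse the bracket into $\ln a - H_{n+1}$. Reassembling with the prefactor, the extra $-(n+1)$ combines with $(-1)^n n!$ to produce $(-1)^{n+1}(n+1)!$, while the exponent on $a$ advances from $n+1$ to $n+2$; this is exactly the asserted formula with $n$ replaced by $n+1$, completing the induction. I do not anticipate any genuine obstacle: the argument is a routine differentiation whose single subtlety is recognizing that the stray term $\tfrac{1}{n+1}$ appearing in the numerator is precisely what is required to upgrade $H_n$ to $H_{n+1}$. (One could alternatively expand $\tfrac{d^n}{da^n}$ directly via the general Leibniz rule applied to the product $\ln a \cdot a^{-1}$, but the inductive route above is cleaner and avoids reassembling a telescoping harmonic sum by hand.)
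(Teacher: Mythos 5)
Your induction is correct and complete: the base case is trivial, the differentiation in the inductive step is computed accurately, and the key observation that the stray $\tfrac{1}{n+1}$ upgrades $H_n$ to $H_{n+1}$ via the recurrence $H_{n+1}=H_n+\tfrac{1}{n+1}$ is exactly right. However, your route differs from the paper's, which uses precisely the alternative you mention in passing: the general Leibniz rule applied to the product $\ln a\cdot a^{-1}$, together with the closed forms $\frac{d^k}{da^k}\ln a=(-1)^{k-1}(k-1)!\,a^{-k}$ and $\frac{d^k}{da^k}a^{-1}=(-1)^{k}k!\,a^{-(k+1)}$. In that computation the harmonic number appears all at once, since the $k$-th term of the Leibniz sum simplifies as
\begin{equation*}
\binom{n}{k}(-1)^{k-1}(k-1)!\,(-1)^{n-k}(n-k)!\;=\;(-1)^{n-1}\,\frac{n!}{k},
\end{equation*}
so that summing over $k=1,\dots,n$ produces $-(-1)^n n!\,H_n$ directly, while the $k=0$ term supplies $(-1)^n n!\ln a$. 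The trade-off is what you identified: the Leibniz route is a single non-iterative calculation but requires assembling the telescoped sum $\sum_{k=1}^n \tfrac1k$ from binomial-factorial cancellation, whereas your induction trades that bookkeeping for a one-step differentiation, at the cost of needing the target formula in hand before starting (harmless here, since the lemma states it). Both proofs are sound and of comparable length; yours is arguably the more self-contained verification, the paper's the more constructive derivation.
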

\begin{proof}
	This follows easily upon applying Leibniz rule for derivatives in conjunction with
	\begin{equation*}
	\frac{d^n}{da^n} \ln a = (-1)^{n-1} (n-1)!\, a^{-n}\quad \text{and}\quad \frac{d^n}{da^n} a^{-1} = (-1)^{n} n! \,a^{-(n+1)}.
	\end{equation*}
\end{proof}
\begin{lemma}\label{temp_lem2}
	For integers $n\geq 0$, 
	\begin{equation*}
	\frac{d^n}{da^n} \frac{1}{x^2+a^2}
	= \sum_{k=0}^{\lfloor (n+1)/2 \rfloor} (-1)^{n-k} (2a)^{n-2k} \frac{(n-k)!}{k!} \frac{(n+1-2k)_{2k}}{(x^2+a^2)^{n+1-k}},
	\end{equation*}
	where $(\lambda)_n$ is the Pochhammer symbol.
\end{lemma}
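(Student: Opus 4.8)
The plan is to prove the identity by induction on $n$. Throughout I abbreviate $u = x^2 + a^2$ and write the asserted $n$-th derivative as
\begin{equation*}
\frac{d^n}{da^n}\, u^{-1} = \sum_{k\ge 0} c_{n,k}\,(2a)^{n-2k}\, u^{-(n+1-k)}, \qquad c_{n,k} = (-1)^{n-k}\frac{(n-k)!}{k!}\,(n+1-2k)_{2k}.
\end{equation*}
Since $(n+1-2k)_{2k}$ is a product of $2k$ consecutive integers ending at $n$, it contains the factor $0$ as soon as $k > \lfloor (n+1)/2\rfloor$; hence $c_{n,k}=0$ there, and the displayed series is exactly the finite sum stated in the lemma. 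Carrying the formally infinite sum over all $k\ge 0$ lets me avoid fussing over the upper endpoint during the reindexing. The base case $n=0$ is immediate: only $k=0$ survives, $(1)_0=1$, and the right-hand side collapses to $u^{-1}$.

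For the inductive step I assume the formula for $n$ and differentiate term by term, using $u' = 2a$. The product rule splits $\frac{d}{da}\big(c_{n,k}(2a)^{n-2k}u^{-(n+1-k)}\big)$ into a first piece $2(n-2k)\,c_{n,k}(2a)^{n-2k-1}u^{-(n+1-k)}$ and a second piece $-(n+1-k)\,c_{n,k}(2a)^{n-2k+1}u^{-(n+2-k)}$. I then regroup the entire sum by the resulting monomial: the coefficient of $(2a)^{n+1-2k}u^{-(n+2-k)}$ receives exactly two contributions, the second piece of the index-$k$ term and the first piece of the index-$(k-1)$ term (the latter reached by the shift $k\mapsto k+1$). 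Matching this combined coefficient against $c_{n+1,k}$ is the heart of the argument.

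After factoring out the common $(-1)^{n+1-k}(2a)^{n+1-2k}u^{-(n+2-k)}$ and the ratio $\frac{(n+1-k)!}{k!}$ (using $(n+1-k)(n-k)!=(n+1-k)!$ and $\tfrac{1}{(k-1)!}=\tfrac{k}{k!}$), the desired coefficient equality reduces to the purely combinatorial statement
\begin{equation*}
(n+1-2k)_{2k} + 2k\,(n+2-2k)\,(n+3-2k)_{2k-2} = (n+2-2k)_{2k}.
\end{equation*}
I would verify this by rewriting each Pochhammer symbol as a ratio of factorials, namely $(n+1-2k)_{2k}=\frac{n!}{(n-2k)!}$, $(n+3-2k)_{2k-2}=\frac{n!}{(n+2-2k)!}$, and $(n+2-2k)_{2k}=\frac{(n+1)!}{(n+1-2k)!}$; placing everything over the common denominator $(n+1-2k)!$ collapses the left-hand side to $n!\big[(n+1-2k)+2k\big]=n!(n+1)=(n+1)!$, which is precisely the numerator on the right. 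The two endpoints are automatic: at $k=0$ the missing index-$(-1)$ contribution is harmless because its coefficient carries the factor $2k=0$, and the top of the range needs no separate handling since $c_{n+1,k}$ already vanishes once the Pochhammer symbol acquires a zero factor.

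I expect the \emph{main obstacle} to be the bookkeeping in regrouping the differentiated series: keeping the shift $k\mapsto k+1$ straight, tracking the powers of $2a$ and of $u$ so that both pieces genuinely land on the same monomial $(2a)^{n+1-2k}u^{-(n+2-k)}$, and confirming that the $(2a)^{-1}$-type terms produced formally at the boundary are truly zero—which they are, thanks to the vanishing Pochhammer factor. Once the terms are correctly aligned, the residual factorial identity is short and routine, so the verification above completes the induction.
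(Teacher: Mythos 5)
Your proof is correct and takes essentially the same route as the paper: an induction on $n$ in which the claimed formula is differentiated once more and coefficients of like monomials are matched. The paper organizes this via a recurrence for the numerator polynomial $Q_n(a,x)$ and asserts the verification ``with little effort,'' while your reduction to the Pochhammer identity $(n+1-2k)_{2k} + 2k\,(n+2-2k)\,(n+3-2k)_{2k-2} = (n+2-2k)_{2k}$ is precisely that omitted check, carried out explicitly and with the boundary terms handled correctly.
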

\begin{proof}
	The first two expressions
	\begin{equation*}\frac{d}{da} \frac{1}{x^2+a^2} = \frac{-2a}{(x^2+a^2)^2}\quad\text{and}\quad \frac{d^2}{da^2} \frac{1}{x^2+a^2} = \frac{6a^2-2x^2}{(x^2+a^2)^3}
	\end{equation*}	
	indicate that we can assume that the $n$th derivative can be written in the form
	\begin{equation*}
	\frac{d^n}{da^n} \frac{1}{x^2+a^2} = \frac{Q_n(a,x)}{(x^2+a^2)^{n+1}},
	\end{equation*}
	where $Q_n(a,x)$ is a polynomial in $a$ of degree $n$. Then
	\begin{equation*}
	\frac{d^{n+1}}{da^{n+1}} \frac{1}{x^2+a^2} = \frac{Q_{n+1}(a,x)}{(x^2+a^2)^{n+2}}
	\end{equation*}
	with $$Q_{n+1}(a,x) = \frac{d}{da} Q_n(a,x) (a^2+x^2) - 2(n+1)a Q_n(a,x)$$ and $Q_0(a,x)=1$.
	One checks with little effort that the polynomial
	\begin{equation*}
	Q_n(a,x) = \sum_{k=0}^n (-1)^{n-k} (2a)^{n-2k}  \frac{(n-k)!}{k!} (n+1-2k)_{2k} (a^2+x^2)^k
	\end{equation*}
	satisfies these conditions. Finally, as
	$(n+1-2k)_{2k} = 0$ for $\lceil (n+1)/2 \rceil \leq k \leq n,$
	the result follows.
\end{proof}
\begin{theorem}\label{main_thm5}
	Let $(\lambda)_n$ be the Pochhammer symbol. Then for integers $n\geq 0$, we have
	\begin{align*}
	\sum_{k=0}^{\lfloor (n+1)/2 \rfloor} (-1)^k \frac{\binom {2(n-k)}{n-k}}{\binom {n}{k}} \frac{(n+1-2k)_{2k}}{(k!)^2} &= 2^n,\\
	\sum_{k=0}^{\lfloor (n+1)/2 \rfloor} (-1)^k \frac{\binom {2(n-k)}{n-k}}{\binom {n}{k}} \frac{(n+1-2k)_{2k}}{(k!)^2} O_{n-k} &= 2^n H_n.
	\end{align*}
\end{theorem}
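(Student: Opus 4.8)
The plan is to differentiate the base integral
$$\int_0^\infty \frac{\ln x}{a^2+x^2}\,dx = \frac{\pi \ln a}{2a},$$
recorded in the proof of Lemma \ref{lem2}, exactly $n$ times with respect to the parameter $a$, and then to read off the two claimed identities by separating the result into its $\ln a$-part and its constant part. The three preliminary lemmas are tailored precisely for the two sides of this differentiation.

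First I would differentiate under the integral sign, reducing the left-hand side to $\int_0^\infty \ln x\,\frac{d^n}{da^n}\frac{1}{a^2+x^2}\,dx$; this is legitimate since the integrand and all its $a$-derivatives are continuous on $(0,\infty)$ and decay like a negative power of $x$ at infinity. Inserting the closed form from Lemma \ref{temp_lem2} and integrating term by term with Lemma \ref{fund_lem} (applied with $n-k$ in place of $n$) turns each summand into a multiple of $\pi\binom{2(n-k)}{n-k}(\ln a - O_{n-k})(2a)^{-2(n-k)-1}$. The powers of $2a$ collapse, since $(2a)^{n-2k}\cdot(2a)^{-2(n-k)-1}=(2a)^{-n-1}$ independently of $k$, so the whole left-hand side becomes
$$\frac{\pi}{(2a)^{n+1}}\sum_{k=0}^{\lfloor(n+1)/2\rfloor}(-1)^{n-k}\frac{(n-k)!}{k!}(n+1-2k)_{2k}\binom{2(n-k)}{n-k}\big(\ln a-O_{n-k}\big).$$
For the right-hand side I would invoke Lemma \ref{temp_lem1} to obtain $\frac{\pi}{2}\frac{d^n}{da^n}\frac{\ln a}{a}=\frac{\pi}{2}(-1)^n n!\,\dfrac{\ln a-H_n}{a^{n+1}}$.

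Next I would equate the two sides, multiply through by $(2a)^{n+1}/\pi$, and cancel the common sign $(-1)^n$ (writing $(-1)^{n-k}=(-1)^n(-1)^k$), arriving at the single parameter identity
$$\sum_{k=0}^{\lfloor(n+1)/2\rfloor}(-1)^{k}\frac{(n-k)!}{k!}(n+1-2k)_{2k}\binom{2(n-k)}{n-k}\big(\ln a-O_{n-k}\big)=n!\,2^n\big(\ln a-H_n\big),$$
valid for all $a>0$. Since $\ln a$ ranges over all of $\mathbb{R}$ while the coefficients are constants, the functions $1$ and $\ln a$ are linearly independent, so I may match the coefficient of $\ln a$ and the constant term separately: the $\ln a$-coefficients give the sum equal to $n!\,2^n$, and the constant terms give the same sum weighted by $O_{n-k}$ equal to $n!\,2^n H_n$. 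Finally, dividing both identities by $n!$ and using $\frac{1}{n!}\frac{(n-k)!}{k!}=\frac{1}{\binom{n}{k}(k!)^2}$ recasts them in exactly the stated binomial form.

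The only genuinely delicate point is the justification of the repeated differentiation under the integral sign; once that is secured, everything else is bookkeeping of factorials and powers of $2a$, together with the elementary observation that a linear relation in $\ln a$ holding for all $a>0$ forces its two coefficients to agree separately.
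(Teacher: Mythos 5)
Your proof is correct and takes essentially the same route as the paper: differentiate the $n=0$ case of Lemma \ref{fund_lem} exactly $n$ times in $a$, use Lemma \ref{temp_lem2} under the integral and Lemma \ref{fund_lem} termwise on the left, Lemma \ref{temp_lem1} on the right, and then split the resulting identity into its $\ln a$-coefficient and constant part. The paper compresses this into ``obvious simplifications''; you have merely made explicit the power-of-$(2a)$ collapse, the factorial bookkeeping, the linear-independence argument for $1$ and $\ln a$, and the justification of differentiation under the integral sign.
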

\begin{proof}
	Consider \eqref{lem_eq} for $n=0$, i.e.,
	$	\int\limits_0^\infty \frac{\ln x}{x^2+a^2}\, dx = \frac{\pi}{2}\frac{\ln a}{a}$.
	Now, differentiating both sides of the equation $n$ times with respect to $a$ while using Lemmas \ref{temp_lem1} and  \ref{temp_lem2} produces
	\begin{align*}
	&\sum_{k=0}^{\lfloor (n+1)/2 \rfloor} (-1)^{n-k} (2a)^{n-2k} \frac{(n-k)!}{k!} (n+1-2k)_{2k}
	\int_0^\infty \frac{\ln x}{(x^2+a^2)^{n+1-k}}\,dx \\
	&\qquad= \sum_{k=0}^{\lfloor (n+1)/2 \rfloor} (-1)^{n-k} (2a)^{n-2k} \frac{(n-k)!}{k!} (n+1-2k)_{2k}
	\binom {2(n-k)}{n-k} \frac{\pi (\ln a-O_{n-k})}{(2a)^{2(n-k)+1}} \\
	&\qquad= (-1)^n \frac{\pi n!}{2a^{n+1}} \left ( \ln a - H_n\right ).
	\end{align*}
	This gives after some obvious simplifications
	\begin{equation*}
	\sum_{k=0}^{\lfloor (n+1)/2 \rfloor} (-1)^k \frac{\binom {2(n-k)}{n-k}}{\binom {n}{k}} \frac{(n+1-2k)_{2k}}{(k!)^2}
	\left (\ln a - O_{n-k}\right ) = 2^n \left (\ln a - H_n\right ),
	\end{equation*}
	and this completes the proof.
\end{proof}

\section{Some Fibonacci series identities}

Next we present selected series evaluations involving products of Fibonacci (Lucas) numbers and odd harmonic numbers.

Generating function \eqref{chen_id} when evaluated at $x=s\alpha$ and $x=s\beta$ with $s<\frac{1}{4\alpha}\approx 0.155$ yields:
\begin{align*}
\sum_{n=1}^\infty \binom {2n}{n} s^n O_n L_{n+t} &= -\frac12\left( \frac{\alpha^t\ln(1-4s\alpha)  }{\sqrt{1-4s\alpha}}
+ \frac{\beta^t\ln(1-4s\beta)    }{\sqrt{1-4s\beta}}\right),\\
\sum_{n=1}^\infty \binom {2n}{n} s^n O_n F_{n+t} &= -\frac{\sqrt5}{10}\left( \frac{\alpha^t\ln(1-4s\alpha) }{\sqrt{1-4s\alpha}}
- \frac{\beta^t \ln(1-4s\beta)}{\sqrt{1-4s\beta}}\right),
\end{align*}
and, more generally,
\begin{align*}
\sum_{n=1}^\infty \binom {2n}{n}  s^n O_n G_{n+t}= -\frac{\sqrt5}{10}\left( \frac{(b-a\beta)\alpha^{t}}{\sqrt{1-4s\alpha}}\ln(1-4s\alpha)
-\frac{(b-a\alpha)\beta^{t}}{\sqrt{1-4s\beta}}\ln(1-4s\beta)\right)\!.
\end{align*}

This yields the next results, each of which presented as a separate theorem.
\begin{theorem}
	If $t$ is any integer, then
	\begin{align}
	\sum_{n=1}^\infty \binom {2n}{n} \frac{O_n}{8^n} L_{n+t} &= \sqrt{2}  L_{t+1}\ln\alpha + \frac{\sqrt{10}\ln2}{2}F_{t+1} ,\nonumber\\
	\label{12}
	\sum_{n=1}^\infty \binom {2n}{n} \frac{O_n}{8^n} F_{n+t} &= \sqrt{2}  F_{t+1}\ln\alpha + \frac{\ln2}{\sqrt{10}} L_{t+1},
	\end{align}
	and, more generally,
	\begin{equation*}
	\sum_{n=1}^\infty \binom {2n}{n} \frac{O_n}{8^n} G_{n+t} = \sqrt{2}  G_{t+1} \ln\alpha+ \frac{\ln 2}{\sqrt{10}} (G_{t+2} + G_t).
	\end{equation*}
\end{theorem}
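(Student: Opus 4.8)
The plan is to obtain all three displays as the single specialization $s=\tfrac18$ of the general gibonacci series stated immediately before the theorem, noting that $\tfrac18=0.125<\tfrac{1}{4\alpha}\approx0.155$ so the evaluation is legitimate. The two named identities are then the instances $(a,b)=(2,1)$ and $(a,b)=(0,1)$ of the gibonacci result (equivalently, one may evaluate the displayed Lucas and Fibonacci series directly, the computation being identical), so I would carry out the gibonacci case and read off the rest.

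First I would simplify the two logarithm arguments. At $s=\tfrac18$ one has $1-4s\alpha=1-\tfrac{\alpha}{2}$ and $1-4s\beta=1-\tfrac{\beta}{2}$, and a direct check from $\alpha=\tfrac{1+\sqrt5}{2}$, $\beta=\tfrac{1-\sqrt5}{2}$ gives
\begin{equation*}
1-\tfrac{\alpha}{2}=\tfrac{3-\sqrt5}{4}=\tfrac{\beta^2}{2},\qquad 1-\tfrac{\beta}{2}=\tfrac{3+\sqrt5}{4}=\tfrac{\alpha^2}{2}.
\end{equation*}
Using $|\beta|=1/\alpha$, this yields $\ln(1-4s\alpha)=-2\ln\alpha-\ln2$ and $\ln(1-4s\beta)=2\ln\alpha-\ln2$, while $\sqrt{1-4s\alpha}=1/(\alpha\sqrt2)$ and $\sqrt{1-4s\beta}=\alpha/\sqrt2$. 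Inserting these, together with $1/\alpha=-\beta$, collapses each quotient $\ln(\cdot)/\sqrt{\cdot}$ to a clean expression of the form $\mp\sqrt2\,\alpha^{\pm1}(2\ln\alpha\pm\ln2)$.

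The remaining work is to substitute these closed forms into
\begin{equation*}
-\tfrac{\sqrt5}{10}\Bigl(\tfrac{(b-a\beta)\alpha^{t}}{\sqrt{1-4s\alpha}}\ln(1-4s\alpha)-\tfrac{(b-a\alpha)\beta^{t}}{\sqrt{1-4s\beta}}\ln(1-4s\beta)\Bigr)
\end{equation*}
and to group the terms by the factors $\ln\alpha$ and $\ln2$. The $\ln\alpha$ part produces the antisymmetric combination $(b-a\beta)\alpha^{t+1}-(b-a\alpha)\beta^{t+1}=\sqrt5\,G_{t+1}$, which is exactly the Binet-like formula \eqref{binet-gibo}. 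The step I expect to be the real crux is the $\ln2$ part, which instead produces the \emph{symmetric} combination $(b-a\beta)\alpha^{t+1}+(b-a\alpha)\beta^{t+1}$; this is not directly a gibonacci value, and to reach the stated right-hand side I must recognize it as $G_{t+2}+G_t$. I would establish this from \eqref{binet-gibo} using $\alpha^{2}+1=\sqrt5\,\alpha$ and $\beta^{2}+1=-\sqrt5\,\beta$, which give $\alpha^{t+2}+\alpha^{t}=\sqrt5\,\alpha^{t+1}$ and $\beta^{t+2}+\beta^{t}=-\sqrt5\,\beta^{t+1}$, whence $G_{t+2}+G_t=(b-a\beta)\alpha^{t+1}+(b-a\alpha)\beta^{t+1}$. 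After collecting, the prefactor $-\tfrac{\sqrt5}{10}\cdot\sqrt2$ combines with the $\sqrt5$ accompanying $G_{t+1}$ to give the coefficient $\sqrt2$ of $\ln\alpha$ and the coefficient $1/\sqrt{10}$ of $\ln2$, producing the general identity. Finally, specializing $(a,b)=(2,1)$ and $(0,1)$ and invoking $\alpha^n+\beta^n=L_n$, $\alpha^n-\beta^n=\sqrt5F_n$ reproduces the Lucas and Fibonacci displays, since $G_{t+2}+G_t$ becomes $L_{t+2}+L_t=5F_{t+1}$ and $F_{t+2}+F_t=L_{t+1}$, respectively.
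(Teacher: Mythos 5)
Your proposal is correct and is precisely the paper's intended argument: the theorem is stated as a direct consequence of the general gibonacci series displayed just before it, obtained by specializing $s=\tfrac18$, using $1-\tfrac{\alpha}{2}=\tfrac{\beta^2}{2}$, $1-\tfrac{\beta}{2}=\tfrac{\alpha^2}{2}$, and the Binet-type identities $(b-a\beta)\alpha^{t+1}-(b-a\alpha)\beta^{t+1}=\sqrt5\,G_{t+1}$ and $(b-a\beta)\alpha^{t+1}+(b-a\alpha)\beta^{t+1}=G_{t+2}+G_t$. All of your intermediate simplifications and the final coefficient bookkeeping check out, including the specializations $L_{t+2}+L_t=5F_{t+1}$ and $F_{t+2}+F_t=L_{t+1}$.
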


Note that \eqref{12} is a generalization of Chen \cite[Identity (7)]{Chen}. \begin{theorem}
	If $t$ is any integer, then
	\begin{align*}
	\sum_{n=1}^\infty (-1)^{n-1}\binom {2n}{n} \frac{O_n}{8^n} L_{n+t} &=\frac{\sqrt{2}}{2\sqrt{\alpha+2}}\left(\big(\alpha^{t+1}-L_{t-1}\big)  \ln\Big(\frac{\sqrt5}2\Big)+\big(\alpha^{t-2}+L_{t-1}\big)\ln\alpha\right)\!,\\
	\sum_{n=1}^\infty (-1)^{n-1}\binom {2n}{n} \frac{O_n}{8^n} F_{n+t} &=\frac{\sqrt{10}}{10\sqrt{\alpha+2}}\left(\big(\alpha^{t-2}+L_{t-1}\big)  \ln\Big(\frac{\sqrt5}2\Big)+\big(\alpha^{t+1}- L_{t-1}\big)\ln\alpha\right)\!,
	\end{align*}
	and, more generally,
	\begin{align*}
	\sum_{n=1}^\infty (-1)^{n-1}\binom {2n}{n} \frac{O_n}{8^n} G_{n+t} &=\frac{\sqrt{10}}{10\sqrt{\alpha+2}}\Big(\big(\alpha^{t-3}(b\alpha+a)+G_{t}+G_{t-2}\big)  \ln\Big(\frac{\sqrt5}2\Big)\\
	&\quad +\big(\alpha^{t}(b\alpha+a)- G_{t}-G_{t-2}\big)\ln\alpha\Big)\!.
	\end{align*}
\end{theorem}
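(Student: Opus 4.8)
The plan is to mirror the derivation of the preceding (non-alternating) theorem, but starting from the alternating generating function \eqref{main_id2} instead of \eqref{chen_id}. Writing $(-1)^{n-1} = -(-1)^n$ and $\frac{1}{8^n} = (1/8)^n$, and invoking the Binet formula $L_{n+t} = \alpha^t\alpha^n + \beta^t\beta^n$, I would split the Lucas sum as
\[
\sum_{n=1}^\infty (-1)^{n-1}\binom{2n}{n}\frac{O_n}{8^n}L_{n+t} = -\alpha^t\sum_{n=0}^\infty (-1)^n\binom{2n}{n}O_n\Big(\frac{\alpha}{8}\Big)^n - \beta^t\sum_{n=0}^\infty(-1)^n\binom{2n}{n}O_n\Big(\frac{\beta}{8}\Big)^n,
\]
the $n=0$ term vanishing because $O_0=0$, and likewise for $F_{n+t}$ and $G_{n+t}$ using \eqref{bine} and the Binet-like formula \eqref{binet-gibo}. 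Each inner series is then an instance of \eqref{main_id2} evaluated at $x=\alpha/8$ and $x=\beta/8$. I would first check that both arguments lie in the admissible interval $(-1/4,1/4]$: indeed $\alpha/8\approx0.202<1/4$ and $\beta/8\approx-0.077$, so the substitution and the termwise rearrangement are legitimate.

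The heart of the computation is the simplification of $-\frac{\ln(1+4x)}{2\sqrt{1+4x}}$ at the two points. Since $4\cdot(\alpha/8)=\alpha/2$, we have $1+4x = 1+\alpha/2 = (\alpha+2)/2$, and the golden-ratio identities $\alpha+2 = \sqrt5\,\alpha$ and $\beta+2 = -\sqrt5\,\beta$ (both immediate from $\alpha=(1+\sqrt5)/2$) give
\[
\ln\Big(1+\frac{\alpha}{2}\Big) = \ln\frac{\sqrt5\,\alpha}{2} = \ln\frac{\sqrt5}{2}+\ln\alpha, \qquad \ln\Big(1+\frac{\beta}{2}\Big) = \ln\frac{\sqrt5}{2}-\ln\alpha,
\]
where the second uses $-\beta = 1/\alpha$. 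For the radicals I would use $(\alpha+2)(\beta+2)=5$, so that $\sqrt{1+\alpha/2} = \sqrt{\alpha+2}/\sqrt2$ while $\sqrt{1+\beta/2} = \sqrt5/(\sqrt2\,\sqrt{\alpha+2})$; the stray factor $\sqrt{\alpha+2}/\sqrt5$ then collapses to $\alpha/\sqrt{\alpha+2}$ again via $\alpha+2=\sqrt5\,\alpha$. After these substitutions each identity reduces to a common prefactor $\frac{\sqrt2}{2\sqrt{\alpha+2}}$ (or $\frac{\sqrt{10}}{10\sqrt{\alpha+2}}$ once the $\sqrt5$ from a Binet denominator is absorbed) multiplying a combination of $\ln(\sqrt5/2)$ and $\ln\alpha$ whose coefficients are explicit combinations of $\alpha^t$, $\beta^t$ and the initial data $a,b$.

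The main obstacle — really the only nontrivial step — is recasting those raw coefficients into the stated closed forms. For the Lucas case this amounts to verifying $\alpha^t+\alpha\beta^t = \alpha^{t+1}-L_{t-1}$ and $\alpha^t-\alpha\beta^t = \alpha^{t-2}+L_{t-1}$; both follow by writing $\alpha\beta^t = -\beta^{t-1}$ (from $\alpha\beta=-1$), expanding $L_{t-1}=\alpha^{t-1}+\beta^{t-1}$, and using $\alpha^2-1=\alpha$ and $1+\alpha=\alpha^2$ to absorb the $\alpha$-powers. The general gibonacci statement is handled identically: after collecting the coefficient of $\ln(\sqrt5/2)$ as $(b-a\beta)\alpha^t+(a\alpha-b)\alpha\beta^t$, I would reduce it with $\alpha\beta=-1$ and recognize the result as $\alpha^{t-3}(b\alpha+a)+G_t+G_{t-2}$, the key being the relations $\alpha^t+\alpha^{t-2}=\sqrt5\,\alpha^{t-1}$ and $\beta^t+\beta^{t-2}=-\sqrt5\,\beta^{t-1}$, which via \eqref{binet-gibo} identify $G_t+G_{t-2}$ with $(b-a\beta)\alpha^{t-1}-(a\alpha-b)\beta^{t-1}$. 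The $\ln\alpha$ coefficient is treated the same way, and the $F$ and $L$ formulas are recovered as the cases $(a,b)=(0,1)$ and $(a,b)=(2,1)$, the latter most cleanly from the direct split $L_{n+t}=\alpha^{n+t}+\beta^{n+t}$ and using $F_t+F_{t-2}=L_{t-1}$.
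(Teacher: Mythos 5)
Your proposal is correct and follows essentially the same route as the paper: the paper obtains this theorem by evaluating the generating function \eqref{chen_id} at $x=s\alpha$ and $x=s\beta$ with $s=-1/8$ (equivalently, \eqref{main_id2} at $\alpha/8$ and $\beta/8$), combining via the Binet formulas, and simplifying with the golden-ratio identities $\alpha+2=\sqrt5\,\alpha$, $\beta+2=-\sqrt5\,\beta$, exactly as you do. Your verification that the raw coefficients $\alpha^t\pm\alpha\beta^t$ collapse to the stated forms $\alpha^{t+1}-L_{t-1}$ and $\alpha^{t-2}+L_{t-1}$ (and the gibonacci analogues) is sound and fills in the algebra the paper leaves implicit.
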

\begin{theorem}
	If $t$ is any integer, then
	\begin{align*}
	\sum_{n=1}^\infty \binom {2n}{n} \frac{O_n}{16^n} L_{n+t} &=\frac{1}{\sqrt{\alpha+2}}\left(\big(\alpha^{t+2}-L_{t}\big)  \ln\alpha- \big(\alpha^{t-1}+L_{t}\big)\ln\Big(\frac{\sqrt5}{4}\Big)\right),\\
	\sum_{n=1}^\infty \binom {2n}{n} \frac{O_n}{16^n} F_{n+t} &= \frac{\sqrt{5}}{5\sqrt{\alpha+2}}\left(\big(\alpha^{t-1}+L_{t}\big)  \ln\alpha-\big(\alpha^{t+2}- L_{t}\big)\ln\Big(\frac{\sqrt5}{4}\Big)\right)\!,
	\end{align*}
	and
	\begin{align*}
	\sum_{n=1}^\infty \binom {2n}{n} \frac{O_n}{16^n} G_{n+t} & =\frac{\sqrt{5}}{5\sqrt{\alpha+2}}\Bigg(\big(\alpha^{t-2}(b\alpha+a)+G_{t+1}+G_{t-1}\big)  \ln\alpha\\
	&\quad- \big(\alpha^{t+1}(b\alpha+a)- G_{t+1}-G_{t-1}\big)\ln\Big(\frac{\sqrt5}{4}\Big)\Bigg).
	\end{align*}
\end{theorem}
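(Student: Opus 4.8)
The plan is to derive all three displays as specializations of the master gibonacci generating function stated just above the theorems, so that the Lucas and Fibonacci cases become the instances $G=L$ and $G=F$ and differ only by whether one symmetrizes or antisymmetrizes the two golden-ratio branches. Concretely, I would apply Binet's formula \eqref{bine} to write $L_{2n+t}=\alpha^{t}\alpha^{2n}+\beta^{t}\beta^{2n}$ (and $\sqrt5\,F_{2n+t}=\alpha^{t}\alpha^{2n}-\beta^{t}\beta^{2n}$), so that after interchanging the finite Binet combination with the summation the left-hand side becomes a linear combination of $\sum_{n\ge1}\binom{2n}{n}O_n(\alpha^2/16)^n$ and $\sum_{n\ge1}\binom{2n}{n}O_n(\beta^2/16)^n$. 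Each of these is Chen's generating function \eqref{main_id1} evaluated at $x=\alpha^2/16$ and at $x=\beta^2/16$ respectively (the $n=0$ term is absent since $O_0=0$), so the whole problem collapses to simplifying $1-4x$ at these two points.

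The heart of the argument is the surd simplification, and this is the step I expect to be the main obstacle. Using $\alpha^2=\alpha+1$, $\beta^2=\beta+1$, $\alpha+\beta=1$, $\alpha\beta=-1$, $\alpha-\beta=\sqrt5$ and $\alpha^{-1}=-\beta$, I would establish $1-\frac{\alpha^2}{4}=\frac{3-\alpha}{4}=\frac{\sqrt5}{4\alpha}$ and $1-\frac{\beta^2}{4}=\frac{3-\beta}{4}=\frac{\sqrt5}{4}\alpha$, the last equalities resting on $3-\alpha=2+\beta=\sqrt5\,\alpha^{-1}$ and $3-\beta=2+\alpha=\sqrt5\,\alpha$. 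Taking logarithms gives $\ln(1-\alpha^2/4)=\ln(\sqrt5/4)-\ln\alpha$ and $\ln(1-\beta^2/4)=\ln(\sqrt5/4)+\ln\alpha$, which accounts for the two logarithms in the statement. Taking square roots and invoking $\sqrt5\,\alpha=\alpha+2$ yields $\sqrt{1-\beta^2/4}=\tfrac12\sqrt{\alpha+2}$ and $\sqrt{1-\alpha^2/4}=\tfrac{1}{2\alpha}\sqrt{\alpha+2}$; the two branches therefore share the factor $\sqrt{\alpha+2}$, with one extra power of $\alpha$ in the $\alpha$-branch. Keeping these square roots real and correctly normalized across the two branches is the delicate point.

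Finally I would substitute these closed forms into the two branches, factor out $1/\sqrt{\alpha+2}$, and collect the coefficients of $\ln\alpha$ and $\ln(\sqrt5/4)$. The Lucas case then produces the combinations $\alpha^{t+1}\mp\beta^{t}$, which I convert to the stated shape through the elementary identities $\alpha^{t+1}-\beta^{t}=\alpha^{t+2}-L_t$ and $\alpha^{t+1}+\beta^{t}=\alpha^{t-1}+L_t$, both immediate from $\alpha^2=\alpha+1$ and $L_t=\alpha^t+\beta^t$. The Fibonacci identity follows by using the antisymmetric Binet combination together with $\sqrt5\,F_t=\alpha^t-\beta^t$, which introduces the prefactor $\sqrt5/5$. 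For the gibonacci version I would repeat the computation with the Binet-like formula \eqref{binet-gibo} in place of \eqref{bine}: using $\alpha^{t+1}+\alpha^{t-1}=\sqrt5\,\alpha^t$ to produce $G_{t+1}+G_{t-1}=(b-a\beta)\alpha^t-(a\alpha-b)\beta^t$, and the relation $b-a\beta=(b\alpha+a)\alpha^{-1}$ (again from $\alpha^{-1}=-\beta$) together with $\alpha^2-\alpha=1$, the residual terms reorganize into the factor $b\alpha+a$ weighted by powers of $\alpha$, exactly as in the statement.
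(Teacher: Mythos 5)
Your algebra is correct, and your method---evaluating Chen's generating function \eqref{main_id1} at two conjugate points and recombining via the Binet formulas---is exactly the mechanism the paper uses throughout this section (it is encoded in the master formulas displayed just before these theorems). The surd simplifications $1-\alpha^2/4=\sqrt5/(4\alpha)$, $1-\beta^2/4=\sqrt5\,\alpha/4$, $\sqrt5\,\alpha=\alpha+2$, and the index conversions $\alpha^{t+1}-\beta^{t}=\alpha^{t+2}-L_t$, $\alpha^{t+1}+\beta^{t}=\alpha^{t-1}+L_t$, $G_{t+1}+G_{t-1}=(b-a\beta)\alpha^t-(a\alpha-b)\beta^t$ all check out.

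However, there is a mismatch you never acknowledge: the statement to be proved has $L_{n+t}$, $F_{n+t}$, $G_{n+t}$, while from your first line you work with $L_{2n+t}$, $F_{2n+t}$, $G_{2n+t}$, i.e., with evaluation points $x=\alpha^2/16$ and $x=\beta^2/16$ rather than $x=\alpha/16$ and $x=\beta/16$. These are genuinely different series. For the literal statement one must take $x=\alpha/16$, $\beta/16$ (the case $s=1/16$ of the paper's master formula), where $1-4x=(7\mp\sqrt5)/8$, which is not of the form $\alpha^{\pm1}\sqrt5/4$, so no combination of $\ln\alpha$ and $\ln(\sqrt5/4)$ can emerge; numerically, at $t=0$ the printed Lucas left-hand side is about $0.269$ while the printed right-hand side is about $0.957$. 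The printed right-hand sides are exactly the values of the $2n+t$ sums, so the theorem as printed contains a typo in its subscripts (compare the immediately following theorem, the alternating companion, which is correctly stated with $L_{2n+t}$), and what you proved is the corrected statement. Your proof is sound mathematics and coincides with the paper's intended derivation, but as a proof of the literal statement it does not apply; you should have flagged the index discrepancy (and the falsity of the printed version) rather than silently replacing $n+t$ by $2n+t$.
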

\begin{theorem}
	If $t$ is any integer, then 
	\begin{align*}
	\sum_{n=1}^\infty(-1)^{n-1}\binom {2n}{n} \frac{O_n}{16^n} L_{2n+t} &= \frac{\alpha^t}{\sqrt{\alpha+5}}\ln\Big(\frac{\alpha+5}{4}\Big)+\frac{\beta^t}{\sqrt{\beta+5}}\ln\Big(\frac{\beta+5}{4}\Big),\\
	\sum_{n=1}^\infty(-1)^{n-1}\binom {2n}{n} \frac{O_n}{16^n} F_{2n+t} &= \frac{1}{\sqrt5}\left(\frac{\alpha^t}{\sqrt{\alpha+5}}\ln\Big(\frac{\alpha+5}{4}\Big)-\frac{\beta^t}{\sqrt{\beta+5}}\ln\Big(\frac{\beta+5}{4}\Big)\right),
	\end{align*}
	and
	\begin{align*}
	\sum_{n=1}^\infty(-1)^{n-1}\binom {2n}{n} \frac{O_n}{16^n} G_{2n+t} = \frac{1}{\sqrt5}\left(\frac{(b-a\beta)\alpha^t}{\sqrt{\alpha+5}}\ln\Big(\frac{\alpha+5}{4}\Big)-\frac{(b-a\alpha)\beta^t}{\sqrt{\beta+5}}\ln\Big(\frac{\beta+5}{4}\Big)\right)\!.
	\end{align*}
\end{theorem}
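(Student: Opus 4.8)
The plan is to derive all three identities from the gibonacci case by substituting the Binet-type formula \eqref{binet-gibo} into the alternating generating function established earlier (the alternating companion of \eqref{main_id1}). First I would put that generating function into the form matching the sum at hand: since $O_0=0$ and $(-1)^{n-1}=-(-1)^n$, the identity $\sum_{n=0}^\infty(-1)^n\binom{2n}{n}O_n x^n=-\frac{\ln(1+4x)}{2\sqrt{1+4x}}$ is equivalent to
\[
S(x):=\sum_{n=1}^\infty(-1)^{n-1}\binom{2n}{n}O_n x^n=\frac{\ln(1+4x)}{2\sqrt{1+4x}},\qquad x\in\Big(\!-\tfrac14;\tfrac14\Big].
\]

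Next, using \eqref{binet-gibo} I would split $G_{2n+t}=\frac{(b-a\beta)\alpha^{t}(\alpha^2)^n-(b-a\alpha)\beta^{t}(\beta^2)^n}{\alpha-\beta}$, where $\alpha-\beta=\sqrt5$. The factor $16^{-n}$ is precisely what converts $\alpha^{2n}$ and $\beta^{2n}$ into geometric ratios $x=\alpha^2/16$ and $x=\beta^2/16$, so interchanging the finite linear combination with the (convergent) series gives
\[
\sum_{n=1}^\infty(-1)^{n-1}\binom{2n}{n}\frac{O_n}{16^n}G_{2n+t}
=\frac{(b-a\beta)\alpha^t}{\sqrt5}\,S\!\Big(\tfrac{\alpha^2}{16}\Big)-\frac{(b-a\alpha)\beta^t}{\sqrt5}\,S\!\Big(\tfrac{\beta^2}{16}\Big).
\]

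The key simplification is then purely algebraic. Since $\alpha^2=\alpha+1$ we have $1+4\cdot\tfrac{\alpha^2}{16}=1+\tfrac{\alpha^2}{4}=\tfrac{\alpha+5}{4}$, hence $\sqrt{1+4x}=\tfrac{\sqrt{\alpha+5}}{2}$ and $S(\alpha^2/16)=\frac{\ln((\alpha+5)/4)}{\sqrt{\alpha+5}}$; the identical computation with $\beta^2=\beta+1$ gives $S(\beta^2/16)=\frac{\ln((\beta+5)/4)}{\sqrt{\beta+5}}$. Substituting these two evaluations yields the general gibonacci identity. The Lucas and Fibonacci statements are then the specializations $(a,b)=(2,1)$ and $(a,b)=(0,1)$: in the Lucas case the prefactors $b-a\beta$ and $b-a\alpha$ reduce to $\sqrt5$ and $-\sqrt5$, and in the Fibonacci case both reduce to $1$, which combined with the overall $1/\sqrt5$ reproduces the stated closed forms consistent with \eqref{bine}.

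The one point that genuinely requires checking — rather than the algebra, which is routine — is that both evaluation points lie in the admissible interval $(-\tfrac14,\tfrac14]$ of the generating function, so that termwise evaluation is legitimate. Here $x=\alpha^2/16\approx0.164$ and $x=\beta^2/16\approx0.024$, both strictly less than $\tfrac14$ (the binding condition being $\alpha^2<4$, i.e.\ $\alpha<2$), so the two series converge absolutely and the splitting above is justified. I expect this convergence verification, together with the clean use of $\alpha^2=\alpha+1$ and $\beta^2=\beta+1$, to be the only steps needing any care.
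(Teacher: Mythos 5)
Your proposal is correct and follows essentially the same route as the paper: Section~4 proves all of these theorems by evaluating the generating function \eqref{main_id1} (equivalently its alternating companion \eqref{main_id2}, which you use) at the Binet-related points $x=\alpha^2/16$ and $x=\beta^2/16$, simplifying via $\alpha^2=\alpha+1$, $\beta^2=\beta+1$, and recombining with the Binet formulas \eqref{bine} and \eqref{binet-gibo}. Your explicit convergence check and the specialization of the gibonacci case to $(a,b)=(2,1)$ and $(0,1)$ match the paper's (largely implicit) argument.
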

\begin{theorem}
	If $t$ is any integer, then
	\begin{align*}
	\sum_{n=1}^\infty \binom {2n}{n} \frac{O_n L_{n+t}}{12^n} &=
	\frac{\sqrt{15}\sqrt{\alpha+2}}{10}\left((\alpha^t+\beta^{t+1})\ln\alpha-(\alpha^t-\beta^{t+1})\ln\Big(\frac{\sqrt5}{3}\Big)\right),\\
	\sum_{n=1}^\infty \binom {2n}{n}  \frac{O_n F_{n+t}}{12^n} &=
	\frac{\sqrt{3}\sqrt{\alpha+2}}{10}\left((\alpha^t-\beta^{t+1})\ln\alpha-(\alpha^t+\beta^{t+1})\ln\Big(\frac{\sqrt5}{3}\Big)\right).
	\end{align*}
	and
	\begin{align*}
	\sum_{n=1}^\infty \binom {2n}{n}  \frac{O_n G_{n+t}}{12^n} &=	\frac{\sqrt{3}\sqrt{\alpha+2}}{10}\Bigg(
	\big(a(\alpha^{t-1}-\beta^{t})+b(\alpha^{t}-\beta^{t+1})\big)\ln\alpha\\
	&\quad-\big(a(\alpha^{t-1}+\beta^{t})+b(\alpha^{t}+\beta^{t+1})\big)
	\ln\Big(\frac{\sqrt5}{3}\Big)\Bigg).
	\end{align*}
\end{theorem}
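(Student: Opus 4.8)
The plan is to specialize the three ``master'' evaluations recorded at the start of this section---those for $\sum_{n\geq1}\binom{2n}{n}s^nO_nL_{n+t}$, $\sum_{n\geq1}\binom{2n}{n}s^nO_nF_{n+t}$, and the gibonacci version $\sum_{n\geq1}\binom{2n}{n}s^nO_nG_{n+t}$---to the single value $s=\frac{1}{12}$, for which $s^n=12^{-n}$ and $4s=\frac13$. Since $\frac{1}{12}<\frac{1}{4\alpha}$, the convergence hypothesis of those formulas is met. Writing $A=\ln\frac{\sqrt5}{3}$ and $B=\ln\alpha$, everything then reduces to simplifying the two building blocks $\frac{\alpha^t\ln(1-4s\alpha)}{\sqrt{1-4s\alpha}}$ and $\frac{\beta^t\ln(1-4s\beta)}{\sqrt{1-4s\beta}}$ at $s=\frac{1}{12}$, and afterwards forming the sum combination (for $L$), the difference combination (for $F$), and the $(b-a\beta),(b-a\alpha)$-weighted combination (for $G$) dictated by the master formulas.

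The first substantive step is to collect the golden-ratio identities that make $s=\frac{1}{12}$ special. From $\alpha+\beta=1$ and $\alpha\beta=-1$ one has $3-\alpha=\beta+2$, $3-\beta=\alpha+2$, and $(3-\alpha)(3-\beta)=9-3(\alpha+\beta)+\alpha\beta=5$; moreover $\alpha+2=\sqrt5\,\alpha$ and, using $1/\alpha=-\beta$, also $3-\alpha=\beta+2=\sqrt5/\alpha$. Consequently
\begin{equation*}
1-4s\alpha=\frac{3-\alpha}{3}=\frac{\sqrt5}{3\alpha},\qquad 1-4s\beta=\frac{3-\beta}{3}=\frac{\sqrt5\,\alpha}{3},
\end{equation*}
so that $\ln(1-4s\alpha)=A-B$ and $\ln(1-4s\beta)=A+B$, while the radicals collapse to $\sqrt{1-4s\alpha}=\frac{\sqrt5}{\sqrt3\,\sqrt{\alpha+2}}$ and $\sqrt{1-4s\beta}=\frac{\sqrt{\alpha+2}}{\sqrt3}$, where I have used $\sqrt{\alpha+2}=\sqrt{\sqrt5\,\alpha}$ together with $\beta+2=5/(\alpha+2)$.

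Substituting these into the two blocks yields $\frac{\alpha^t\ln(1-4s\alpha)}{\sqrt{1-4s\alpha}}=\alpha^t(A-B)\,\frac{\sqrt3\,\sqrt{\alpha+2}}{\sqrt5}$ and $\frac{\beta^t\ln(1-4s\beta)}{\sqrt{1-4s\beta}}=\beta^t(A+B)\,\frac{\sqrt3}{\sqrt{\alpha+2}}$. I would then factor the common $\frac{\sqrt3\,\sqrt{\alpha+2}}{\sqrt5}$ out of both, rewriting the residual $\frac{1}{\sqrt{\alpha+2}}$ of the second block as $\frac{\sqrt{\alpha+2}}{\alpha+2}=\frac{\sqrt{\alpha+2}}{\sqrt5\,\alpha}$ and absorbing the leftover $\frac1\alpha$ via $1/\alpha=-\beta$. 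This turns $\beta^t/\alpha$ into $-\beta^{t+1}$ and is exactly the step that converts the half-integer powers of $\alpha$ into the clean integer powers $\alpha^t,\beta^{t+1}$ of the statement. The remaining constant $-\frac12\cdot\frac{\sqrt3}{\sqrt5}$ in the Lucas case becomes $-\frac{\sqrt{15}}{10}$, and $-\frac{\sqrt5}{10}\cdot\frac{\sqrt3}{\sqrt5}$ in the Fibonacci and gibonacci cases becomes $-\frac{\sqrt3\,\sqrt{\alpha+2}}{10}$, matching the stated prefactors once the outer sign is absorbed into the bracket.

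The final step is bookkeeping: collect the coefficients of $B=\ln\alpha$ and $A=\ln\frac{\sqrt5}{3}$. For the gibonacci formula one additionally expands $(b-a\beta)\alpha^t$ and $(b-a\alpha)\beta^{t+1}$ using $\beta\alpha^t=-\alpha^{t-1}$ and $\alpha\beta^{t+1}=-\beta^t$ (again from $\alpha\beta=-1$), giving $(b-a\beta)\alpha^t=a\alpha^{t-1}+b\alpha^t$ and $(b-a\alpha)\beta^{t+1}=a\beta^t+b\beta^{t+1}$, which assemble into the advertised coefficients $a(\alpha^{t-1}\mp\beta^t)+b(\alpha^t\mp\beta^{t+1})$; the $L$ and $F$ identities then follow either as the cases $(a,b)=(2,1)$ and $(a,b)=(0,1)$ or directly from the sum/difference structure. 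The derivation is uniform in $t\in\mathbb Z$, since the Binet-type representations hold for all integer subscripts. I expect no genuine difficulty here: the only points requiring care are tracking which radical lands in the numerator versus the denominator after factoring, and applying the single power shift $\beta^t\mapsto-\beta^{t+1}$ consistently, as a sign or off-by-one slip there is precisely what would break the match with the closed forms.
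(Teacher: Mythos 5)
Your proposal is correct and follows exactly the paper's (implicit) proof: the paper derives this theorem by specializing the master formulas at the start of Section 4 to $s=\tfrac{1}{12}$, which is precisely what you do, and your golden-ratio simplifications ($1-\tfrac{\alpha}{3}=\tfrac{\sqrt5}{3\alpha}$, $1-\tfrac{\beta}{3}=\tfrac{\sqrt5\,\alpha}{3}$, the power shift $\beta^t/\alpha=-\beta^{t+1}$) check out and reproduce the stated closed forms.
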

\begin{theorem}
	If $t$ is any integer, then
	\begin{align*}\label{61}
	\sum_{n=1}^\infty \binom {2n}{n} \frac{O_n L_{2n+t}}{12^n} &= \frac{\sqrt{3}}{2}\big( 2L_{t+1}\ln\alpha + \sqrt{5} F_{t+1}\ln 3\big),\\
	\sum_{n=1}^\infty \binom {2n}{n} \frac{O_n F_{2n+t}}{12^n} &= \frac{\sqrt{15}}{10}\big(2\sqrt{5} F_{t+1}\ln\alpha + L_{t+1}\ln 3\big),\\
	\sum_{n=1}^\infty \binom {2n}{n} \frac{O_n G_{2n+t}}{12^n} &= \frac{\sqrt{15}}{10}\left(2\sqrt{5}\, G_{t+1}\ln\alpha + (G_{t+2}+G_t)\ln 3\right)\!.
	\end{align*}
\end{theorem}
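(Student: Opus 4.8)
The plan is to reduce everything to Chen's generating function \eqref{main_id1} via the Binet formulas, exactly in the spirit of the preceding Fibonacci theorems, but evaluating at the two points $x=\alpha^2/12$ and $x=\beta^2/12$ rather than at $s\alpha$ and $s\beta$. First I would write $L_{2n+t}=\alpha^{2n+t}+\beta^{2n+t}=\alpha^t(\alpha^2)^n+\beta^t(\beta^2)^n$ and likewise $F_{2n+t}=\big(\alpha^t(\alpha^2)^n-\beta^t(\beta^2)^n\big)/\sqrt5$, so that
\[
\sum_{n=1}^\infty\binom{2n}{n}\frac{O_nL_{2n+t}}{12^n}=\alpha^t\sum_{n=1}^\infty\binom{2n}{n}O_n\Big(\tfrac{\alpha^2}{12}\Big)^n+\beta^t\sum_{n=1}^\infty\binom{2n}{n}O_n\Big(\tfrac{\beta^2}{12}\Big)^n,
\]
and analogously for the Fibonacci sum with a difference in place of the sum. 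Since $O_0=0$, each inner series is precisely \eqref{main_id1} evaluated at $x=\alpha^2/12$ and $x=\beta^2/12$; these lie in $[-1/4,1/4)$ because $\alpha^2/12\approx0.218$ and $\beta^2/12\approx0.032$, so convergence is guaranteed.

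The computational heart is the simplification of $1-4x$ at the two points. Using $\alpha^2=\alpha+1$, $\beta^2=\beta+1$ and $\alpha+\beta=1$, one checks the clean identities $1-\tfrac{\alpha^2}{3}=\tfrac{\beta^2}{3}$ and $1-\tfrac{\beta^2}{3}=\tfrac{\alpha^2}{3}$. Hence the radicals and logarithms collapse: $\sqrt{1-\alpha^2/3}=|\beta|/\sqrt3=1/(\alpha\sqrt3)$ and $\ln(1-\alpha^2/3)=2\ln|\beta|-\ln3=-2\ln\alpha-\ln3$, with the symmetric statements at the other point. Substituting into $-\ln(1-4x)/(2\sqrt{1-4x})$ then produces $\tfrac{\alpha\sqrt3}{2}(2\ln\alpha+\ln3)$ for the $\alpha^t$-term and $\tfrac{\beta\sqrt3}{2}(2\ln\alpha-\ln3)$ for the $\beta^t$-term.

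Finally I would collect the two contributions. For the Lucas sum this gives $\frac{\sqrt3}{2}\big[2\ln\alpha\,(\alpha^{t+1}+\beta^{t+1})+\ln3\,(\alpha^{t+1}-\beta^{t+1})\big]$, and applying $\alpha^{t+1}+\beta^{t+1}=L_{t+1}$ and $\alpha^{t+1}-\beta^{t+1}=\sqrt5\,F_{t+1}$ yields the claimed $\frac{\sqrt3}{2}(2L_{t+1}\ln\alpha+\sqrt5F_{t+1}\ln3)$; the Fibonacci sum follows from the same two contributions with a sign change and the overall factor $1/\sqrt5$, after noting $\sqrt3/(2\sqrt5)=\sqrt{15}/10$. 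The gibonacci identity then requires no new analysis: either substitute the Binet-like formula \eqref{binet-gibo} for $G_{2n+t}$ into the same two evaluations, or simply invoke linearity, writing $G$ as a combination of $F$ and $L$ and using $F_{t+2}+F_t=L_{t+1}$ together with $L_{t+2}+L_t=5F_{t+1}$ to recognize $G_{t+2}+G_t$ in the answer. The only place demanding care is the branch of the square root at $x=\alpha^2/12$: because $\beta<0$ one must take $\sqrt{\beta^2}=|\beta|=-\beta$, and a sign slip there is the single most likely source of error.
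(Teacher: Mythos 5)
Your proof is correct and takes essentially the same route as the paper: this theorem is the case $r=2$ (so $4L_2=12$) of the paper's scheme of substituting $x=\alpha^r/(4L_r)$ and $x=\beta^r/(4L_r)$ into Chen's generating function \eqref{chen_id} and recombining via the Binet formulas, which is precisely your evaluation at $x=\alpha^2/12$ and $x=\beta^2/12$. The key simplifications you supply---$1-\alpha^2/3=\beta^2/3$, $1-\beta^2/3=\alpha^2/3$, and $\sqrt{\beta^2}=-\beta=1/\alpha$---are exactly the details the paper leaves implicit, and your treatment of the gibonacci case via $(b-a\beta)\alpha^{t+1}-(a\alpha-b)\beta^{t+1}=G_{t+2}+G_t$ is sound.
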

\begin{lemma}\label{lem.rsoebsn}
	If $r$ is an integer, then
	\begin{align*}
	\sqrt {\alpha ^r }  \pm \sqrt {\beta ^r }  &= \sqrt {L_r  \pm 2} ,\qquad\text{$r$ even},\\
	\sqrt {\alpha ^r }  \pm \sqrt { - \beta ^r }  &= \sqrt {\sqrt 5\,F_r   \pm 2},\qquad\text{$r$ odd} .
	\end{align*}
\end{lemma}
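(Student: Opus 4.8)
The plan is to prove both identities by squaring the left-hand sides and invoking the fundamental relation $\alpha\beta=-1$ together with the Binet formulas $L_r=\alpha^r+\beta^r$ and $\sqrt5\,F_r=\alpha^r-\beta^r$ from \eqref{bine}. The key observation is that squaring converts each sum of two square roots into a combination of $\alpha^r+\beta^r$ (or $\alpha^r-\beta^r$) and a cross term $2\sqrt{(\alpha\beta)^r}$, whose value is governed entirely by the parity of $r$ through $(\alpha\beta)^r=(-1)^r$. First I would record the sign behaviour that makes the radicals meaningful: for even $r$ we have $\beta^r=(\alpha\beta)^r/\alpha^r=1/\alpha^r>0$, so both $\sqrt{\alpha^r}$ and $\sqrt{\beta^r}$ are real; for odd $r$ we have $\alpha^r>0>\beta^r$, which is precisely why the statement is phrased with $\sqrt{-\beta^r}$ rather than $\sqrt{\beta^r}$.

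For the even case I would compute
\[
\big(\sqrt{\alpha^r}\pm\sqrt{\beta^r}\,\big)^2=\alpha^r+\beta^r\pm2\sqrt{(\alpha\beta)^r}=L_r\pm2,
\]
using $(\alpha\beta)^r=(-1)^r=1$ when $r$ is even, and then take nonnegative square roots. For the odd case the analogous computation reads
\[
\big(\sqrt{\alpha^r}\pm\sqrt{-\beta^r}\,\big)^2=\alpha^r-\beta^r\pm2\sqrt{-(\alpha\beta)^r}=\sqrt5\,F_r\pm2,
\]
since $-(\alpha\beta)^r=-(-1)^r=1$ when $r$ is odd. This is the entire algebraic content of the lemma; both displays collapse immediately once the parity of $(\alpha\beta)^r$ is fixed.

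The step requiring genuine care—and the one I expect to be the main obstacle—is the branch of the square root, i.e.\ verifying that the left-hand sides are nonnegative so that they equal the \emph{principal} square roots on the right, and that the radicands on the right are nonnegative. For even $r$ one checks $L_r\ge2$ (indeed $L_r=\alpha^r+\alpha^{-r}\ge2$ by AM--GM, with $L_{-r}=L_r$ covering negative subscripts), and for odd $r$ one uses $\sqrt5\,F_r=\alpha^r-\beta^r\ge\sqrt5>2$ together with $F_{-r}=F_r$ for odd $r$, so both radicands $L_r\pm2$ and $\sqrt5\,F_r\pm2$ are nonnegative. For the ``$+$'' identities the left-hand sides are manifestly nonnegative. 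For the ``$-$'' identities one should note that, writing $\sqrt{\alpha^r}-\sqrt{\beta^r}=\alpha^{r/2}-\alpha^{-r/2}$ in the even case (and similarly in the odd case), the difference is nonnegative exactly when $r\ge0$; I would therefore state the ``$-$'' versions for $r\ge0$, or equivalently record them up to an overall sign, so that the principal-root convention is respected.

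Taking the nonnegative square root of each squared identity, under the sign checks above, yields the four asserted equalities and completes the proof.
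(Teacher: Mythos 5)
Your proof is correct and takes essentially the same route as the paper, whose entire proof is the one-line remark that each identity "is readily established by squaring both sides." Your additional branch-of-square-root analysis — in particular the observation that the ``$-$'' identities, read with principal roots, actually require $r\ge 0$ (e.g.\ for $r=-2$ the left side is $-1$ while the right side is $+1$) — is a genuine refinement of a sign issue the paper silently glosses over.
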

\begin{proof}
	The veracity of each identity is readily established by squaring both sides.
\end{proof}
\begin{theorem}
	If $r$ is an even integer and $t$ is an integer, then
	\begin{align*}
	\sum_{n=0}^{\infty} \binom {2n}{n} \frac{O_nL_{rn+t}}{4^nL_r^n}
	&=\frac{\sqrt{L_r}}{2}\left( \sqrt{L_rL_{r+2t}+(-1)^t2}\,\ln L_r + {r} \sqrt{L_{r+2t}-(-1)^t2}\,\ln \alpha\right)\!,\\
	\sum_{n=0}^{\infty} \binom{2n}{n} \frac{O_nF_{rn+t}}{4^nL_r^n}
	&=\frac{\sqrt{5L_r}}{10}\left( \sqrt{L_{r+2t}-(-1)^t2} \,\ln L_r + {r} \sqrt{L_{r+2t}+(-1)^t2} \,\ln \alpha\right)\!.
	\end{align*}
\end{theorem}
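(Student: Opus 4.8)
The plan is to specialize Chen's generating function \eqref{main_id1} at the two arguments $x=\alpha^r/(4L_r)$ and $x=\beta^r/(4L_r)$ and to recombine the two evaluations through the Binet formulas \eqref{bine}. Writing $L_{rn+t}=\alpha^t(\alpha^r)^n+\beta^t(\beta^r)^n$ and $F_{rn+t}=\frac{1}{\alpha-\beta}\big(\alpha^t(\alpha^r)^n-\beta^t(\beta^r)^n\big)$, each target series splits as a linear combination of $\sum_n\binom{2n}{n}O_n\big(\alpha^r/(4L_r)\big)^n$ and $\sum_n\binom{2n}{n}O_n\big(\beta^r/(4L_r)\big)^n$, with coefficients $\alpha^t,+\beta^t$ in the Lucas case and $\frac{1}{\sqrt5}\alpha^t,-\frac{1}{\sqrt5}\beta^t$ in the Fibonacci case. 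So the whole problem reduces to evaluating \eqref{main_id1} at these two conjugate points.

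The decisive simplification is that, since $L_r=\alpha^r+\beta^r$, one has $1-4\cdot\frac{\alpha^r}{4L_r}=\frac{\beta^r}{L_r}$ and symmetrically $1-4\cdot\frac{\beta^r}{4L_r}=\frac{\alpha^r}{L_r}$. Because $r$ is even, $\beta^r=\alpha^{-r}>0$, so both quantities are positive and both arguments lie in $[0,\tfrac14)$; indeed $\frac{\alpha^r}{4L_r}=\frac{1}{4(1+\alpha^{-2r})}<\tfrac14$, which secures convergence. Feeding $1-4x=\beta^r/L_r$ into $-\ln(1-4x)/(2\sqrt{1-4x})$ and using $\sqrt{\beta^r}=\alpha^{-r/2}$ yields $\sum_n\binom{2n}{n}O_n\big(\alpha^r/(4L_r)\big)^n=\frac{\sqrt{L_r}\,\alpha^{r/2}}{2}\big(r\ln\alpha+\ln L_r\big)$, and the conjugate evaluation gives $\sum_n\binom{2n}{n}O_n\big(\beta^r/(4L_r)\big)^n=-\frac{\sqrt{L_r}\,\alpha^{-r/2}}{2}\big(r\ln\alpha-\ln L_r\big)$.

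Substituting these back and collecting the coefficients of $\ln L_r$ and of $r\ln\alpha$ leaves, in the Lucas case, the two combinations $\alpha^{t+r/2}\pm\beta^t\alpha^{-r/2}$ (and in the Fibonacci case the same two with roles swapped and a global factor $1/\sqrt5$, using $\frac{\sqrt{L_r}}{2\sqrt5}=\frac{\sqrt{5L_r}}{10}$). I would then rewrite these using $\beta^t\alpha^{-r/2}=(-1)^t\alpha^{-t-r/2}=(-1)^t\sqrt{\beta^{r+2t}}$, valid because $r$ is even, so that the combinations become $\sqrt{\alpha^{r+2t}}\pm(-1)^t\sqrt{\beta^{r+2t}}$. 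Applying Lemma \ref{lem.rsoebsn} with $r$ replaced by the even integer $r+2t$, after a short case split on the parity of $t$ to absorb the sign $(-1)^t$, converts each combination into $\sqrt{L_{r+2t}\pm(-1)^t2}$; attaching the prefactors $\frac{\sqrt{L_r}}{2}$ and $\frac{\sqrt{5L_r}}{10}$ then delivers the two stated identities.

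The main obstacle I expect is the disciplined handling of square-root branches. One must consistently use $\sqrt{\beta^r}=\alpha^{-r/2}$ (legitimate only because $r$ is even, so $\beta^r>0$), track the sign $(-1)^t$ when identifying $\beta^t\alpha^{-r/2}$ with $\sqrt{\beta^{r+2t}}$, and respect the orientation $\sqrt{\alpha^{r+2t}}\ge\sqrt{\beta^{r+2t}}$ that Lemma \ref{lem.rsoebsn} needs in order to return a nonnegative radical (this holds whenever $r+2t\ge0$; otherwise a compensating global sign must be carried). Everything remaining is routine bookkeeping of the logarithms together with the elementary relations $L_n^2=L_{2n}+2(-1)^n$ and $5F_n^2=L_{2n}-2(-1)^n$ that underlie Lemma \ref{lem.rsoebsn}.
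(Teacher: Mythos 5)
Your proposal is correct and follows essentially the same route as the paper's proof: evaluate \eqref{chen_id} at the conjugate points $x=\alpha^r/(4L_r)$ and $x=\beta^r/(4L_r)$ (so that $1-4x$ becomes $\beta^r/L_r$ and $\alpha^r/L_r$, both positive since $r$ is even), recombine via the Binet formulas \eqref{bine}, and finish with Lemma~\ref{lem.rsoebsn} applied to the even index $r+2t$. One remark: your computation yields $\sqrt{L_{r+2t}+(-1)^t2}$ as the coefficient of $\ln L_r$ in the Lucas identity, and this is indeed the correct value --- the printed radical $\sqrt{L_rL_{r+2t}+(-1)^t2}$ in the theorem contains an extraneous factor $L_r$ (a typo), as confirmed by the paper's own special case $r=2$, $t=0$, which gives $\sqrt{5}\,\ln 3$ rather than $\sqrt{11}\,\ln 3$.
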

\begin{proof}
	Set $x=\alpha^r/(4L_r)$ and $x=\beta^r/(4L_r)$, in turn, in \eqref{chen_id}. Multiply through the resulting identities by
	$\beta^t$ and $\alpha^t$, respectively. Combine according to the Binet formulas \eqref{bine} and make use of Lemma \ref{lem.rsoebsn}.
\end{proof}
In particular,
\begin{align*}
\sum_{n = 0}^\infty \binom{2n}{n} \frac{O_n L_{2n}}{12^n} 
&= \frac{{\sqrt 3 }}{2}\big( {\sqrt 5 \ln 3 + 2\ln\alpha } \big),\\
\sum_{n = 0}^\infty \binom{2n}{n} \frac{O_n F_{2n}}{12^n} 
&= \frac{\sqrt3}{10} \big( \sqrt5\ln 3 + 10 \ln\alpha\big),\\
\sum_{n = 1}^\infty \binom{2n}{n} \frac{O_n L_{2(n-1)}}{12^n} 
&= \frac{{\sqrt 3 }}{2}\big( {\sqrt 5 \ln 3 - 2\ln\alpha } \big),\\
\sum_{n = 1}^\infty \binom{2n}{n} \frac{O_n F_{2(n-1)}}{12^n} 
&=- \frac{\sqrt{3}}{10} \big(\sqrt5\ln 3 - 10\ln\alpha\big).
\end{align*}

\section{Several other generating functions and series}

\begin{theorem}
	If $p$ is a non-negative integer and $t$ is any integer, then
	\begin{equation}\label{eq.c5ncid9}
	\begin{split}
	\sum_{n = 0}^\infty & \binom{n + p}p\frac{{H_n G_{n + t} }}{{2^n }} = \big( {H_p  + \ln 2} \big)2^{p + 1}G_{t + 2p + 2}  \\
	&\quad+ \frac{{2^{p + 2} }}{{\sqrt 5 }}\left( {G_{t + 2p + 3}  + G_{t + 2p + 1} } \right)\ln\alpha - 2^{p + 1} \sum_{k = 1}^p {\frac{{G_{t + 2p - 2k + 2} }}{{2^k k}}}.
	\end{split}
	\end{equation}
	If $p$ is an even integer, then
	\begin{equation}\label{eq.qkd3rtj}
	\begin{split}
	&\sum_{n = 0}^\infty ( - 1)^n \binom{n + p}p\frac{H_n G_{n + t}}{2^n} \\
	&\qquad= \Big(\frac{2}{\sqrt 5} \Big)^{p +1}\left(\frac{H_p  - \ln\big( \sqrt 5/2 \big)}{{\sqrt5}}\left( G_{t - p}  + G_{t - p - 2}  \right) - G_{t - p - 1} \ln\alpha\right) \\
	&\qquad\quad - \sum_{k = 1}^{p/2} \frac{\big(\frac{4}{5}\big)^{p/2-k}}{5k}\left(G_{t + 2k - p}  +\frac{6k-1}{2k-1} G_{t + 2k - p - 2}\right)\!,
	\end{split}
	\end{equation}
	while if $p$ is an odd integer, then
	\begin{equation}\label{eq.qwfkq8r}
	\begin{split}
	&\sum_{n = 0}^\infty ( - 1)^n \binom{n + p}{p}\frac{H_n G_{n + t}}{2^n}  \\
	&\quad = \Big(\frac{2}{\sqrt 5} \Big)^{p+1}\Big( \big( H_p - \ln \big(\sqrt 5/2\big)\big) G_{t - p - 1} - \frac{\ln\alpha}{\sqrt 5} \big(G_{t - p} + G_{t - p - 2} \big)\Big)\\
	&\qquad - \frac{8}{25}\sum_{k = 1}^{\frac{p - 1}{2}} \frac{\big(\frac45\big)^{\frac{p-1}{2}-k}}{2k-1}  \left(\frac{14k-5}{4k} G_{t + 2k - p - 1} + G_{t + 2k - p - 3}\right)-\frac{2(G_{t}+G_{t-2})}{5p}.
	\end{split}
	\end{equation}
\end{theorem}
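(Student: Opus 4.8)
The plan is to reduce all three identities to the single generating function \eqref{eq.rqnwwad} by means of the Binet-like formula \eqref{binet-gibo}. Writing $G_{n+t}=\frac{(b-a\beta)\alpha^{n+t}+(a\alpha-b)\beta^{n+t}}{\alpha-\beta}$ and pulling $\alpha^{t},\beta^{t}$ outside the sum, the left-hand side of \eqref{eq.c5ncid9} becomes
\[
\frac{1}{\alpha-\beta}\left[(b-a\beta)\alpha^{t}\sum_{n=0}^\infty\binom{n+p}{p}H_n\Big(\frac{\alpha}{2}\Big)^{n}+(a\alpha-b)\beta^{t}\sum_{n=0}^\infty\binom{n+p}{p}H_n\Big(\frac{\beta}{2}\Big)^{n}\right],
\]
so the two inner sums are \eqref{eq.rqnwwad} evaluated at $x=\alpha/2$ and $x=\beta/2$; for the alternating identities one instead evaluates at $x=-\alpha/2$ and $x=-\beta/2$. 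Since $|\alpha/2|<1$ and $|\beta/2|<1$, all four arguments lie inside the disk of convergence.

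The computation hinges on four elementary golden-ratio collapses. Using $\alpha^2=\alpha+1$, $\beta^2=\beta+1$ and $\alpha^2+1=\sqrt5\,\alpha$, $\beta^2+1=-\sqrt5\,\beta$, one checks
\[
1-\frac{\alpha}{2}=\frac{\beta^2}{2},\quad 1-\frac{\beta}{2}=\frac{\alpha^2}{2},\quad 1+\frac{\alpha}{2}=\frac{\sqrt5\,\alpha}{2},\quad 1+\frac{\beta}{2}=-\frac{\sqrt5\,\beta}{2}.
\]
Substituting into \eqref{eq.rqnwwad} turns each $(1-x)^{-(p+1)}$ into $2^{p+1}$ times a pure power of $\alpha$ or $\beta$, turns $-\ln(1-x)$ into $\pm2\ln\alpha+\ln2$ (non-alternating) or $\mp\ln\alpha-\ln(\sqrt5/2)$ (alternating), and turns the finite sum $\sum_{k=1}^p(1-x)^k/k$ into a sum of powers of $\alpha,\beta$. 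Recombining the $\alpha$- and $\beta$-pieces then rests on the two Binet contractions
\[
(b-a\beta)\alpha^{j}+(a\alpha-b)\beta^{j}=\sqrt5\,G_j,\qquad (b-a\beta)\alpha^{j}-(a\alpha-b)\beta^{j}=G_{j+1}+G_{j-1},
\]
the second of which follows from $\alpha^{j+1}+\alpha^{j-1}=\sqrt5\,\alpha^{j}$ and $\beta^{j+1}+\beta^{j-1}=-\sqrt5\,\beta^{j}$. For \eqref{eq.c5ncid9} this is immediate: the constant part yields $2^{p+1}(H_p+\ln2)G_{t+2p+2}$, the logarithmic part yields $\frac{2^{p+2}}{\sqrt5}(G_{t+2p+3}+G_{t+2p+1})\ln\alpha$, and the index shift $\alpha^{t+2p+2}\beta^{2k}=\alpha^{t+2p+2-2k}$ (from $\alpha\beta=-1$) collapses the finite sum to $-2^{p+1}\sum_{k=1}^p G_{t+2p-2k+2}/(2^k k)$.

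For the alternating identities the new feature is the factor $(-1)^{p+1}$ produced by $1+\beta/2=-\sqrt5\,\beta/2$ raised to the power $p+1$; this sign decides whether the symmetric contraction $\sqrt5\,G_j$ or the antisymmetric one $G_{j+1}+G_{j-1}$ multiplies $H_p-\ln(\sqrt5/2)$ and $\ln\alpha$, and so it is exactly what forces the split into the even case \eqref{eq.qkd3rtj} and the odd case \eqref{eq.qwfkq8r}. The constant and logarithmic parts then fall out as before. I expect the genuine obstacle to be the finite-sum term: after substitution each summand carries a half-integer power $5^{k/2}$, and these reorganize into integer powers of $5$ only after splitting the index by parity ($k=2m$ versus $k=2m-1$), applying the appropriate contraction to each class, and re-indexing both classes against a common variable. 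Doing so merges consecutive contributions into the single coefficients $\frac{6k-1}{2k-1}$ (even $p$) and $\frac{14k-5}{4k}$ (odd $p$) via the partial-fraction identities $\frac1k+\frac4{2k-1}=\frac{6k-1}{k(2k-1)}$ and $\frac5{2k}+\frac2{2k-1}=\frac{14k-5}{2k(2k-1)}$, while in the odd case the top index $k=p$ remains unpaired and produces precisely the separate term $-2(G_t+G_{t-2})/(5p)$. Matching these reindexed coefficients against the stated ones is the only laborious step; everything else is bookkeeping driven by the four collapse identities and the two Binet contractions.
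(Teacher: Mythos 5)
Your proposal is correct and is essentially identical to the paper's own (much terser) proof: evaluate the generating function \eqref{eq.rqnwwad} at $x=\alpha/2,\ \beta/2$ for \eqref{eq.c5ncid9} and at $x=-\alpha/2,\ -\beta/2$ for the alternating cases, then recombine via the Binet formula \eqref{binet-gibo}, with the parity of $p$ (through the sign in $1+\beta/2=-\sqrt5\,\beta/2$ raised to the power $p+1$) forcing the split into \eqref{eq.qkd3rtj} and \eqref{eq.qwfkq8r}. The details you supply --- the four collapse identities, the two Binet contractions, and the parity-splitting of the finite sum with the partial-fraction mergers $\frac1k+\frac4{2k-1}=\frac{6k-1}{k(2k-1)}$ and $\frac5{2k}+\frac2{2k-1}=\frac{14k-5}{2k(2k-1)}$, including the unpaired top term $-2(G_t+G_{t-2})/(5p)$ in the odd case --- all check out against the stated coefficients.
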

\begin{proof}
	To prove \eqref{eq.c5ncid9}, set $x=\alpha/2$ and $x=\beta/2$, in turn, in \eqref{eq.rqnwwad} and combine according to the Binet formula \eqref{binet-gibo}. Using $x=-\alpha/2$ and $x=-\beta/2$, in turn, in \eqref{eq.rqnwwad} and combining in accordance with \eqref{binet-gibo} and the parity of $p$ produces \eqref{eq.qkd3rtj} and \eqref{eq.qwfkq8r}.
\end{proof}

Chen \cite[Theorem 6]{Chen} has shown the following generating function:
\begin{equation*}
\sum_{n=0}^{\infty}\binom{2n}{n} H_{2n}x^n = \frac{1}{\sqrt{1-4x}}\ln \Big(\frac{1+\sqrt{1-4x}}{2(1-4x)}\Big),\quad  |x|<\frac14.
\end{equation*}
The companion result involving odd-indexed harmonic numbers is the next result.
\begin{theorem}
	For all $|x|<1/4$, we have
	\begin{align}\label{thm_gf1}
	\sum_{n=0}^{\infty}	\binom{2n}{n} H_{2n+1} x^n& = \frac{1}{\sqrt{1-4x}}\ln \Big(\frac{1+\sqrt{1-4x}}{2(1-4x)}\Big)
	+\frac{\arcsin(2\sqrt{x})}{2\sqrt{x}},\\
	\sum_{n=0}^{\infty} (-1)^n \binom{2n}{n}H_{2n+1} x^n &=  \frac{1}{\sqrt{1+4x}}\ln \Big(\frac{1+\sqrt{1+4x}}{2(1+4x)}\Big) + \frac{\ln\big(2\sqrt{x}+\sqrt{1+4x}\big)}{2\sqrt{x}}.\nonumber
	\end{align}
\end{theorem}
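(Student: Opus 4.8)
The plan is to reduce everything to Chen's generating function for $\binom{2n}{n}H_{2n}$ (stated immediately above) plus one elementary auxiliary series. The starting point is the decomposition $H_{2n+1}=H_{2n}+\frac{1}{2n+1}$, which splits each target sum into two pieces:
$$\sum_{n=0}^\infty \binom{2n}{n}H_{2n+1}x^n = \sum_{n=0}^\infty \binom{2n}{n}H_{2n}x^n + \sum_{n=0}^\infty \binom{2n}{n}\frac{x^n}{2n+1},$$
and similarly with the factor $(-1)^n$ inserted throughout. The first piece on the right is exactly Chen's result, which already supplies the logarithmic term in each of the two claimed identities. Hence the whole theorem reduces to evaluating the auxiliary series $\sum_{n\geq 0}\binom{2n}{n}\frac{x^n}{2n+1}$ and its alternating companion.

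To evaluate $\sum_{n\geq0}\binom{2n}{n}\frac{x^n}{2n+1}$, I would set $x=u^2$ and write $\frac{x^n}{2n+1}=\frac1u\cdot\frac{u^{2n+1}}{2n+1}$. The function $g(u)=\sum_{n\geq0}\binom{2n}{n}\frac{u^{2n+1}}{2n+1}$ then satisfies $g'(u)=\sum_{n\geq0}\binom{2n}{n}u^{2n}=\frac{1}{\sqrt{1-4u^2}}$ by Lehmer's central binomial generating function, valid for $|u|<\tfrac12$, i.e. for $|x|<\tfrac14$. Since $g(0)=0$, integrating gives $g(u)=\int_0^u (1-4s^2)^{-1/2}\,ds=\tfrac12\arcsin(2u)$, and dividing by $u=\sqrt x$ produces the extra term $\frac{\arcsin(2\sqrt x)}{2\sqrt x}$ of the first identity. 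For the alternating series I would insert $(-1)^n$, so that the derivative becomes $\sum_{n\geq0}(-1)^n\binom{2n}{n}u^{2n}=\frac{1}{\sqrt{1+4u^2}}$; integrating now yields $\tfrac12\operatorname{arcsinh}(2u)=\tfrac12\ln\bigl(2u+\sqrt{1+4u^2}\bigr)$, which after division by $\sqrt x$ gives the term $\frac{\ln(2\sqrt x+\sqrt{1+4x})}{2\sqrt x}$ of the second identity. Equivalently, one may obtain the alternating case from the first by the formal replacement $x\mapsto -x$ together with $\arcsin(iy)=i\operatorname{arcsinh}(y)$.

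There is essentially no deep obstacle here; the work is bookkeeping. The only points requiring a little care are justifying term-by-term differentiation and integration of the power series (legitimate within the region $|x|<\tfrac14$, where all series converge absolutely), and the correct identification of the antiderivative of $(1\pm4s^2)^{-1/2}$ together with the appropriate branch of $\arcsin$ and of the logarithm on the interval in question. Once the auxiliary series is identified, adding Chen's generating function term for $\binom{2n}{n}H_{2n}$ (with argument $x$ and with argument $-x$, respectively) completes both identities directly.
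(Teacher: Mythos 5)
Your proof is correct, but it follows a genuinely different decomposition from the paper's. The paper starts from the odd-index relation \eqref{eq.lreuv6s}, writing $H_{2n+1}=\tfrac12 H_n+O_{n+1}$ and then $O_{n+1}=O_n+\tfrac{1}{2n+1}$; its inputs are therefore Boyadzhiev's generating function \eqref{eq.royfy2s} for $\binom{2n}{n}H_n$, the paper's own identity \eqref{main_id1} for $\binom{2n}{n}O_n$, and the same $\arcsin$/$\ln\bigl(2\sqrt{x}+\sqrt{1+4x}\,\bigr)$ auxiliary series you use; the claimed logarithmic term only emerges after recombining the two separate logarithms coming from \eqref{eq.royfy2s} and \eqref{main_id1}. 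You instead split $H_{2n+1}=H_{2n}+\tfrac{1}{2n+1}$ and invoke Chen's Theorem 6 for $\binom{2n}{n}H_{2n}$ (evaluated at $x$ and at $-x$) as a black box, so the logarithmic term of the statement appears verbatim and the entire remaining work is the auxiliary series, which you prove cleanly by substituting $x=u^2$ and integrating Lehmer's series $\sum\binom{2n}{n}u^{2n}=(1-4u^2)^{-1/2}$. Since $H_{2n}=\tfrac12 H_n+O_n$, the two decompositions carry the same content; the trade-off is that your route is shorter and proves the auxiliary series rather than merely citing it, while the paper's route stays within results it establishes itself (notably \eqref{main_id1}) together with Boyadzhiev's formula, instead of leaning on Chen's $H_{2n}$ theorem, which the paper only quotes. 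Your handling of the alternating case (replacing $x\mapsto -x$ and identifying $\tfrac12\operatorname{arcsinh}(2u)=\tfrac12\ln\bigl(2u+\sqrt{1+4u^2}\,\bigr)$) is also sound for $0<x<\tfrac14$, with the usual power-series interpretation covering negative $x$.
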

\begin{proof}
	Using \eqref{eq.lreuv6s}, we have
	\begin{equation*}
	\sum_{n = 0}^\infty \binom{2n}{n} H_{2n + 1} x^n = \frac{1}{2}\sum_{n = 0}^\infty \binom {2n}{n} H_n x^n
	+ \sum_{n = 0}^\infty \binom{2n}{n} O_{n + 1} x^n.
	\end{equation*}
	Substituting \eqref{eq.royfy2s} and \eqref{main_id1}, and noting that
	$O_{n + 1} = O_n + \frac{1}{2n + 1}$, as well as
	\begin{align*}
	\sum_{n = 0}^\infty \binom{2n}{n} \frac{x^n}{2n + 1} &= \frac{{\arcsin(2\sqrt x )}}{{2\sqrt x }},\\
	\sum_{n = 0}^\infty(- 1)^{n} \binom{2n}{n}  \frac{x^n}{2n + 1} &= \frac{{\ln \big(2\sqrt x  + \sqrt {1 + 4x} \big)}}{{2\sqrt x }},
	\end{align*}
	the results follow.
\end{proof}
\begin{corollary}
	For all $|x|<1/4$, we have
	\begin{align*}
	\sum_{n=0}^{\infty}	C_n H_{2n+1} x^n &= \frac{1}{2x}\Big(\ln 2 -\big(1+\sqrt{1-4x}\big)\ln \big(1+\sqrt{1-4x}\big)  \nonumber \\
	& \quad+ \sqrt{1-4x}\ln\big(2(1-4x)\big) + 2\sqrt{x} \arcsin(2\sqrt{x}) \Big ).
	\end{align*}
\end{corollary}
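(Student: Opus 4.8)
The plan is to reduce the claim to the generating function \eqref{thm_gf1} for $\binom{2n}{n}H_{2n+1}$, which is already in hand. Writing $f(x)$ for the right-hand side of \eqref{thm_gf1} and using $C_n=\frac{1}{n+1}\binom{2n}{n}$, the extra factor $1/(n+1)$ is manufactured by integration: for a power series $\sum_{n\geq0}a_n x^n$ one has $\sum_{n\geq0}\frac{a_n}{n+1}x^n=\frac{1}{x}\int_0^x\sum_{n\geq0}a_n t^n\,dt$ inside the disc of convergence, the term-by-term integration being justified by uniform convergence on compact subsets. Hence the first step is simply to record
\[
\sum_{n=0}^\infty C_n H_{2n+1}x^n=\frac{1}{x}\int_0^x f(t)\,dt,
\]
and everything then reduces to evaluating this single integral in closed form.

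Next I would split $f$ into its two summands from \eqref{thm_gf1} and integrate each separately by an explicit substitution. For the logarithmic part the substitution $u=\sqrt{1-4t}$ (so that $dt=-\tfrac{u}{2}\,du$ and $\sqrt{1-4t}=u$) collapses the $1/\sqrt{1-4t}$ factor and turns
\[
\int_0^x\frac{1}{\sqrt{1-4t}}\ln\!\Big(\frac{1+\sqrt{1-4t}}{2(1-4t)}\Big)dt
\]
into $\tfrac12\int_{\sqrt{1-4x}}^1\big(\ln(1+u)-\ln2-2\ln u\big)\,du$, which is elementary via $\int\ln(1+u)\,du=(1+u)\ln(1+u)-u$ and $\int\ln u\,du=u\ln u-u$. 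For the inverse-sine part the substitution $v=2\sqrt t$ (so that $dt=\tfrac{v}{2}\,dv$) gives
\[
\int_0^x\frac{\arcsin(2\sqrt t)}{2\sqrt t}\,dt=\frac12\int_0^{2\sqrt x}\arcsin v\,dv,
\]
again elementary via $\int\arcsin v\,dv=v\arcsin v+\sqrt{1-v^2}$.

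Finally I would collect the two evaluated pieces and simplify, writing $w=\sqrt{1-4x}$ throughout. The decisive point is that the non-logarithmic boundary contributions — a pure constant together with a term linear in $w$ — appear with opposite signs in the two integrals and cancel exactly. What remains is
\[
\frac12\Big(\ln2-(1+w)\ln(1+w)+w\ln2+w\ln(1-4x)+2\sqrt x\,\arcsin(2\sqrt x)\Big),
\]
and recognizing $w\ln2+w\ln(1-4x)=w\ln\big(2(1-4x)\big)$ and dividing by $x$ produces exactly the stated formula. The main obstacle is purely the bookkeeping in this last step: tracking the several logarithmic and linear terms across the two substitutions and confirming that every stray constant cancels so that the clean closed form survives.
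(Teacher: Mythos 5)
Your proposal is correct and follows essentially the same route as the paper: both obtain the factor $\frac{1}{n+1}$ by integrating the generating function \eqref{thm_gf1} term by term and then splitting the integral into the logarithmic and arcsine pieces. The only difference is that the paper disposes of the logarithmic piece by citing Chen's Corollary 7 and quotes the elementary antiderivative of $\frac{\arcsin(2\sqrt{x})}{2\sqrt{x}}$, whereas you evaluate both integrals explicitly via the substitutions $u=\sqrt{1-4t}$ and $v=2\sqrt{t}$, which makes the argument self-contained; your computation, including the cancellation of the boundary terms $\frac{1-w}{2}$ and $\frac{w-1}{2}$, checks out exactly.
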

\begin{proof}
	Integrate \eqref{thm_gf1} with respect to $x$ while using   \cite[Corollary 7]{Chen} and the elementary integral
	$	\int \frac{\arcsin(2\sqrt{x})}{2\sqrt{x}}\,dx = \frac{1}{2}\sqrt{1-4x} + \sqrt{x} \arcsin(2\sqrt{x})$.
\end{proof}

Chen \cite[Corollary 9]{Chen} has also shown that for all $|x|<1/4$, 
\begin{equation}\label{chen_cor}
\sum_{n=0}^{\infty} C_{n} O_{n} x^n = \frac{1}{2x}\Big (1 - \sqrt{1-4x} + \sqrt{1-4x}\, \ln\sqrt{1-4x} \,\Big).
\end{equation}
His result can be generalized as follows.
\begin{theorem}
	For all integers $m\geq 0$ and all $x$ with $|x|<1/4$, we have
	\begin{align*}
	\sum_{n=0}^\infty \binom{2n}{n} \frac{O_{n}}{n+m+1} x^n &= \frac{\sqrt{1-4x}}{2x}\big (\ln\sqrt{1-4x}-1\big) + \frac{0^m}{2x^{m+1}}\\
	& \quad + \frac{m}{4^m x^{m+1}}\sum_{j=0}^{m-1} (-1)^{j}\binom {m-1}{j}  A_j(x)
	\end{align*}
	with
	$A_j(x) = \frac{(1-4x)^{j+3/2}}{2j+3}\big(\ln\sqrt{1-4x} - \frac{2j+4}{2j+3}\big) + \frac{2j+4}{(2j+3)^2}$.
\end{theorem}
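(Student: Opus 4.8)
The plan is to reduce everything to the already-established generating function \eqref{main_id1}, namely $f(x) := \sum_{n=0}^\infty \binom{2n}{n}O_n x^n = -\ln\sqrt{1-4x}\,/\sqrt{1-4x}$, by exploiting the factor $1/(n+m+1)$ through integration. Writing $S(x)$ for the sum to be computed, one observes that $\frac{d}{dx}\big[x^{m+1}S(x)\big] = \sum_{n\ge 0}\binom{2n}{n}O_n x^{n+m} = x^m f(x)$, and since $x^{m+1}S(x)$ vanishes at $x=0$ (as $m\ge 0$), this gives $S(x) = x^{-(m+1)}\int_0^x u^m f(u)\,du$. The interchange of summation and integration is justified inside the disk $|x|<1/4$. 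This is the structural backbone; everything that follows is the explicit evaluation of this single integral.

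First I would record an antiderivative of $f$. A short undetermined-coefficients computation gives $F(u) := \int f(u)\,du = \tfrac{1}{2}\sqrt{1-4u}\,(\ln\sqrt{1-4u}-1)$, with $F(0)=-\tfrac12$; one checks $F'=f$ directly. Integrating by parts with $U=u^m$, $dV=f(u)\,du$ then yields
$$\int_0^x u^m f(u)\,du = x^m F(x) + \tfrac{0^m}{2} - m\int_0^x u^{m-1}F(u)\,du,$$
where the boundary contribution $-0^m F(0)=\tfrac{0^m}{2}$ is exactly the source of the isolated $0^m/(2x^{m+1})$ term (active only when $m=0$, recovering Chen's \eqref{chen_cor}). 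Dividing by $x^{m+1}$ produces the leading closed-form piece $\frac{\sqrt{1-4x}}{2x}(\ln\sqrt{1-4x}-1)$ from $F(x)/x$, and leaves the task of evaluating $K_m(x):=\int_0^x u^{m-1}\sqrt{1-4u}\,(1-\ln\sqrt{1-4u})\,du$, with the remaining prefactor combining to $m/(2x^{m+1})$.

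For $K_m(x)$ I would substitute $w=\sqrt{1-4u}$ (so $u=(1-w^2)/4$, $du=-\tfrac{w}{2}\,dw$), which turns it into $\frac{1}{2\cdot 4^{m-1}}\int_{\sqrt{1-4x}}^{1}(1-w^2)^{m-1}w^2(1-\ln w)\,dw$. Expanding $(1-w^2)^{m-1}=\sum_{j=0}^{m-1}(-1)^j\binom{m-1}{j}w^{2j}$ reduces matters to the elementary integrals $\int_{\sqrt{1-4x}}^{1}w^{2j+2}(1-\ln w)\,dw$; using $\int w^{2j+2}\ln w\,dw=\frac{w^{2j+3}}{2j+3}\ln w-\frac{w^{2j+3}}{(2j+3)^2}$ and evaluating at the endpoints gives precisely $A_j(x)$ (here $w^{2j+3}=(1-4x)^{j+3/2}$ and $\ln w=\ln\sqrt{1-4x}$). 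Assembling the factors $m/(2x^{m+1})$ and $1/(2\cdot 4^{m-1})$ into $m/(4^m x^{m+1})$ completes the identity.

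I do not expect a genuinely hard step; the computation is elementary once the integral representation is set up. The main points requiring care are tracking the constant of integration in $F$ (it is what produces both the $\ln\sqrt{1-4x}-1$ leading term and, via the boundary value $F(0)$, the $0^m$ term that unifies the $m=0$ and $m\ge 1$ cases), choosing to reduce the binomial exponent to $m-1$ by integration by parts (rather than expanding $(1-w^2)^m$ directly) so that the answer emerges in the stated $\binom{m-1}{j}$ form, and the bookkeeping of signs and limits through the $w$-substitution.
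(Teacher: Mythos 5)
Your proposal is correct and follows essentially the same route as the paper: both integrate $x^m$ times the generating function \eqref{main_id1} term by term to produce the factor $1/(n+m+1)$, integrate by parts (with the boundary term at $0$ yielding the $0^m$ contribution), and then evaluate the remaining integral by binomial expansion and elementary antiderivatives. The only cosmetic difference is your substitution $w=\sqrt{1-4u}$ in place of the paper's $z=1-4u$, and your combining of the paper's two integrals $I_1$, $I_2$ into the single integral $K_m$.
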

\begin{proof}
	We start with
	\begin{equation*}
	\sum\limits_{n=0}^\infty \binom {2n}{n} O_n x^{n+m} = - x^m \frac{\ln(1-4x)}{2\sqrt{1-4x}},\quad m\geq 0.
	\end{equation*}
	Integration by parts produces
	\begin{align*}
	\sum_{n=0}^\infty \binom{2n}{n} \frac{O_{n}y^{n+m+1}}{n+m+1}  = \frac{y^m\sqrt{1-4y}}{2}\big (\ln\sqrt{1-4y}-1\big) + \frac{0^m}{2} - \frac{m}{2}(I_1(y) - I_2(y)),
	\end{align*}
	with
	\begin{equation*}
	I_1(y) = \int\limits_0^y x^{m-1}\,\sqrt{1-4x}\ln\sqrt{1-4x}\,dx,\quad
	I_2(y) = \int\limits_0^y x^{m-1}\,\sqrt{1-4x}\,dx.
	\end{equation*}
	Both integrals are elementary. We have ($z=1-4x$)
	\begin{align*}
	I_2(y) &= -\Big (\frac{1}{4}\Big )^{m} \sum_{j=0}^{m-1}(-1)^j \binom {m-1}{j}  \int_1^{1-4y} z^j \sqrt{z}\,dz \\
	&= 2^{1-2m} \sum_{j=0}^{m-1}(-1)^{j} \binom {m-1}{j}  \frac{1- (1-4y)^{j+3/2}}{2j+3}
	\end{align*}
	and
	\begin{align*}
	I_1(y) &= 2^{-1-2m} \sum_{j=0}^{m-1} (-1)^{j+1}\binom {m-1}{j}  \int_1^{1-4y} z^j \sqrt{z} \ln z \,dz \\
	&= 2^{1-2m} \sum_{j=0}^{m-1}(-1)^{j+1} \binom {m-1}{j}  \Big ( \frac{(1-4y)^{j+3/2}}{2j+3}  \ln\sqrt{1-4y} \\
	& \quad - \frac{(1-4y)^{j+3/2} }{(2j+3)^2} + \frac{1}{(2j+3)^2} \Big ),
	\end{align*}
	as
	\begin{equation*}
	\int z^n \sqrt{z} \ln z \,dz = \frac{2z^{n+3/2}}{2n+3}  \ln z - \frac{4z^{n+3/2}}{(2n+3)^2} .
	\end{equation*}
	The result follows upon simplifying and changing the variable $y$ to $x$.
\end{proof}

When $m=0$ we get \eqref{chen_cor}. When $m=1$ then we get the following generating function.
\begin{corollary}
	For all $x$ with $|x|<1/4$, we have
	\begin{align*}
	\sum_{n=0}^\infty \binom{2n}{n} \frac{O_{n}}{n+2} x^n = \frac{\sqrt{1-4x}}{2x} \big (\ln\sqrt{1-4x}-1 \big) + \frac{(1-4x)^{3/2}}{36x^2}\big (3\ln\sqrt{1-4x} - {4}\big) + \frac{1}{9x^2}.
	\end{align*}
\end{corollary}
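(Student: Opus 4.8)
The plan is to recognize that this corollary is precisely the case $m=1$ of the generating function established in the preceding theorem, so the entire task reduces to specializing and simplifying that formula. First I would set $m=1$ throughout the general expression. The term $\frac{0^m}{2x^{m+1}}$ then becomes $\frac{0}{2x^2}=0$ and drops out, the prefactor $\frac{m}{4^m x^{m+1}}$ collapses to $\frac{1}{4x^2}$, and the summation $\sum_{j=0}^{m-1}$ reduces to the single term $j=0$, contributing only $(-1)^0\binom{0}{0}A_0(x)=A_0(x)$.

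Next I would compute $A_0(x)$ explicitly from the given definition, obtaining $A_0(x)=\frac{(1-4x)^{3/2}}{3}\big(\ln\sqrt{1-4x}-\frac43\big)+\frac49$. Multiplying by $\frac{1}{4x^2}$ yields $\frac{(1-4x)^{3/2}}{12x^2}\big(\ln\sqrt{1-4x}-\frac43\big)+\frac{1}{9x^2}$; here the constant $\frac49\cdot\frac{1}{4x^2}=\frac{1}{9x^2}$ should be kept aside. Distributing the factor $\frac{1}{12x^2}$ across the bracket then produces the logarithmic piece $\frac{(1-4x)^{3/2}}{12x^2}\ln\sqrt{1-4x}$ together with the term $-\frac43\cdot\frac{(1-4x)^{3/2}}{12x^2}=-\frac{(1-4x)^{3/2}}{9x^2}$. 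Rewriting both of these over the common denominator $36x^2$ consolidates them into $\frac{(1-4x)^{3/2}}{36x^2}\big(3\ln\sqrt{1-4x}-4\big)$.

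Finally I would reattach the first summand $\frac{\sqrt{1-4x}}{2x}\big(\ln\sqrt{1-4x}-1\big)$, which is independent of $m$ and carries over unchanged, to arrive at the stated closed form. Since every step is a direct substitution or an elementary algebraic consolidation, there is no genuine obstacle; the only point requiring a little care is tracking the free constant $\frac{1}{9x^2}$ separately from the logarithm-bearing term and verifying that the factor $-\frac43$ inside $A_0(x)$ yields exactly the integer $-4$ after the common-denominator step.
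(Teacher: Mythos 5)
Your proposal is correct and matches the paper's own route: the paper presents this corollary simply as the case $m=1$ of the preceding theorem, and your specialization (dropping the $0^m$ term, reducing the sum to $A_0(x)=\frac{(1-4x)^{3/2}}{3}\bigl(\ln\sqrt{1-4x}-\frac43\bigr)+\frac49$, multiplying by $\frac{1}{4x^2}$, and consolidating over the denominator $36x^2$) is exactly the required computation, carried out without error.
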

In particular,
\begin{align*}
\sum_{n=0}^\infty \binom{2n}{n} \frac{O_{n}}{(n+2)8^n} &= \frac{1}{9} \Big ( 64 - 34\sqrt{2} - 15\sqrt{2}\ln 2 \Big ),\\
\sum_{n=0}^\infty (-1)^n \binom{2n}{n} \frac{O_{n}}{(n+2)8^n}  &= \frac{1}{9} \Big(64-30\sqrt{6}+9\sqrt{6}\ln\Big(\frac{3}{2}\Big)\Big ),
\end{align*}
and
\begin{align*}
\sum_{n=0}^\infty (-1)^n \binom{2n}{n} \frac{O_{n}L_{n+t}}{(n+2)8^n}  &= \frac{64}{9}L_{t-2}-\sqrt{10}\Big(F_{n-2}+\frac23F_{t-5}\Big)\ln2
\\
&\quad-2\sqrt{2}\Big(L_{n-2}+\frac23L_{t-5}\Big)\ln\alpha-2\sqrt{10}\Big(F_{n-2}+\frac89F_{t-5}\Big),\\
\sum_{n=0}^\infty (-1)^n \binom{2n}{n} \frac{O_{n}F_{n+t}}{(n+2)8^n}  &= \frac{64}{9}F_{t-2} -\frac{\sqrt{10}}{5}\Big(L_{n-2}+\frac23L_{t-5}\Big)\ln2
\\
&\quad-2\sqrt{2}\Big(F_{n-2}+\frac23F_{t-5}\Big)\ln\alpha-\frac{2\sqrt{10}}{5}\Big(L_{n-2}+\frac89L_{t-5}\Big).
\end{align*}
\begin{theorem}
	For all $|x|<1/4$, we have
	\begin{equation}\label{O2n_id}
	\sum_{n=0}^\infty \binom {4n}{2n} O_{2n} x^{2n} = - \frac{\sqrt{1+4x}\ln(1-4x)+\sqrt{1-4x}\ln(1+4x)}{4\sqrt{1-16x^2}}
	\end{equation}
	and
	\begin{equation}
	\label{O2n_id2}
	\sum_{n=0}^\infty \binom {4n+2}{2n+1} O_{2n+1} x^{2n+1} = -  \frac{\sqrt{1+4x}\ln(1-4x)-\sqrt{1-4x}\ln(1+4x)}{4\sqrt{1-16x^2}}.
	\end{equation}
\end{theorem}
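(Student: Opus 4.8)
The plan is to obtain both identities by a single even--odd bisection of the generating function already established in \eqref{main_id1}. Write $f(x) = \sum_{n=0}^\infty \binom{2n}{n} O_n x^n$, so that \eqref{main_id1} reads $f(x) = -\frac{\ln(1-4x)}{2\sqrt{1-4x}}$, while \eqref{main_id2} is nothing but $f(-x) = -\frac{\ln(1+4x)}{2\sqrt{1+4x}}$ (alternatively, one may replace $x$ by $-x$ in \eqref{main_id1} directly). The coefficient of $x^m$ in $f$ is $\binom{2m}{m} O_m$; evaluating at $m = 2n$ gives $\binom{4n}{2n} O_{2n}$, and at $m = 2n+1$ gives $\binom{4n+2}{2n+1} O_{2n+1}$, which are exactly the coefficients appearing on the left-hand sides of \eqref{O2n_id} and \eqref{O2n_id2}.

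First I would invoke the standard bisection formulas for an ordinary generating function: the even part equals $\tfrac12(f(x)+f(-x))$ and the odd part equals $\tfrac12(f(x)-f(-x))$. Substituting the closed forms gives
\begin{align*}
\sum_{n=0}^\infty \binom{4n}{2n} O_{2n} x^{2n} &= \frac{f(x)+f(-x)}{2} = -\frac14\left(\frac{\ln(1-4x)}{\sqrt{1-4x}} + \frac{\ln(1+4x)}{\sqrt{1+4x}}\right)\!,\\
\sum_{n=0}^\infty \binom{4n+2}{2n+1} O_{2n+1} x^{2n+1} &= \frac{f(x)-f(-x)}{2} = -\frac14\left(\frac{\ln(1-4x)}{\sqrt{1-4x}} - \frac{\ln(1+4x)}{\sqrt{1+4x}}\right)\!.
\end{align*}

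Next I would clear the two differing square-root denominators over their common value $\sqrt{1-4x}\,\sqrt{1+4x} = \sqrt{1-16x^2}$, rationalizing each summand as $\frac{\ln(1-4x)}{\sqrt{1-4x}} = \frac{\sqrt{1+4x}\,\ln(1-4x)}{\sqrt{1-16x^2}}$ and analogously for the other term. Collecting numerators then yields precisely the right-hand sides of \eqref{O2n_id} and \eqref{O2n_id2}. Concerning convergence, \eqref{main_id1} is valid on $[-\tfrac14,\tfrac14)$ and \eqref{main_id2} on $(-\tfrac14,\tfrac14]$, so both $f(x)$ and $f(-x)$ are simultaneously available exactly when $|x|<\tfrac14$, which is the stated range. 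There is essentially no serious obstacle beyond bookkeeping: the only point demanding genuine care is matching the bisected index $m=2n$ (respectively $m=2n+1$) against the binomial coefficients $\binom{4n}{2n}$ and $\binom{4n+2}{2n+1}$ in the claimed sums, and the elementary rationalization of the denominators.
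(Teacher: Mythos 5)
Your proposal is correct and follows exactly the paper's own argument: the paper's proof is precisely the bisection $\tfrac12(f(x)+f(-x))$ and $\tfrac12(f(x)-f(-x))$ applied to the generating function \eqref{main_id1}, followed by the same rationalization over $\sqrt{1-16x^2}$. You have merely written out the bookkeeping (coefficient matching and common denominator) that the paper leaves implicit.
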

\begin{proof} 	Calculate
	$\frac{1}{2} (f(x) + f(-x))$ and $\frac{1}{2} (f(x) - f(-x))$
	with $f(x)$ being the generating function \eqref{main_id1}.
\end{proof}
\begin{theorem}\label{th17}
	If $t$ is any nonnegative integer, then
	\begin{align*}
	&\sum_{n=1}^\infty \binom {4n}{2n} \frac{F_{2n+t}}{64^n} O_{2n} \\
	&= \frac{\sqrt{2}}{20} \begin{cases}
	\begin{aligned}
	& \left (10F_{t+1} -  \sqrt{\sqrt{5}}\, \sqrt{\sqrt{5}F_{2t-1}+2} \right)\ln\alpha \\
	& +\left (\sqrt5 L_{t+1}+\sqrt{\sqrt5}\, \sqrt{\sqrt{5}F_{2t-1}-2} \right)\ln2 
	-\frac{\sqrt{\sqrt5}}{2}\sqrt{\sqrt{5}F_{2t-1}-2}\,\ln5;
	\end{aligned} & \text{$t$ even}; \\[22pt]
	\begin{aligned}
	& \left (10F_{t+1} -  \sqrt{\sqrt{5}}\, \sqrt{\sqrt{5}F_{2t-1}-2} \right)\ln\alpha \\
	& +\left (\sqrt5 L_{t+1}+\sqrt{\sqrt5}\, \sqrt{\sqrt{5}F_{2t-1}+2} \right)\ln2 
	-\frac{\sqrt{\sqrt5}}{2}\sqrt{\sqrt{5}F_{2t-1}+2}\,\ln5.
	\end{aligned} & \text{$t$ odd}; 
	\end{cases}
	\end{align*}
	\begin{align*}
	&\sum_{n=1}^\infty \binom {4n}{2n} \frac{L_{2n+t}}{64^n} O_{2n} \\
	&=\frac{\sqrt{2}}{4} \begin{cases}
	\begin{aligned}
	& \left (2L_{t+1} -  \frac{1}{\sqrt{\sqrt{5}}}\, \sqrt{\sqrt{5}F_{2t-1}-2} \right)\ln\alpha \\
	& +\left (\sqrt5 F_{t+1}+\frac{1}{\sqrt{\sqrt5}}\, \sqrt{\sqrt{5}F_{2t-1}+2} \right)\ln2 
	-\frac{1}{2\sqrt{\sqrt5}}\sqrt{\sqrt{5}F_{2t-1}+2}\,\ln5;
	\end{aligned} & \text{$t$ even}; \\[22pt]
	\begin{aligned}
	& \left (2L_{t+1} -  \frac{1}{\sqrt{\sqrt{5}}}\, \sqrt{\sqrt{5}F_{2t-1}+2} \right)\ln\alpha \\
	& +\left (\sqrt5 F_{t+1}+\frac{1}{\sqrt{\sqrt5}}\, \sqrt{\sqrt{5}F_{2t-1}-2} \right)\ln2 
	-\frac{1}{2\sqrt{\sqrt5}}\sqrt{\sqrt{5}F_{2t-1}-2}\,\ln5. 
	\end{aligned} & \text{$t$ odd}. 
	\end{cases}
	\end{align*}
\end{theorem}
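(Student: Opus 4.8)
The plan is to extract both target sums from the even-index generating function \eqref{O2n_id} by specializing the variable to Fibonacci-type arguments and then recombining through the Binet formulas \eqref{bine}. Write $S(x)=\sum_{n=0}^\infty \binom{4n}{2n}O_{2n}x^{2n}$ for the right-hand side of \eqref{O2n_id}; the $n=0$ term drops out since $O_0=0$. The factor $F_{2n+t}/64^n$ together with $F_{2n+t}=(\alpha^{2n+t}-\beta^{2n+t})/\sqrt5$ suggests evaluating at $x=\alpha/8$ and $x=\beta/8$, because then $x^{2n}=\alpha^{2n}/64^n$ and $x^{2n}=\beta^{2n}/64^n$ respectively, and both points lie inside the disc of convergence ($|\alpha/8|,|\beta/8|<1/4$). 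Consequently,
\begin{align*}
\sum_{n=1}^\infty \binom{4n}{2n}\frac{F_{2n+t}}{64^n}O_{2n} &= \frac{\alpha^t S(\alpha/8)-\beta^t S(\beta/8)}{\sqrt5},\\
\sum_{n=1}^\infty \binom{4n}{2n}\frac{L_{2n+t}}{64^n}O_{2n} &= \alpha^t S(\alpha/8)+\beta^t S(\beta/8),
\end{align*}
so everything reduces to evaluating $S(\alpha/8)$ and $S(\beta/8)$ explicitly.

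The evaluation of these two values is the heart of the matter, and it hinges on the arguments $1\pm 4x$ collapsing to clean forms. At $x=\alpha/8$ one has $4x=\alpha/2$, and using $\alpha^2=\alpha+1$, $\beta^2=\beta+1$, $\alpha\beta=-1$ one checks $1+4x=\sqrt5\,\alpha/2$ and $1-4x=\beta^2/2$; symmetrically, at $x=\beta/8$ one gets $1+4x=-\sqrt5\,\beta/2$ and $1-4x=\alpha^2/2$. Hence each $\sqrt{1\pm 4x}$ becomes $5^{1/4}$ or a constant times a power $\alpha^{\pm1/2}$, the product $\sqrt{1-16x^2}=\sqrt{(1-4x)(1+4x)}$ simplifies via $\alpha\beta=-1$, and each logarithm $\ln(1\pm 4x)$ splits into an integer combination of $\ln\alpha$, $\ln2$, $\ln5$ (for instance $\ln(1-4x)=-2\ln\alpha-\ln2$ at $x=\alpha/8$). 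Substituting into \eqref{O2n_id} and rationalizing presents $S(\alpha/8)$ and $S(\beta/8)$ as linear combinations of $\ln\alpha$, $\ln2$, $\ln5$ with coefficients that are half-integer powers of $\alpha$.

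I would then form the two combinations above and group by the three logarithms. The coefficients separate into two types. Those carrying an \emph{integer} power of $\alpha$ recombine via $\alpha^{m}+(-1)^{m}\alpha^{-m}=L_m$ and $\alpha^{m}-(-1)^{m}\alpha^{-m}=\sqrt5\,F_m$ into $L_{t+1}$ or $\sqrt5\,F_{t+1}$. Those carrying a \emph{half-integer} power have the shape $\sqrt{\alpha^{2t-1}}\pm\sqrt{-\beta^{2t-1}}$, after using $-\beta^{2t-1}=\alpha^{-(2t-1)}$ to identify $\alpha^{1/2-t}=\sqrt{-\beta^{2t-1}}$, with the overall sign governed by $(-1)^t$. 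This is exactly where Lemma \ref{lem.rsoebsn} enters: with the odd index $r=2t-1$ it turns $\sqrt{\alpha^{2t-1}}\pm\sqrt{-\beta^{2t-1}}$ into $\sqrt{\sqrt5\,F_{2t-1}\pm2}$, and the factor $(-1)^t$ selects the $+$ or $-$ alternative. This sign selection is precisely the source of the case distinction on the parity of $t$ in the statement.

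The main obstacle is purely the bookkeeping of this simplification: one must carry the half-integer powers correctly, rationalize the two different denominators $5^{1/4}\alpha^{\pm1/2}$ coming from $S(\alpha/8)$ and $S(\beta/8)$, and keep the parity-dependent signs aligned so that the even- and odd-$t$ branches emerge with the stated radicands $\sqrt5\,F_{2t-1}\pm2$. No idea beyond \eqref{O2n_id}, the Binet formulas \eqref{bine}, and Lemma \ref{lem.rsoebsn} is required; a final numerical check at $t=0$ and $t=1$ confirms the constants $\tfrac{\sqrt2}{20}$ and $\tfrac{\sqrt2}{4}$ and the placement of the $\pm2$ in each branch.
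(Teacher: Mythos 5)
Your proposal is correct and follows essentially the same route as the paper: substitute $x=\alpha/8$ and $x=\beta/8$ into \eqref{O2n_id} (where $1+4x$ and $1-4x$ collapse to $\sqrt5\,\alpha/2$, $\beta^2/2$ and $-\sqrt5\,\beta/2$, $\alpha^2/2$), weight by $\alpha^t$, $\beta^t$, combine via the Binet formulas \eqref{bine}, and resolve the half-integer powers $\sqrt{\alpha^{2t-1}}\pm\sqrt{-\beta^{2t-1}}$ through Lemma \ref{lem.rsoebsn}, with the $(-1)^t$ from $\beta^t$ producing the parity cases. This matches the paper's proof step for step, including the identification of the radicands $\sqrt{\sqrt5\,F_{2t-1}\pm2}$.
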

\begin{proof}
	Insert $x=\alpha/8$ and $x=\beta/8$ in \eqref{O2n_id}, respectively, and multiply through by $\alpha^t$ (resp. $\beta^t$). This yields
	\begin{equation*}
	\sum_{n=1}^\infty \binom {4n}{2n} O_{2n} \frac{\alpha^{2n+t}}{64^n} =
	- \frac{\sqrt2}{4\sqrt{\sqrt5}} \left ( \sqrt{\sqrt{5}}\, \alpha^{t+1} \ln \big(\beta^2/2\big) + \sqrt{\alpha}\,\alpha^{t-1}\ln\big(\sqrt{5}\alpha/2\big)\right )
	\end{equation*}
	and
	\begin{equation*}
	\sum_{n=1}^\infty \binom {4n}{2n} O_{2n} \frac{\beta^{2n+t}}{64^n} =
	\frac{\sqrt2}{4\sqrt{\sqrt{5}\alpha}} \left (\sqrt{\sqrt5} \beta^{t+1} \ln \big(\alpha^2/2\big) + \beta^{t-2}\ln\big(\sqrt{5}/(2\alpha)\big)\right )\!.
	\end{equation*}
	When combining according to the Binet formulas \eqref{bine} use the algebraic relations  
	\begin{equation*}
	\sqrt{\alpha} \alpha^{t} \pm \sqrt{-\beta}\beta^{t} =
	\begin{cases}
	\sqrt{\sqrt{5}F_{2t+1} \pm  2}, & \text{if $t$ is even}; \\
	\sqrt{\sqrt{5}F_{2t+1} \mp 2}, & \text{if $t$ is odd,} 
	\end{cases}
	\end{equation*}
	applicable to nonnegative integer $t$.
\end{proof}
In particular, Theorem \ref{th17} yields
\begin{align*}
\sum_{n=1}^\infty &\binom {4n}{2n} \frac{F_{2n+1}}{64^n} O_{2n} \\
&=\frac{\sqrt2}{20}
\left( \Big(10-\sqrt{5-2\sqrt{5}}\Big) \ln\alpha+\Big(3\sqrt{5}+\sqrt{5+2\sqrt{5}}\Big) \ln2 -\frac12\sqrt{5+2\sqrt{5}}\,\ln5\right)
\end{align*}
and
\begin{align*}
\sum_{n=1}^\infty &\binom {4n}{2n} \frac{L_{2n+1}}{64^n} O_{2n} \\
&=\frac{\sqrt{10}}{20}
\left( \Big(6\sqrt5-\sqrt{5+2\sqrt{5}}\Big)\ln\alpha+\Big(5+\sqrt{5-2\sqrt{5}}\Big) \ln2 -\frac12\sqrt{5-2\sqrt{5}}\,\ln5\right)\!.
\end{align*}

A similar result can be derived from \eqref{O2n_id2} which is left to the interested reader.

\section{A family of related integrals}

From Lemma \ref{fund_lem} we also get the next result where odd harmonic numbers are involved.
\begin{theorem}
	For each integers $m\ge 1$ and $n\geq 0$, we have
	\begin{align*}
	\int_0^\infty & \frac{\sum\limits_{j=0}^{n+1} \binom {n+1}{j}  L_{4m(n+1-j)}x^{2j}}{(x^4+L_{4m}x^2+1)^{n+1}}\,\ln x\,dx\\
	&\qquad\qquad= - \binom {2n}{n} \frac{\pi}{2^{2n+1}} \Big ( 2m \sqrt{5}  F_{2m(2n+1)} \ln \alpha+ O_n L_{2m(2n+1)} \Big )
	\end{align*}
and
	\begin{align*}
	\int_0^\infty  &\frac{\sum\limits_{j=0}^{n+1} \binom {n+1}{j}  F_{4m(n+1-j)}x^{2j}}{(x^4+L_{4m}x^2+1)^{n+1}}\ln x\,dx\\
	&\qquad\qquad= - \binom {2n}{n} \frac{\pi}{2^{2n+1}} \Big ( \frac{2m}{\sqrt{5}}  L_{2m(2n+1)}\ln \alpha + O_n F_{2m(2n+1)} \Big ).
	\end{align*}
\end{theorem}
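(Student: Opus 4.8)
The plan is to factor the quartic in each denominator and recognize the numerators as binomial expansions, thereby collapsing each integral into two clean applications of Lemma~\ref{fund_lem}. First I would use the identity $L_{4m}^2-4=5F_{4m}^2$ together with $\alpha^{4m}\beta^{4m}=1$ and $\alpha^{4m}+\beta^{4m}=L_{4m}$ to factor
\begin{equation*}
x^4+L_{4m}x^2+1=(x^2+\alpha^{4m})(x^2+\beta^{4m})=(x^2+a^2)(x^2+b^2),
\end{equation*}
where $a=\alpha^{2m}>0$ and $b=\beta^{2m}=\alpha^{-2m}>0$ (both genuinely positive because $2m$ is even). Then, by the binomial theorem and the Binet formulas \eqref{bine}, the Lucas numerator is
\begin{equation*}
\sum_{j=0}^{n+1}\binom{n+1}{j}L_{4m(n+1-j)}x^{2j}=(x^2+a^2)^{n+1}+(x^2+b^2)^{n+1},
\end{equation*}
and the Fibonacci numerator is likewise $\tfrac{1}{\sqrt5}\big((x^2+a^2)^{n+1}-(x^2+b^2)^{n+1}\big)$.

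Next I would divide by $\big[(x^2+a^2)(x^2+b^2)\big]^{n+1}$, which cancels the bulk of each factor: the first integrand reduces to $\big((x^2+a^2)^{-(n+1)}+(x^2+b^2)^{-(n+1)}\big)\ln x$ and the second to $\tfrac{1}{\sqrt5}\big((x^2+b^2)^{-(n+1)}-(x^2+a^2)^{-(n+1)}\big)\ln x$. Each of the four resulting integrals is exactly of the shape evaluated in Lemma~\ref{fund_lem}, so applying \eqref{lem_eq} once with $a=\alpha^{2m}$ and once with $a=\beta^{2m}$ produces closed forms in terms of $\ln\alpha^{2m}$, $\ln\beta^{2m}$, and powers of $\alpha,\beta$.

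The final step is the simplification. Writing $N=2m(2n+1)$, I would substitute $\ln a=2m\ln\alpha$, $\ln b=-2m\ln\alpha$, together with the reciprocal-power relations $a^{-(2n+1)}=\alpha^{-N}=\beta^{N}$ and $b^{-(2n+1)}=\beta^{-N}=\alpha^{N}$, all of which hold because $N$ is even. Collecting terms and invoking $\alpha^N+\beta^N=L_N$ and $\alpha^N-\beta^N=\sqrt5\,F_N$ then yields precisely the two stated right-hand sides. I expect the only real obstacle to be the bookkeeping of golden-ratio arithmetic: one must treat $b=\beta^{2m}$ as a positive real with $\ln b=-2m\ln\alpha$ (rather than a naive $2m\ln\beta$ with $\beta<0$), and the sign flips $\alpha^{-N}=\beta^N$, $\beta^{-N}=\alpha^N$ rely crucially on the parity of $N$. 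A careless sign in either place would interchange the roles of $F$ and $L$ or flip the overall sign, so that is where I would verify the computation most carefully.
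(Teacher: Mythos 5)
Your proposal is correct and is essentially the paper's own proof: the paper likewise applies Lemma~\ref{fund_lem} with $a=\alpha^{2m}$ and $a=\beta^{2m}$ (using $\beta^{2m}=\alpha^{-2m}>0$) and combines via the Binet formulas, merely running the argument in the synthetic direction (building the quartic integrand from the two lemma instances) where you run it analytically (factoring the quartic and expanding the numerator by the binomial theorem). Your attention to the parity of $2m(2n+1)$ and to $\ln\beta^{2m}=-2m\ln\alpha$ fills in exactly the simplifications the paper leaves implicit.
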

\begin{proof}
	Set $a=\alpha^{2m}$ and $a=\beta^{2m}$ in \eqref{lem_eq} and combine according to the Binet formulas \eqref{bine}. When simplifying
	make use of $\beta^{2m}=\alpha^{-2m}$.
\end{proof}

In particular, for $m\geq1$,
\begin{align*}
\int_0^\infty \frac{2x^2 + L_{4m}}{x^4+L_{4m}x^2+1}\ln x\,dx &= - m \sqrt{5} \pi F_{2m}\ln\alpha,\\
\int_0^\infty \frac{\ln x}{x^4+L_{4m}x^2+1}\,dx &= - \frac{m \pi}{\sqrt{5}\,F_{2m}} \ln\alpha,
\end{align*}
from which in view of the fact $L_{4m}-5F^2_{2m}=2$ we have
\begin{equation*}
\int_0^\infty \frac{x^2}{x^4+L_{4m}x^2+1}\ln x\,dx = - \frac{ m\pi}{\sqrt{5}F_{2m}} \ln\alpha.
\end{equation*}

\section{Concluding comments}

This work was inspired by the papers by Boyadzhiev  \cite{Boyadzhiev} and Chen \cite{Chen}.
Our first significant complement is a different proof of Chen's identity \eqref{chen_id} using a special family of log-integrals.
Based on this identity we derived several interesting combinatorial sum identities involving Catalan numbers, harmonic numbers,
and odd harmonic numbers. Proceeding further we offered generating functions for some related series and evaluated
them for specific Fibonacci-coefficients in closed form. Finally, we have discussed a family of related integrals
involving Fibonacci numbers and odd harmonic numbers.

We close this paper mentioning that more corollaries and further results can be obtained by writing \eqref{main_id1} as
\begin{equation*}
\sum_{n = 0}^\infty \binom{2n}{n} \frac{O_n}{4^n} x^n = -\frac{ \ln\sqrt{1-x}}{\sqrt{1-x}},\quad |x| < 1,
\end{equation*}
or
\begin{equation*}
\sum_{n = 0}^\infty \binom{2n}{n} \frac{O_n}{4^n} (1 - x^2)^n = -\frac{\ln x}{x},\quad |1 - x^2| < 1.
\end{equation*}
We immediately have the trigonometric versions
\begin{align*}
\sum_{n = 1}^\infty \binom{2n}{n} O_n \frac{{\cos ^n x}}{4^n}& =- \frac{\ln \big(\sqrt 2\, | {\sin (x/2)}|\big)}{\sqrt 2 \left| {\sin (x/2)} \right|} ,\\
\sum_{n = 1}^\infty (- 1)^{n-1} \binom{2n}{n} O_n \frac{\cos ^n x}{4^n} &= \frac{\ln \big(\sqrt 2\, |\cos (x/2)|\big)}{\sqrt 2| \cos (x/2)|},\\
\sum_{n = 0}^\infty \binom{2n}{n} O_n \frac{{\cos ^{2n} x}}{{4^n }} &= -\frac{\ln \left| {\sin x} \right| }{{\left| {\sin x} \right|}},
\end{align*}
and
\begin{align*}
\sum_{n = 0}^\infty \binom{2n}{n} O_n \frac{{\sin ^{2n} x}}{{4^n }} = - \frac{\ln \left| {\cos x} \right| }{{\left| {\cos x} \right|}}.
\end{align*}

By utilizing suitable values of $x$ from these formulas, we can derive several series that involve odd harmonic numbers. Here are some examples:
\begin{align*}
\sum_{n = 1}^\infty \binom{2n}{n} \frac{O_n}{8^n} &=\frac{\sqrt2}{2}\ln2,\\
\sum_{n = 1}^\infty \binom{2n}{n} \Big(\frac{3}{16}\Big)^n O_n&=2\ln2,\\
\sum_{n = 1}^\infty \binom{2n}{n} \Big(\frac{\sqrt5}{16\alpha}\Big)^n O_n &=-\frac{2}{\alpha}\ln\Big(\frac{\alpha}{2}\Big),\\
\sum_{n = 1}^\infty \binom{2n}{n} \Big(\frac{\sqrt5\alpha}{16}\Big)^n O_n  &={2}{\alpha}\ln(2{\alpha}).
\end{align*}
\newpage

\end{document}